\newtheorem{theorem}{Theorem}
\newtheorem{proposition}{Proposition}
\newtheorem{lemma}{Lemma}
\newtheorem{corollary}{Corollary}
\newtheorem{definition}{Definition}
\newtheorem{example}{Example}
\newtheorem{remark}{Remark}
\newtheorem{algorithm}{Algorithm}
\def\g{\boldsymbol}
\def\ci{\perp\!\!\!\perp}
\def\perpe{\perp_e}
\DeclareMathOperator{\argmin}{arg\,min}
\DeclareMathOperator{\diag}{diag}
\DeclareMathOperator{\ph}{ph}
\DeclareMathOperator{\AIC}{AIC}
\DeclareMathOperator{\MST}{mst}
\DeclareMathOperator{\cens}{cens}
\newcommand{\einsfun}{\g 1} 
\title{Graphical Models for Extremes}
\author[1]{Sebastian Engelke}
\author[2]{Adrien S.~Hitz}
\affil[1]{Research Center for Statistics, University of Geneva, Boulevard du Pont d’Arve 40, 1205 Geneva, Switzerland.}
\affil[2]{Department of Statistics, University of Oxford, 24-29 St Giles', Oxford OX1 3LB, UK and 
  Materialize.X, Enterprise Lab, Imperial College London, London SW7 2AZ, UK.}
\date{}
\begin{document}
\maketitle

\begin{abstract}
    Conditional independence, graphical models and sparsity are key notions for parsimonious statistical models and for understanding the structural relationships in the data. The theory of multivariate and spatial extremes describes the risk of rare events through asymptotically justified limit models such as max-stable and multivariate Pareto distributions. Statistical modelling in this field has been limited to moderate dimensions so far, partly owing to complicated likelihoods and a lack of understanding of the underlying probabilistic structures. We introduce a general theory of conditional independence for multivariate Pareto distributions that allows the definition of graphical models and sparsity for extremes. A Hammersley--Clifford theorem links this new notion to the factorization of densities of extreme value models on graphs. For the popular class of H\"usler--Reiss distributions we show that, similarly to the Gaussian case, the sparsity pattern of a general extremal graphical model can be read off from suitable inverse covariance matrices. New parametric models can be built in a modular way and statistical inference can be simplified to lower-dimensional marginals. We discuss learning of minimum spanning trees and model selection for extremal graph structures, and illustrate their use with an application to flood risk assessment on the Danube river.
\end{abstract}
{\bf Keywords: }{Extreme value theory; Conditional independence; Multivariate Pareto distribution; Graphical models; Sparsity}

\section{Introduction}\label{sec:intro}

Evaluation of the risk related to heat waves, extreme flooding, financial crises, or other
rare events requires the quantification of their small occurrence probabilities.
Empirical estimates are unreliable since the regions of interest are in the
tail of the distribution and typically contain few or no data points.
Extreme value theory provides the theoretical foundation for extrapolations
to the distributional tail of a $d$-dimensional random vector $\g X$.
The univariate case $d=1$ is well-studied and the generalized extreme value and Pareto distributions are widely applied in areas such as hydrology \citep{kat2002}, climate science \citep{min2011} and finance \citep{mcn2015}; see also \cite{emb1997} and \cite{ber2004}.

In the multivariate setting, $d\geq 2$, the result of the extrapolation strongly depends on the strength of extremal dependence between the components of $\g X$. 
Most current statistical models assume multivariate regular variation for $\g X$ \citep{res2008} since
this entails mathematically elegant descriptions of the asymptotic tail distribution. Similar to the univariate setting, two different but closely related approaches exist.
Max-stable distributions arise as limits of normalised maxima of independent copies of $\g X$
and have been extensively studied and applied in multivariate and spatial risk problems \citep[cf.,][]{deh1984, seg2010, dav2012b}.
On the other hand, multivariate Pareto distributions describe the random vector $\g X$
conditioned on the event that at least one component exceeds a high threshold; see \cite{roo2006}, \cite{roo2018} and \cite{kir2018} for their construction, stability properties and statistical inference.

A drawback of the current multivariate models is their limitation to rather moderate dimensions $d$, and the construction of tractable parametric models in higher dimensions is challenging, both for max-stable and multivariate Pareto distributions.
Sparse multivariate models require the notion of conditional independence \citep{daw1979}, which is not easy to define for tail distributions. In fact, \cite{papastathopoulos2016conditional} show that if $(Z_1, Z_2, Z_3)$ is a max-stable random vector with positive continuous density, then the conditional independence of $Z_1 \ci Z_3 \mid Z_2$ already implies the independence $Z_1\ci Z_3$; see also \cite{dom2014}. Meaningful conditional independence structures can thus only be obtained for max-stable distributions with discrete spectral measure \citep{gis2018}. Since these models do not admit densities, this excludes most of the currently used parametric families.

In this work we take the perspective of threshold exceedances and introduce a new notion of
conditional independence for a multivariate Pareto distribution $\g Y = (Y_1,\dots, Y_d)$,
which we denote by $\perpe$ to stress that it is designed for extremes.
It is different from classical conditional independence since the support of $\g Y$
is not a product space, but the homogeneity property of $\g Y$ can be used to show that it is well-defined.
Conditional independence is tightly linked to graphical models. For an undirected graph $\mathcal G = (V,E)$
with nodes $V=\{1,\dots, d\}$ and edge set $E$, we say that $\g Y$ is an extremal graphical model if it satisfies the pairwise Markov property
\begin{align}\label{egm}
  Y_i \perpe Y_j \mid \g Y_{\setminus \{i,j\}}, \quad  (i,j)\notin E.
\end{align}
The main advantage of conditional independence and graphical models is that they imply a simple
probabilistic structure and possibly sparse patterns in multivariate random vectors \citep{Lauritzen, wainwright2008graphical}.
For extremal graphical models on decomposable graphs, we prove a Hammersley--Clifford type theorem stating that~\eqref{egm} is equivalent to the factorization of the density $f_{\g Y}$ of $\g Y$
into lower-dimensional marginals. This underlines that our notion of conditional independence is in fact natural for multivariate Pareto distributions.

Applications of this result are manifold. From a probabilistic perspective, we analyse
models in the literature regarding their graphical properties in the sense of our definition~\eqref{egm}.
Extremal graphical models whose underlying graph is a tree have a particularly
simple multiplicative stochastic representation in terms of extremal functions, a notion
that is known from the simulation of max-stable processes \citep{dom2016}.
In multivariate extremes, one may argue that the family of H\"usler--Reiss distributions \citep{Husler1989} takes a
similar role as Gaussian distributions in the non-extreme world.
Instead of covariance matrices, they are parameterized by a variogram matrix $\Gamma$.
We show that the extremal graphical structure of a H\"usler--Reiss distribution can be identified by zero patterns on matrices derived from $\Gamma$.

Extremal graphical models enable the construction of parsimonious models for multivariate Pareto
distributions $\g Y$, which further enjoy the advantage of interpretability in terms of
the underlying graph. Thanks to the factorization of the densities, statistical inference
can be efficiently carried out on lower-dimensional marginals. For decomposable graphs with singleton separator sets, so-called block graphs, this allows the use of multivariate Pareto models in
fairly high dimensions. In many cases the underlying graphical structure is unknown and has to be learned from data.  
We discuss how a maximum likelihood tree can be obtained using standard algorithms by
\cite{kruskal1956shortest} or \cite{pri1957},
and how the best model can be selected among different extremal graphical models.

There is previous work on the construction of parsimonious extreme value models. A large body of literature
studies spatial max-stable random fields \citep{sch2002, kab2009, opi2013}. Such models have small parameter dimension but they rely on strong assumptions on stationarity and cannot be applied to multivariate, non-spatial data without information on an underlying space.
Other approaches include constructions through factor copulas \citep{lee2018},
ensembles of trees combining bivariate copulas \citep{yu2017}, graphical models
for large censored observations \citep{Hitz2015} and eigendecompositions \citep{coo2018}.
Closely related to our concept of conditional independence is the work of
\cite{coles1991modelling} and \cite{smi1997} who propose a Markov chain model where all bivariate marginals are extreme value distributions. This can be seen as a special case of our approach when the graph has the simple structure of a chain. Similar limiting objects also arise as the tail chains in the theory of extremes of stationary Markov chains with regularly varying marginals \citep{smi1992,bas2009,jan2014}. This theory has recently been extended to regularly varying Markov trees \citep{seg2019}. 
\cite{gis2018} and \cite{gis2018a} study the causal structure of
directed acyclic graphs for max-linear models, and they develop methods for
model identification based on tail dependence coefficients. Their work is in some sense
complementary to ours, since their models do not possess densities whereas we will
explicitly assume the existence of densities.

To the best of our knowledge, our work is the first principled attempt to define conditional
independence for general multivariate extreme value models that naturally extends to the
factorization of densities, sparsity and graphical models.
Section \ref{sec:background} introduces background on extreme value theory and graphical models needed throughout the paper. The new notion of conditional independence is defined in
Section \ref{sec:CI} and equivalent properties are derived. Section~\ref{graph_models_extremes} contains the main
probabilistic results on extremal graphical models, the representation of trees and
the characterization for H\"usler--Reiss distributions.
Statistical models on block graphs and their estimation, simulation and model selection are discussed in Section \ref{sec:stat}. In these graphical
models the dependence is modeled directly between lower-dimensional subsets of variables, whereas
the global dependence is implicitly implied by the conditional independence structure of the graph.
There are many potential applications of extremal graphical models. In Section~\ref{sec:appl}, we illustrate
the advantages of this structured approach compared to classical extreme value models
on a data set related to flooding on a river network in the upper Danube basin \citep[cf.,][]{asadi2015extremes}. The interpretation of the graphical structures
obtained in this application is particularly interesting since there is a seemingly natural
underlying tree associated to the flow-connections. Our conditional independence is formulated for multivariate Pareto distributions, but
the results in this paper have implications for max-stable distributions. This point and further research directions will be addressed in the discussion in Section \ref{sed:disc}.
The Appendix contains proofs and some additional results.

An implementation for R \citep{R2019} is available in the package \texttt{graphicalExtremes} \citep{graphicalExtremes}.
The code for the simulation study and application can be found in the supplementary material. 

\section{Background}
\label{sec:background}
\subsection{Notation}
We introduce some standard notation that is used throughout the paper. Symbols in boldface such as $\g x \in \mathbb R^d$ are column vectors with components denoted by $x_i$, $i\in \{1,\dots, d\}$, and operations and relations involving such vectors are meant componentwise. The vectors $\g 0=(0,\dots,0)$ and $\g 1=(1,\dots,1)$ are used as generic vectors with suitable dimension.
Denoting the index set by $V = \{1,\dots, d\}$, for a non-empty subset $I \subset V$,
we write for the subvectors $\g x_I = (x_i)_{i\in I}$ and $\g x_{\setminus I} = (x_i)_{i\in V\setminus I}$.
Similar notation is used for random vectors $\g X = (X_i)_{i\in V}$ with values in $\mathbb R^d$. For a matrix $A = (A_{ij})_{i,j\in V}\in \mathbb R^{d\times d}$ with entries indexed by $V$, and subsets $I,J\subset V$ we let
$A_{IJ} = (A_{ij})_{i\in I, j\in J}$ denote the $|I|\times |J|$ submatrix of $A$, and we abbreviate $A_I = A_{II}$. For $\g a, \g b\in \mathbb R^d$ with $\g a \leq \g b$, a multivariate interval is denoted by $[\g a,\g b] = [a_1,b_1] \times \dots \times [a_d,b_d]$. The $\ell_p$-norm of a vector $\g x\in \mathbb R^d$ for $p\geq 1$ is $\|\g x\|_p = \left(\sum_{i \in V} |x_i|^p\right)^{1/p}$, and its $\ell_\infty$-norm
is $\|\g x\|_\infty = \max_{i \in V} |x_i|$. The density of a random vector $\g X$, if it exists,
is denoted by $f_{\g X}$. The density of the marginal $\g X_I$ for a non-empty $I\subset V$ is denoted by~$f_I$, if there is no ambiguity regarding the random vector.

\subsection{Multivariate extreme value theory}
\label{mevd}

The tail behavior of the random vector $\g X = (X_1,\dots, X_d)$ can be described through two different approaches, 
one based on componentwise maxima and the other one on threshold exceedances. We briefly discuss both approaches and the close link between them.

Let $\g X_i = (X_{i1},\dots, X_{id})$, $i=1,\dots, n$, be independent copies of $\g X$ 
and denote the componentwise maximum by $\g M_n = (M_{1n}, \dots, M_{dn} )= (\max_{i=1}^n X_{i1}, \dots, \max_{i=1}^n X_{id})$.
Under mild conditions on the marginal distribution of $X_j$ there exist sequences of normalizing constants $b_{jn}\in\mathbb R$, $a_{jn} > 0$, $j=1,\dots,d$, such that
\begin{align}
  \label{gevd}
  \lim_{n\to \infty} \mathbb P\left( \frac{ M_{jn} -b_{jn}} {a_{jn}} \leq x \right) = G_j(x) =
  \exp\left\{ - \left(1 + \xi_j x \right)_+^{-1/\xi_j}\right\},\quad x\in\mathbb R,
\end{align}
where $z_+ = \max(z,0)$, and $G_j$ is the generalized extreme value distribution whose shape parameter $\xi_j\in \mathbb R$ determines the heaviness of the tail of $X_j$; see \cite{deh2006a,emb1997} and \cite{ber2004} for details.
For analysis of the dependence structure, the marginal distributions $F_j$ of $X_j$ are typically estimated first to normalise the data by $1 / \{1- F_j(X_j)\}$ to standard Pareto distributions.
For simplicity, we assume in the sequel that the $F_j$ are known and the vector $\g X$ has been normalised to standard Pareto marginals. Joint estimation of marginals and dependence is discussed in Section \ref{estimation}.

The standardized vector $\g X$ is said to be in the max-domain of attraction of the random vector $\g Z =(Z_1,\dots, Z_d)$ if for any $\g z = (z_1,\dots, z_d)$
\begin{align}
  \label{mrv}
  \lim_{n\to \infty} \mathbb P\left\{ \max_{i=1,\dots, n} X_{i1} \leq n z_1, \dots,\max_{i=1,\dots, n} X_{id} \leq n z_d  \right\} = \mathbb P(\g Z \leq \g z).
\end{align} 
In this case, $\g Z$ is max-stable with standard Fr\'echet marginals $\mathbb P( Z_j \leq z ) = \exp(-1 / z)$, $z\geq 0$, and we may write 
\begin{align}\label{max_cdf}
  \mathbb P(\g Z\leq \g z) = \exp\left\{ - \Lambda\left(\g z\right) \right\}, \qquad \g z \in \mathcal E,
\end{align} 
where the exponent measure $\Lambda$ is a Radon measure on the cone $\mathcal E = [0,\infty)^d\setminus \{\g 0\}$, and $\Lambda\left(\g z\right)$ is shorthand for $\Lambda\left(\mathcal E \setminus [\g 0,\g z] \right)$.
If $\Lambda$ is absolutely continuous with respect to Lebesgue measure on $\mathcal E$, its Radon--Nikodym derivative, denoted by $\lambda$, has the following properties:
\begin{itemize}
  \item[(L1)]
    homogeneity of order $-(d+1)$, i.e., $\lambda(t\g y) = t^{-(d+1)} \lambda(\g y)$ for any $t>0$ and $\g y \in\mathcal E$;
  \item[(L2)]
    normalised marginals, i.e., for any $i=1,\dots, d$, 
    $$\int_{\g y\in \mathcal E: y_i > 1} \lambda(\g y) \mathrm d \g y = 1.$$
\end{itemize}
The two properties follow from the max-stability and the standard Fr\'echet marginals of $\g Z$, respectively. For a non-empty subset $I\subset \{1,\dots, d\}$, we define the marginal of $\lambda$ by
\begin{align}\label{lambda_margin}
\lambda_I(\g y_I) =  \int_{[0,\infty)^{d-|I|}} \lambda(\g y) \mathrm d \g y_{\setminus I},
\end{align}
and note that it is homogeneous of order $-(|I|+1)$. In particular, if $I=\{i\}$ for some
$i=1,\dots, d$, then $\lambda_{\{i\}}(y_i) = 1/y_i^2$ as a consequence of (L1) and (L2).
Conversely, any positive and continuous function $\lambda$ satisfying (L1) and (L2) defines a valid density of
an exponent measure $\Lambda(\g z)$ by integration over $\mathcal E \setminus [\g 0,\g z]$, $\g z\in \mathcal E$,
that satisfies similar homogeneity and normalization properties as $\lambda$. By~\eqref{max_cdf} this also defines a max-stable distribution.

Another perspective on multivariate extremes is through threshold exceedances.
By Proposition 5.17 in \cite{res2008}, the convergence in~\eqref{mrv} is equivalent to
\begin{align*}
  \lim_{u\to \infty} u \{1 -  \mathbb P( \g X \leq  u \g z)\} = \Lambda(\g z), \quad \g z \in \mathcal E.
\end{align*} 
Consequently, the multivariate distribution of the threshold exceedances of $\g X$ satisfies 
\begin{align}
  \label{mpd}
  \mathbb P(\g Y \leq \g z) &= \lim_{u\to \infty} \mathbb P\left(\g X / u \leq   \g z \mid \| \g X\|_\infty > u\right)
   =  \frac{\Lambda(\g z\wedge \g 1) - \Lambda(\g z )}{\Lambda(\g 1)}, \quad \g z \in \mathcal E.
\end{align} 
The distribution of the limiting random vector $\g Y$ is called a multivariate Pareto distribution \citep[cf.,][]{roo2006}. It is defined through the exponent measure $\Lambda$ of the max-stable distribution $\g Z$, with support on the $L$-shaped space 
$\mathcal L = \{\g x \in \mathcal E: \|\g x\|_\infty > 1\}$. We say that $\g Z$ and $\g Y$ are
associated, since their distributions mutually determine each other.

Multivariate Pareto distributions are the only possible limits in~\eqref{mpd} and, owing to the homogeneity of the exponent measure, they enjoy certain stability properties \citep[cf.,][]{roo2018}. The exponent measure $\Lambda$, and hence the distribution of $\g Y$, may place mass on some lower-dimensional faces of the space $\mathcal E$. For the remainder of this paper we exclude this case to avoid technical difficulties. 
We further assume that the distribution of $\g Y$ admits a positive and continuous density $f_{\g Y}$ on $\mathcal L$, which is 
\begin{align*}
  f_{\g Y}(\g y) = \frac{\partial^d}{\partial y_1 \dots \partial y_d} \mathbb P(\g Y \leq \g y) 
  =  
  \frac{\lambda(\g y)}{\Lambda(\g 1)}, \quad \g y \in \mathcal L ,
\end{align*} 
since $\Lambda(\g y \wedge \g 1)$ is always constant along at least one coordinate for $\g y \in \mathcal L$. The density $f_{\g Y}$ is thus proportional to the density $\lambda$ of the exponent measure $\Lambda$. By the homogeneity of $\lambda$, $f_{\g Y}$ is also homogeneous of order $-(d+1)$.
The normalization constant $\Lambda(\g 1)\in [1,d]$ is known as the $d$-variate extremal coefficient \citep[cf.,][]{sch2003}. The assumption of a positive and continuous density $f_{\g Y}$ implies that
the multivariate Pareto distributions we study are models for asymptotic extremal dependence, and all $p$-variate extremal coefficients, $1\leq p\leq d$, are strictly smaller than their upper limit $p$.

For some non-empty subset $I\subset \{1,\dots, d\}$, the subvector $\g X_I = (X_j)_{j\in I}$, properly normalised, given that its $\ell_\infty$-norm is large converge in the sense of~\eqref{mpd} to the marginal $\g Y_I = (Y_j)_{j\in I}$ of $\g Y$ defined on $\mathcal L_I = \{ \g x_I \in [0,\infty)^{|I|}\setminus \{\g 0\}: \|\g x_I\|_\infty > 1\}$ with homogeneous density of order $-(|I| +1)$ given by
\begin{align}\label{margin_pareto}
  f_I(\g y_I) = \frac{\Lambda(\g 1)}{\Lambda_I(\g 1)} \int_{[0,\infty)^{d-|I|}} f_{\g Y}(\g y) \mathrm d \g y_{\setminus I} = \frac{\lambda_I(\g y_I)}{\Lambda_I(\g 1)}, \quad \g y_I \in \mathcal L_I,
\end{align}
where $\Lambda_I$ is the exponent measure of $\g Z_I$, and $\lambda_I$ is the density of $\Lambda_I$.

\begin{example}[Logistic distribution]\label{ex_log}
  The extremal logistic distribution with parameter $\theta\in(0,1)$ induces a multivariate
  Pareto distribution with density
  \begin{align}\label{dens_log}
    f_{\g Y}(\g y) = \frac{1}{d^{\theta}} \left(y_1^{-1/\theta}+\dots+y_d^{-1/\theta}\right)^{\theta-d}\prod_{i=1}^{d-1}\left(\frac{i}{\theta}-1\right) \prod_{i=1}^{d} y_i^{-1/\theta-1}, \quad \g y \in \mathcal L.
  \end{align}
\end{example}

\begin{example}[Hüsler--Reiss distribution]\label{ex:HR}
The H\"usler--Reiss distribution \citep{Husler1989} is parameterized by a symmetric,
strictly conditionally negative definite matrix $\Gamma = \{\Gamma_{ij}\}_{1\leq i,j\leq d}$ with $\diag(\Gamma) = \g 0$ and non-negative entries, that is, $\g a^\top \Gamma \g a < 0$ for all non-zero vectors $\g a \in \mathbb R^d$ with $\sum_{i=1}^d a_i = 0$. The corresponding density of the exponent measure can be written for any $k\in\{1,\dots, d\}$ as \citep[cf.,][]{Engelke2015}
\begin{align}\label{eq:fYHR}
  \lambda(\g y)
  &= y_k^{-2}\prod_{i\neq k} y_i^{-1} \phi_{d-1}\left(\g{\tilde y}_{\setminus k}; \Sigma^{(k)}\right), \quad \g y \in \mathcal E,
\end{align}
where $\phi_p(\cdot; \Sigma)$ is the density of a centred $p$-dimensional normal distribution with covariance matrix $\Sigma$, $\g{\tilde y} = \{\log(y_i/y_k) + \Gamma_{ik}/2\}_{i=1,\dots, d}$ and 
\begin{align}\label{sigma_k}
  \Sigma^{(k)}  =\frac{1}{2} \{\Gamma_{ik}+\Gamma_{jk}-\Gamma_{ij}\}_{i,j\neq k} \in\mathbb R^{(d-1)\times (d-1)}.
\end{align}
The matrix $\Sigma^{(k)}$ is strictly positive definite; see Appendix \ref{link_vario} for details.
The representation of the density in~\eqref{eq:fYHR} seems to 
depend on the choice of $k$, but, in fact, the value of the right-hand side of this equation is independent of $k$. The H\"usler--Reiss multivariate Pareto distribution has density 
$f_{\g Y}(\g y) = \lambda(\g y) / \Lambda(\mathbf 1)$ and the
strength of dependence between the $i$th and $j$th component is parameterized by $\Gamma_{ij}$, ranging from complete dependence for $\Gamma_{ij}=0$ and independence for $\Gamma_{ij}=+\infty$. In the bivariate case $d=2$ we have
\begin{align}\label{eq:HRbiv}
  \lambda(y_1,y_2)= \frac{y_1^{-2}y_2^{-1}}{\sqrt{2\pi \Gamma_{12}}} \exp\left[ -\frac{\left\{\log(y_2/y_1) + \Gamma_{12}/2\right\}^2}{2\Gamma_{12}}  \right], \quad (y_1,y_2) \in \mathcal E,
\end{align}
and $\Lambda(1,1) = 2 \Phi\left(\sqrt{\Gamma_{12}}/2\right)$, where $\Phi$ is the standard normal distribution function.
The extension of H\"usler--Reiss distributions to random fields are
Brown--Resnick processes \citep{bro1977, kab2009}, which are widely used models for spatial
extremes.

\end{example}

\begin{example}[Bivariate Pareto distribution]\label{ex_biv}
  In the general bivariate case $d=2$, due to homogeneity, the density $\lambda$ of the exponent measure can be characterised by a univariate distribution. Indeed, for any positive random variable $U^1_2$ with density $f_{U^1_2}$ and $\mathbb E U^1_2 =1$,
  \begin{align}\label{dens_biv}
    \lambda(y_1,y_2) = y_1^{-3} f_{U^1_2}(y_2/y_1), \quad (y_1,y_2) \in \mathcal E,
  \end{align}
  satisfies conditions (L1) and (L2) above and thus defines a valid exponent measure density.
  We call $U^1_2$ the extremal function at coordinate $2$, relative to coordinate~$1$ \citep[cf.,][]{dom2013,dom2016}. Equivalently, we  can write the density in terms of
  the extremal function $U^2_1$ at coordinate~$1$, relative to coordinate~$2$, as
  $\lambda(y_1,y_2) = y_2^{-3} f_{U^2_1}(y_1/y_2)$, $(y_1,y_2) \in \mathcal E$, and
  $U^2_1$ is related to $U^1_2$ via the measure change $\mathbb P(U^2_1 \leq z) = \mathbb E( \einsfun\{1/U^1_2 \leq z\} U^1_2)$, $z > 0$.
\end{example}
  The above is a general construction principle, since every valid exponent measure density can be obtained in this way. The bivariate H\"usler--Reiss distribution in~\eqref{eq:HRbiv} corresponds to the case of log-normal $U^1_2$ and $U^2_1$, but many other parametric and non-parametric examples are available \citep[e.g.,][]{BoldiDavison, CooleyDavisNaveau,BallaniSchlather,car2014}.

\subsection{Graphical models}
\label{graph_models}

A graph $\mathcal G = (V, E)$ is defined as a set of nodes $V = \{1,\dots, d\}$, also called vertices, together with a set of edges $E \subset V\times V$ of pairs of distinct
nodes. The graph is called undirected if for two nodes $i,j\in V$, $(i,j)\in E$ if and only if $(j,i)\in E$. For notational convenience, for undirected graphs we sometimes represent edges as unordered pairs $\{i,j\}\in E$. When counting the number
of edges, we count $\{i,j\}\in E$ such that each edge is considered only once.
A subset $C\subset V$ of nodes is called complete
if it is fully connected in the sense that $(i,j)\in E$ for all $i,j\in C$. We denote by $\mathcal C$ the set of all cliques, that is, the complete subsets that are not properly contained within any other complete subset. 

To each node $i\in V$ we associate a random variable $X_i$ with continuous state space $\mathcal X_i \subset \mathbb R$. The resulting random vector $\g X = (X_i)_{i\in V}$ takes
values in the Cartesian product $\mathcal X = \times_{i\in V} \mathcal X_i$.
Suppose that $\g X$ has a positive and continuous Lebesgue density $f_{\g X}$ on $\mathcal X$. For three disjoint subsets $A, B, C \subset V$ whose union is $V$, we say that $\g X_A$
is conditionally independent of $\g X_C$ given $\g X_B$ if the density factorizes as
\begin{align}\label{cond_ind}
  f_{\g X}(\g x) = \frac{f_{{A\cup B}}(\g x_{A\cup B}) f_{{B\cup C}}(\g x_{B\cup C})}{f_{{B}}(\g x_B)},
\end{align}
and we write $\g X_A \ci \g X_C \mid \g X_B$.
If $B=\emptyset$, then~\eqref{cond_ind} amounts to independence of $\g X_A$ and $\g X_C$.

The random vector $\g X$ is said to be a probabilistic graphical model
on the graph $\mathcal G = (V, E)$ if its distribution satisfies
the pairwise Markov property relative to $\mathcal G$, that is,
$X_i\ci X_j\mid \g X_{\setminus \{i,j\}}$ for all $(i,j)\notin E$.
If in addition, for any disjoint subsets $A,B,C\subset V$ such that
$B$ separates $A$ from $C$ in $\mathcal G$, $\g X_A\ci \g X_C\mid \g X_B$, then
$\g X$ is said to obey the global Markov property relative to $\mathcal G$.
Since $f_{\g X}$ is positive and continuous, it follows from the Hammersley--Clifford theorem  \citep[cf.,][Theorem 3.9]{Lauritzen}
that the two Markov properties are equivalent, and they are further equivalent to the factorization of the density
\begin{align}\label{eq:HCthm}
  f_{\g X}(\g x)= \prod_{C\in \mathcal C}{\psi_{C}( \g x_C)},\qquad  \g x\in \mathcal X,
\end{align}
for suitable functions $\psi_C$ on $\times_{i \in C} \mathcal X_i$.
If the graph $\mathcal G$ is decomposable, then this factorization can be rewritten
in terms of marginal densities
 \begin{align}\label{eq:DecHCthm}
 f_{\g X}(\g x) = {\prod_{C\in\mathcal C} f_{C}(\g x_C)\over \prod_{D\in\mathcal D} f_{D}(\g x_D)},\qquad  \g x\in \mathcal X,
\end{align}
where $\mathcal D$ is a multiset containing intersections between the cliques called separator sets; see \cite{Lauritzen} and Appendix \ref{def_graphs} for the definition of decomposability and separator sets.

\begin{example}\label{gauss}
  We recall that for a normal distribution $\g W = (W_i)_{i\in V}$ with invertible covariance matrix $\Sigma$, the precision matrix $\Sigma^{-1}$ contains the conditional independencies, or equivalently the graph structure, since for $i,j\in V$,
  $$ W_i \ci W_j \mid \g W_{\setminus \{i,j\}} \quad \iff \quad   \Sigma^{-1}_{ij} = 0.$$
\end{example}

\section{Conditional independence for threshold exceedances}
\label{sec:CI}
The notion of conditional independence has not been exploited in extreme value theory. In fact, for max-stable distributions it only leads to trivial probabilistic structures \citep{papastathopoulos2016conditional}. An exception are directed acyclic graphs for max-linear models studied in \cite{gis2018} and \cite{gis2018a}, which do however not admit densities. 

We therefore approach the problem from the perspective of threshold exceedances. Since the notion of independence is only defined on product spaces, the meaning of conditional independence is not straightforward for a multivariate Pareto distribution $\g Y = (Y_i)_{i \in V}$, $V=\{1,\dots,d\}$, with support on the $L$-shaped space $\mathcal L = \{\g x \in \mathcal E: \|\g x\|_\infty > 1\}$.
In this section we show that there is nevertheless a natural definition of conditional independence for $\g Y$. 
To this end, we restrict $\g Y$ to product spaces. For any $k\in V$,
we consider the random vector $\g Y^k$ defined as $\g Y$ conditioned on the event that $\{Y_k > 1\}$. Clearly, $\g Y^k$ has support on the product space $\mathcal L^k = \{ \g x \in \mathcal L: x_k > 1\}$ (cf., Figure \ref{supports}) and it admits the density
\begin{align}\label{fk_def}
  f^k(\g y) = \frac{f_{\g Y}(\g y)}{\int_{\mathcal L^k} f_{\g Y}(\g y) \mathrm d \g y} = \lambda(\g y), \quad \g y \in \mathcal L^k,
\end{align} 
since $\int_{\mathcal L^k} f_{\g Y}(\g y) \mathrm d \g y = 1 / \Lambda(\g 1) $ because of (L2) in Section~\ref{mevd}.
From \eqref{fk_def} we see that the densities $f^1,\dots, f^d$ coincide with $\lambda$
on the intersection of their supports. Therefore there are no problems with lack of self-consistency
as for instance in the model of \cite{HeffernanTawn2004}.

For any set $I \subset V$ with $k\in I$, the marginal $\g Y_I^k$ has density
\begin{align*}
     f^k_I(\g y_I) =  \int_{[0,\infty)^{d-|I|}} \lambda(\g y) \mathrm d \g y_{\setminus I} = \lambda_I(\g y_I), \quad \g y_I\in \mathcal L_I^k,
\end{align*}
which is homogeneous of order $-(|I|+1)$ on $\mathcal L_I^k = \{ \g x_I \in \mathcal L_I: x_k > 1\}$; see~\eqref{lambda_margin}. This is however not the case if $k\notin I$, since integration over $\g y_{\setminus I}$ then includes $y_k$ whose domain is $(1,\infty)$ in $\mathcal L^k$, and thus in general
$f^k_I(\g y_I) \neq \lambda_I(\g y_I)$, $\g y_I\in [0,\infty)^{|I|}.$
\vspace*{-1em}
\begin{figure}[ht]
\centering
{\includegraphics[height=5cm]{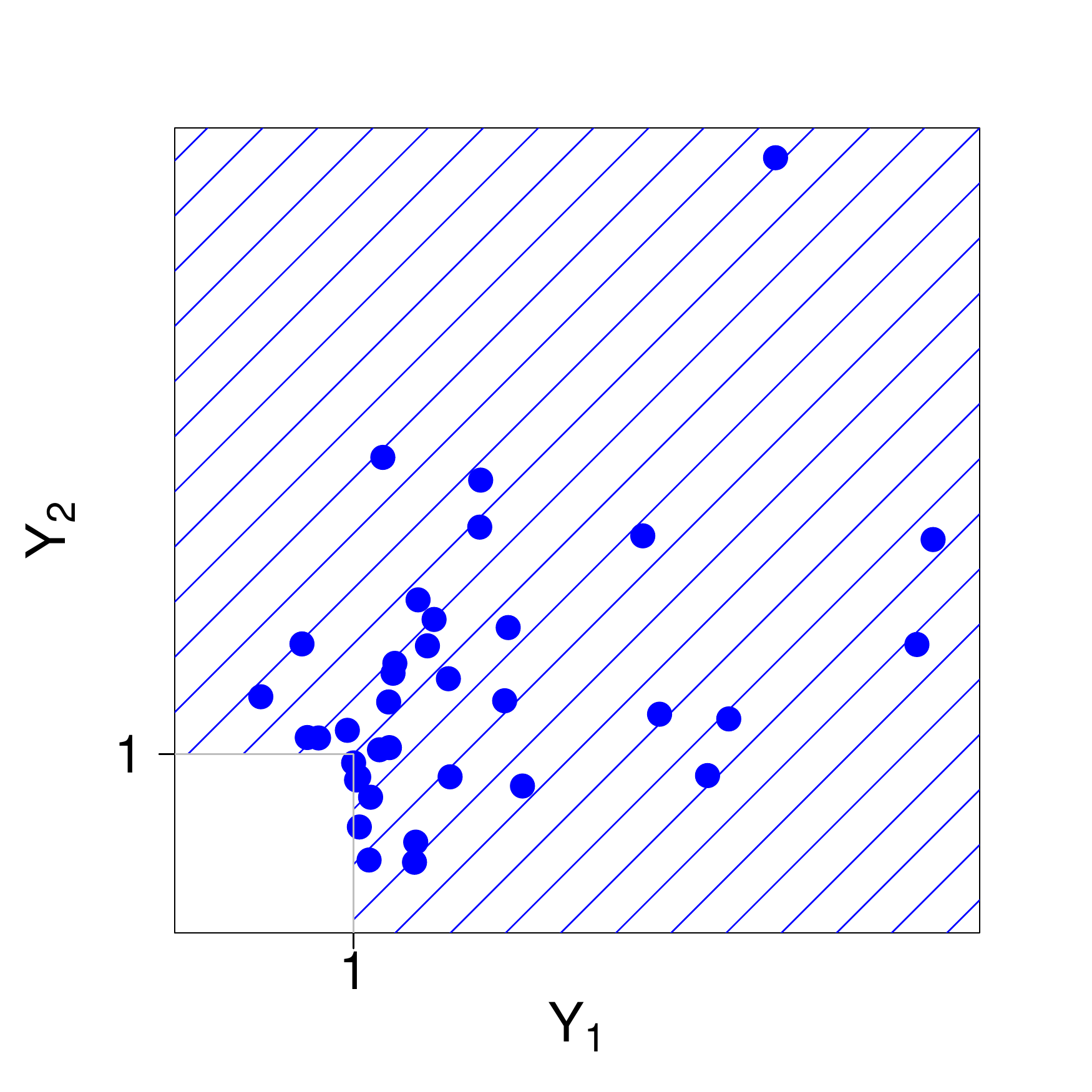}}%
{\includegraphics[height=5cm]{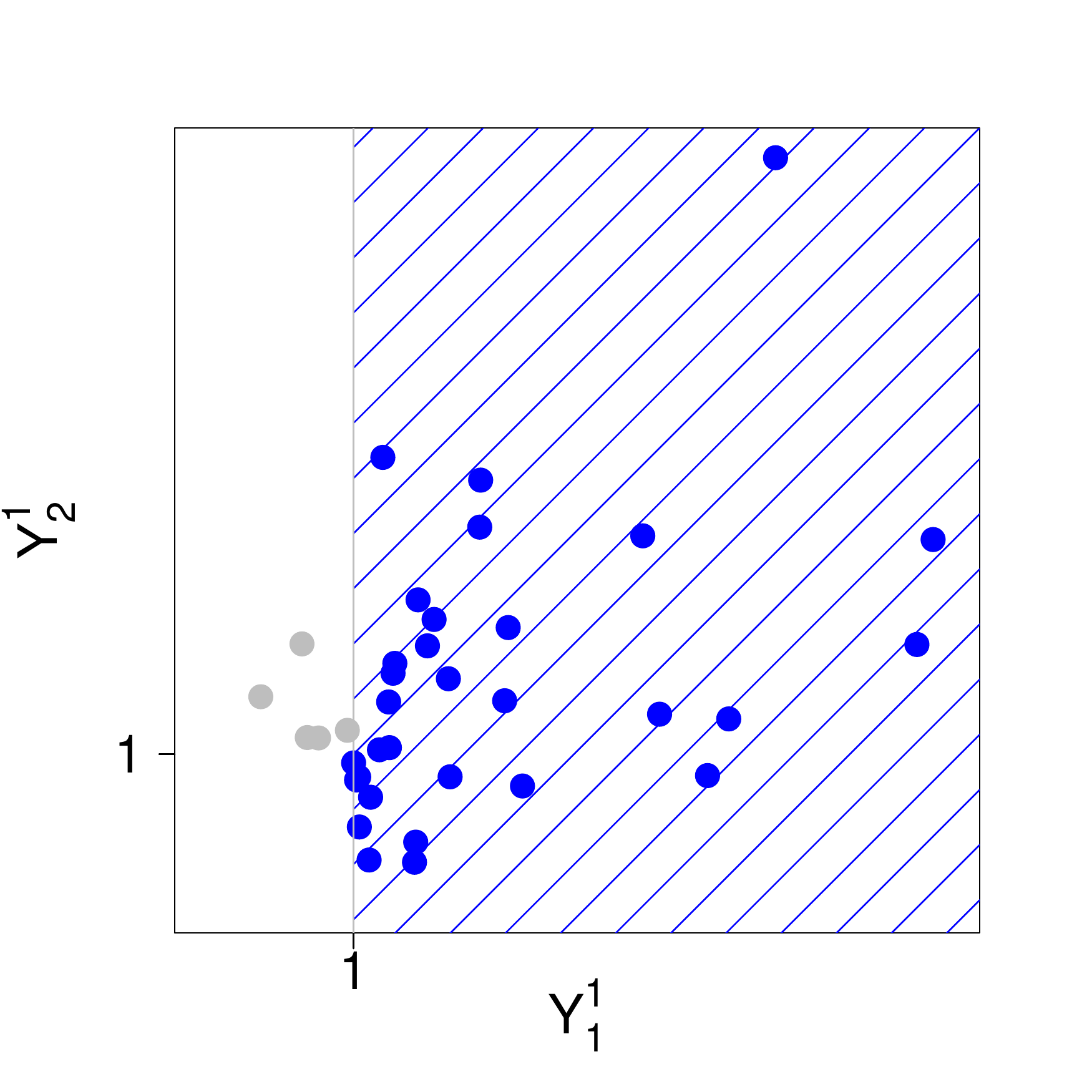}}%
{\includegraphics[height=5cm]{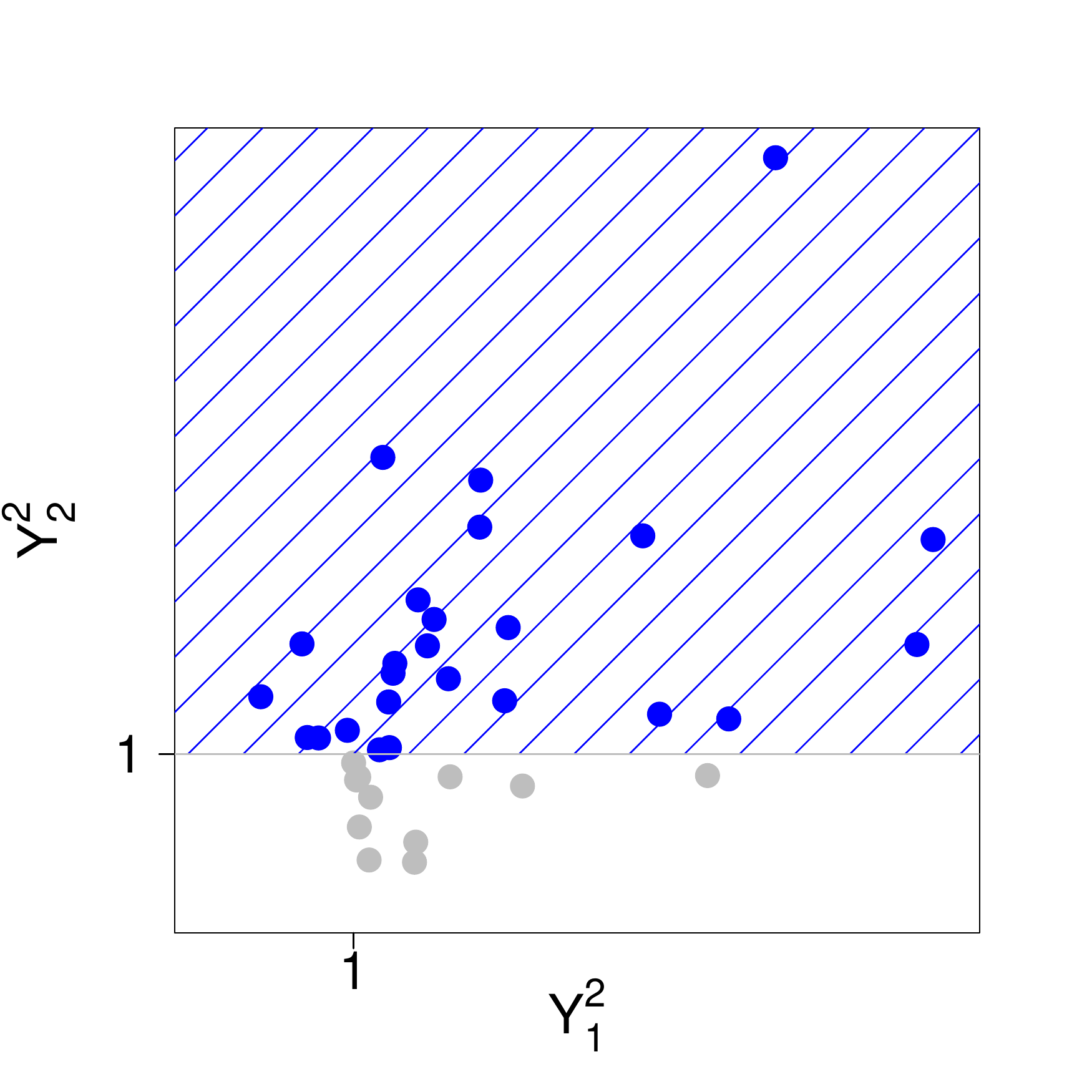}}%
\caption{Blue hatched areas are the support $\mathcal L$ of $\g Y$ (left), and the supports $\mathcal L^1$ of $\g Y^1$ (center) and $\mathcal L^2$ of $\g Y^2$ (right); points are samples of $\g Y$.}
\label{supports}
\end{figure} 
\vspace*{-.5em}
\begin{definition}\label{CI_def}
Suppose that $\g Y$ is multivariate Pareto and admits a positive and continuous density $f_{\g Y}$ on $\mathcal L$, and let $A,B,C \subset V$ be non-empty disjoint subsets whose union is $V=\{1,\ldots,d\}$.
We say that $\g Y_A$ is conditionally independent of $\g Y_C$ given $\g Y_B$ if
\begin{align}\label{eq:citail3}
  \forall k\in  \{1,\dots, d\}: \quad \g Y^k_A\ci \g Y^k_C \mid \g Y^k_B.
\end{align}
In this case we write $\g Y_A\perpe \g Y_C\mid \g Y_B$.
\end{definition}

In fact, this definition can be shown to be equivalent to a slightly weaker condition, and
to a factorization of the exponent measure density $\lambda$.
\begin{proposition}\label{prop_equiv}
Let $f_{\g Y}$ and the sets $A,B,C$ be as in the above definition, then $\g Y_A\perpe \g Y_C\mid \g Y_B$ is equivalent to any of the following two conditions.
\begin{enumerate}
\item[(i)]
  \begingroup\abovedisplayskip=-5pt 
  \begin{align}\label{ci_weak}    
    \exists k\in B : \quad    \g Y^k_A\ci \g Y^k_C \mid \g Y^k_B.
  \end{align}
  \endgroup
\item[(ii)]
  The density of the exponent measure factorizes as 
  \begin{align}\label{lambda_fac}
    \lambda(\g y) = {\lambda_{A\cup B}(\g y_{A\cup B}) \lambda_{B\cup C}(\g y_{B\cup C}) \over  \lambda_{B}(\g y_B)}, \qquad \g y \in \mathcal L.
  \end{align}
\end{enumerate}

\end{proposition}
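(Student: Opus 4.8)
The plan is to establish the cycle of implications
\[
  \g Y_A\perpe \g Y_C\mid \g Y_B \ \Longrightarrow\ \text{(i)} \ \Longrightarrow\ \text{(ii)} \ \Longrightarrow\ \g Y_A\perpe \g Y_C\mid \g Y_B,
\]
which makes all three statements equivalent. The first implication is immediate: by Definition~\ref{CI_def}, $\g Y_A\perpe \g Y_C\mid \g Y_B$ requires $\g Y^k_A\ci \g Y^k_C\mid \g Y^k_B$ at \emph{every} $k$, so, since $B\neq\emptyset$, specialising to any $k\in B$ already yields~\eqref{ci_weak}.

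For (i)$\Rightarrow$(ii) I would fix the index $k\in B$ provided by~\eqref{ci_weak} and unfold the classical conditional independence of $\g Y^k$ on the product space $\mathcal L^k$ via the definition~\eqref{cond_ind}, i.e.\ write $f^k(\g y)=f^k_{A\cup B}(\g y_{A\cup B})\,f^k_{B\cup C}(\g y_{B\cup C})/f^k_{B}(\g y_B)$ for $\g y\in\mathcal L^k$. Since $k$ belongs to each of $A\cup B$, $B\cup C$ and $B$, the corresponding marginals of $\g Y^k$ coincide with $\lambda_{A\cup B}$, $\lambda_{B\cup C}$ and $\lambda_{B}$ (the remark just above Figure~\ref{supports}), and $f^k=\lambda$ on $\mathcal L^k$ by~\eqref{fk_def}; hence~\eqref{lambda_fac} holds, at least on $\mathcal L^k$. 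The crucial step is to promote this from the slice $\mathcal L^k$ to all of $\mathcal L$, and this is exactly where homogeneity enters: both sides of~\eqref{lambda_fac} are positive and continuous, and both are homogeneous of the common order $-(d+1)$, since the three factors on the right-hand side have orders $-(|A\cup B|+1)$, $-(|B\cup C|+1)$, $-(|B|+1)$ and $|A|+|B|+|C|=d$. Given any $\g y\in\mathcal L$ with $y_k>0$, the rescaled point $t\g y$ lies in $\mathcal L^k$ once $t$ is large enough, so~\eqref{lambda_fac} holds at $t\g y$ and, by the matching homogeneity, at $\g y$; continuity then extends it to the remaining points of $\mathcal L$.

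For the converse (ii)$\Rightarrow\g Y_A\perpe \g Y_C\mid \g Y_B$, I would fix an \emph{arbitrary} $k\in\{1,\dots,d\}$, use $f^k=\lambda$ on $\mathcal L^k$ and~\eqref{lambda_fac} to write
\[
  f^k(\g y)=\lambda_{A\cup B}(\g y_{A\cup B})\cdot\frac{\lambda_{B\cup C}(\g y_{B\cup C})}{\lambda_B(\g y_B)},\qquad \g y\in\mathcal L^k,
\]
so that $f^k$ factors as a function of $\g y_{A\cup B}$ times a function of $\g y_{B\cup C}$. As $\mathcal L^k$ is a product space with respect to the partition $(A,B,C)$ and $f^k$ is positive and continuous there, the standard equivalence between such a factorization and the conditional independence~\eqref{cond_ind} for positive densities \citep[cf.,][]{Lauritzen} gives $\g Y^k_A\ci \g Y^k_C\mid \g Y^k_B$; since $k$ was arbitrary, Definition~\ref{CI_def} yields $\g Y_A\perpe \g Y_C\mid \g Y_B$.

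I expect the real obstacle to lie in this last implication for the indices $k\notin B$. There the conditioning event $\{Y_k>1\}$ distorts the marginals of $\g Y^k$ over index sets not containing $k$, so $f^k_{B\cup C}\neq\lambda_{B\cup C}$ and $f^k_B\neq\lambda_B$ in general, and one cannot read~\eqref{cond_ind} off from~\eqref{lambda_fac} directly as in the case $k\in B$. The remedy is to bypass marginal densities altogether and use the weaker product-factorization characterisation of conditional independence, which only asks for $f^k$ to split as (function of $\g y_{A\cup B}$)$\times$(function of $\g y_{B\cup C}$) --- a form that~\eqref{lambda_fac} supplies no matter where $k$ sits. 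A secondary but essential point, in (i)$\Rightarrow$(ii), is the homogeneity-and-continuity extension from $\mathcal L^k$ to $\mathcal L$; it is elementary but is precisely where the scaling structure of multivariate Pareto distributions makes $\perpe$ well behaved.
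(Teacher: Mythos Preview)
Your proposal is correct and follows essentially the same route as the paper's proof: the same cycle of implications, the same identification $f^k_I=\lambda_I$ whenever $k\in I$ (used in (i)$\Rightarrow$(ii) with $k\in B$), the same homogeneity rescaling to pass from $\mathcal L^k$ to $\mathcal L$, and the same appeal to the weak product-factorization criterion for conditional independence in (ii)$\Rightarrow\perpe$. The only cosmetic differences are that the paper scales by $1/t$ with $0<t<\min(y_k,1)$ rather than by a large $t$, and does not spell out the continuity step for the boundary points with $y_k=0$ that you mention.
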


A natural question is whether one can extend the definition of  $\g Y_A\perpe \g Y_C\mid \g Y_B$ to the case where $B = \emptyset$, meaning that $\g Y_A$ and $\g Y_C$ are independent on $\mathcal L$. In terms of the original definition, that would mean
that for any $k\in V$, $f^k(\g y) = f^k_A(\g y_A)f^k_C(\g y_C)$ for all $\g y \in \mathcal L^k$. Without losing generality, suppose $k\in A$, then
$f^k_C(\g y_C) = \lambda(\g y_A,\g y_C)/ \lambda_A(\g y_A)$ for any $\g y_A \in \mathcal L^k_A$ and $\g y_C \in [0,\infty)^{|C|}$. Therefore $f^k_C$ would be  homogeneous of order $-|C|$ and thus not integrable on $[0,\infty)^{|C|}$, a contradiction. In the next section we show that this property implies
that all graphical models defined in terms of the conditional independence $\perpe$ must be connected.

\section{Graphical models for threshold exceedances}
\label{graph_models_extremes}

The notion of conditional independence allows us to define graphical models
for threshold exceedances. 
As before, let $f_{\g Y}$ be the positive and continuous density on $\mathcal L$ of a multivariate Pareto distribution $\g Y$, proportional to the density $\lambda$ of the exponent measure $\Lambda$, and homogeneous of order $-(d+1)$.
Let $\mathcal G = (V, E)$ be an undirected graph with nodes $V = \{1,\dots, d\}$ and edge set $E$. Similarly to the classical probabilistic graphical models described in Section \ref{graph_models}, we say that $\g Y$ satisfies the pairwise Markov property on $\mathcal L$ relative to $\mathcal G$ if
\begin{align}\label{egm2}
  Y_i \perpe Y_j \mid \g Y_{\setminus \{i,j\}}, \quad  (i,j)\notin E,
\end{align}
that is, $Y_i$ and $Y_j$ are conditionally independent in the sense of Definition \ref{CI_def} given all other nodes whenever there is no edge between $i$ and $j$ in $\mathcal G$.
By definition, this is equivalent to saying that~$\g Y^k$ satisfies the usual pairwise Markov property on $\mathcal L^k$ relative to $\mathcal G$ for all $k\in V$.
The global Markov property for $\g Y$ is defined similarly.

\begin{definition}
  Let $\mathcal G=(V,E)$ be an undirected graph. If the multivariate Pareto distribution $\g Y$ with
  positive and continuous density $f_{\g Y}$ satisfies the pairwise
  Markov property~\eqref{egm2} relative to $\mathcal G$, we call the distribution of $\g Y$ an extremal graphical model with respect to $\mathcal G$.  
\end{definition}
For a decomposable graph $\mathcal G$ we obtain a factorization of the density $f_{\g Y}$ similar to
the classical Hammersley--Clifford theorem, showing that the Definition \ref{CI_def} of
conditional independence is natural for multivariate Pareto distributions. Let $\mathcal C$ and $\mathcal D$ be the sequences of cliques and separators of~$\mathcal G$, respectively, satisfying the running intersection property \eqref{rip} in Appendix~\ref{def_graphs}.

\begin{theorem}\label{thm:equivFactorHomog}
Let $\mathcal G=(V,E)$ be a decomposable graph and suppose that $\g Y$
has a multivariate Pareto distribution with positive and continuous
density $f_{\g Y}$ on $\mathcal L$. Denote the corresponding exponent measure and its density by $\Lambda$ and $\lambda$, respectively. Then the following are equivalent. 
\begin{enumerate}
\item[(i)] \label{eq:thmFactHomi} The distribution of ${\g Y}$ satisfies the pairwise Markov property relative to $\mathcal G.$
  \item[(ii)] \label{eq:thmFactHomii} The distribution of ${\g Y}$ satisfies the global Markov property relative to $\mathcal G.$
\item[(iii)] \label{eq:thmFactHomiii} The density $f_{\g Y}$ factorizes according to $\mathcal G$, that is,
\begin{align}\label{eq:producthc}
f_{\g Y}(\g y)= \frac{1}{\Lambda(\mathbf 1)} {\prod_{C\in \mathcal C} \lambda_C(\g y_C) \over \prod_{D\in \mathcal D} \lambda_D(\g y_D)},\quad \g y\in \mathcal L,
\end{align}
where the marginals $\lambda_I$ are positive, continuous and homogeneous of order $-(|I|+1)$ for any $I\subset V$.
\end{enumerate}
In all cases, the graph $\mathcal G$ is necessarily connected.
\end{theorem}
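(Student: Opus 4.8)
The plan is to prove the chain of equivalences (i) $\Leftrightarrow$ (ii) $\Leftrightarrow$ (iii) together with the final assertion that $\mathcal G$ must be connected.

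\medskip

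\noindent\textbf{Strategy for (i) $\Leftrightarrow$ (ii) $\Leftrightarrow$ (iii).}
The natural route is to transfer everything to the conditioned vectors $\g Y^k$, $k\in V$, which live on the product spaces $\mathcal L^k$ and have positive continuous densities $f^k = \lambda$ there. By Definition~\ref{CI_def} and the remark after~\eqref{egm2}, the pairwise (resp.\ global) Markov property for $\g Y$ relative to $\mathcal G$ holds if and only if each $\g Y^k$ satisfies the usual pairwise (resp.\ global) Markov property relative to $\mathcal G$ on $\mathcal L^k$. Since each $\mathcal L^k$ is an open product set and $f^k$ is positive and continuous on it, the classical Hammersley--Clifford theorem \citep[Theorem 3.9]{Lauritzen} applies to each fixed $k$: for that $k$, the pairwise property, the global property, and the factorization $f^k(\g y) = \prod_{C\in\mathcal C}\psi^k_C(\g y_C)$ (equivalently, for decomposable $\mathcal G$, $f^k = \prod_C f^k_C / \prod_D f^k_D$) are all equivalent. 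Thus (i) $\Leftrightarrow$ (ii) follows immediately by intersecting over $k$. For the equivalence with (iii), I would argue as follows. First, (iii) $\Rightarrow$ (i): if $f_{\g Y} = \lambda/\Lambda(\g 1)$ factorizes as in~\eqref{eq:producthc}, then on $\mathcal L^k$ we have $f^k = \lambda = \prod_C \lambda_C/\prod_D \lambda_D$, and restricting this identity to $\mathcal L^k$ and using that on this set the marginal densities $f^k_I$ for $I \ni k$ agree with $\lambda_I$ (as noted before Definition~\ref{CI_def}) gives the decomposable factorization of $f^k$; this must be checked for cliques/separators not containing $k$ as well, but there Proposition~\ref{prop_equiv}(ii) applied to the relevant conditional independence statements, combined with the global Markov property one is trying to establish, resolves the matching — alternatively, one simply invokes that any product of functions of clique-subvectors is a valid Hammersley--Clifford factorization of $f^k$, regardless of whether the factors are marginals, which already yields the pairwise Markov property for $\g Y^k$. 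Conversely, (ii) $\Rightarrow$ (iii): using the global Markov property and the running intersection ordering of $\mathcal C$, peel off one clique at a time. Concretely, pick a clique $C_m$ at the end of a perfect elimination ordering with separator $D_m = C_m \cap (C_1\cup\cdots\cup C_{m-1})$; then $C_m\setminus D_m$ is separated from the rest by $D_m$, so by the global Markov property and Proposition~\ref{prop_equiv}(ii),
\begin{align*}
\lambda(\g y) = \frac{\lambda_{C_m}(\g y_{C_m})\,\lambda_{V\setminus(C_m\setminus D_m)}(\g y_{V\setminus(C_m\setminus D_m)})}{\lambda_{D_m}(\g y_{D_m})}.
\end{align*}
The second factor is the exponent-measure density of the marginal Pareto distribution on the smaller vertex set $V\setminus(C_m\setminus D_m)$ — here one uses~\eqref{margin_pareto} and the compatibility of marginalization with the graph structure — and it is again an extremal graphical model on the induced (still decomposable) subgraph. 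Iterating the peeling and dividing by $\Lambda(\g 1)$ yields~\eqref{eq:producthc}; the homogeneity of order $-(|I|+1)$ of each $\lambda_I$ is inherited from~\eqref{lambda_margin}, and positivity and continuity are inherited from those of $\lambda$ (equivalently $f_{\g Y}$).

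\medskip

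\noindent\textbf{Connectedness.}
Suppose $\mathcal G$ were disconnected, with vertex set partitioned into nonempty $A$ and $C$ having no edges between them. Then $B=\emptyset$ separates $A$ from $C$, and — were the global Markov property to make sense with empty separator — it would force $\g Y_A$ and $\g Y_C$ to be independent on $\mathcal L$. But the paragraph immediately following Proposition~\ref{prop_equiv} shows this is impossible: conditioning on $Y_k>1$ for some $k\in A$, independence would force $f^k_C(\g y_C) = \lambda(\g y_A,\g y_C)/\lambda_A(\g y_A)$ to be homogeneous of order $-|C|$, hence not integrable on $[0,\infty)^{|C|}$, a contradiction. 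Equivalently, one can read this off the factorization~\eqref{eq:producthc}: a disconnected decomposable graph would make $\mathcal C$ split into two groups with an empty separator, giving $\lambda(\g y)=\lambda_A(\g y_A)\lambda_C(\g y_C)$, which is homogeneous of order $-(|A|+1)-(|C|+1) = -(d+2) \neq -(d+1)$, contradicting (L1). So $\mathcal G$ is necessarily connected.

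\medskip

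\noindent\textbf{Main obstacle.}
The routine part is the per-$k$ application of classical Hammersley--Clifford. The delicate point — and the one I would spend most care on — is the passage between the exponent-measure density $\lambda$ and the conditioned densities $f^k$ in statement (iii): $\lambda_I$ coincides with the marginal density $f^k_I$ only when $k\in I$, so when a clique or separator does not contain the particular index $k$ one is conditioning on, the clean marginal factorization of $f^k$ is not literally~\eqref{eq:producthc}. Reconciling this requires either (a) noting that any clique-wise product is still a legitimate Hammersley--Clifford factorization of $f^k$ (which suffices for the Markov properties), or (b) carefully using the homogeneity of $\lambda$ and Proposition~\ref{prop_equiv}(ii) to show the two factorizations are consistent. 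A secondary technical nuisance is making the "peeling" induction rigorous: one must verify that the marginal of a multivariate Pareto extremal graphical model on a subset obtained by removing a simplicial vertex-group is again a multivariate Pareto extremal graphical model on the induced subgraph, which follows from~\eqref{margin_pareto} together with the stability of multivariate Pareto distributions under marginalization, but deserves an explicit statement.
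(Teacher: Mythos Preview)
Your proposal is correct and follows essentially the same approach as the paper: transfer the Markov properties to the conditioned vectors $\g Y^k$ on the product spaces $\mathcal L^k$, invoke the classical Hammersley--Clifford theorem there, and obtain the factorization~\eqref{eq:producthc} by peeling off cliques via the running intersection property. The only notable difference in execution is that for the peeling step you appeal directly to Proposition~\ref{prop_equiv}(ii) and then recurse on the marginal Pareto on $V\setminus(C_m\setminus D_m)$, whereas the paper works explicitly with $f^k$ and at each step chooses $k\in D_j$ so that all three sets in the split contain $k$, making the identification $f^k_I=\lambda_I$ immediate; your version is slightly cleaner but, as you correctly flag, requires verifying that the marginal inherits the global Markov property on the induced subgraph, which the paper handles by integrating the full decomposable factorization of $f^{k'}$ for the new $k'\in D_{m-1}$.
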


\begin{remark}
  The above theorem shows that only connected extremal graphical models can arise. This is related to
  the assumption of multivariate regular variation in \eqref{mrv} that implies asymptotic dependence
  between all components. Loosely speaking, unconnected components would correspond
  to asymptotically independent components.
\end{remark}

\begin{remark}
  If the graph $\mathcal G$ in the above theorem is non-decomposable, it is expected that the density $f_{\g Y}$ still factorizes
  into factors on the cliques of the graph. These factors can however no longer be identified
  with marginal densities, and it is an open problem whether they can be chosen to be homogeneous.
\end{remark}

Since $\mathcal L$ is not a product space, unlike in the classical Hammersley--Clifford theorem for
decomposable graphs in~\eqref{eq:DecHCthm}, the factors in the factorization of the density $f_{\g Y}$ in~\eqref{eq:producthc} are not the marginals $f_{I}$ but the marginals of the exponent measure density $\lambda_I$. It holds however that $f_{I}(\g y_I) = \lambda_I(\g y_I) / \Lambda_I(\g 1)$ for all $\g y_I \in \mathcal L_I \subset \{\g x_I: \g x \in \mathcal L\}$.

As a first application, the above theorem allows us to formally analyse the
conditional independencies and graphical structures of models in the multivariate extreme value literature.

\begin{example}\label{time_series}
  One of the simplest examples of a graph is a chain, that is, 
  $$E = \{\{1,2\},\{2,3\},  \dots, \{d-1,d\}\}.$$
  \cite{coles1991modelling} proposed a model that factorizes with respect to this chain where all bivariate marginals are logistic (cf., Example \ref{ex_log}). This was extended to general bivariate marginals in \cite{smi1997}.
  More generally, in the study of extremes of stationary Markov chains the limiting objects are so-called tail chains. The latter induce multivariate Pareto distributions that can readily be seen to factorize with respect to a chain; see \cite{smi1992} \cite{bas2009} and \cite{jan2014}.
\end{example}

\begin{example}
  It turns out that many of the multivariate models in the literature do not have any
  conditional independencies, that is, their underlying graph is fully connected.
  For instance, this holds for the logistic multivariate Pareto distribution in Example \ref{ex_log}, the Dirichlet mixture model in \cite{BoldiDavison},
  and the pairwise beta distribution in \cite{CooleyDavisNaveau}.
\end{example}

\begin{example}\label{HR_stable}
Similar to Gaussian distributions, an appealing property of the H\"usler--Reiss model is
its stability under taking marginals.
Indeed, for any $I \subset V$ and $k \in I$  
the marginal density of the exponent measure is
\begin{align*}
  \lambda_I(\g y_I) &= \int_{[0,\infty)^{d-|I|}} \lambda_I(\g y) \mathrm{d}\g y_{\setminus I}= y_k^{-2}\prod_{i\in I\setminus\{k\}} y_i^{-1} \phi_{|I|-1}\left\{\g{\tilde y}_{I\setminus\{k\}}; \Sigma^{(k)}_{I}\right\},
\end{align*}
with the notation of Example \ref{ex:HR}, where $\Sigma^{(k)}_{I}$ is the matrix in~\eqref{sigma_k} induced by the submatrix~$\Gamma_{I}$. Thus, $f_{I}( \g y_I) = \lambda_I(\g y_I) / \Lambda_I(\mathbf 1)$ is the density of the $|I|$-dimensional H\"usler--Reiss Pareto distribution with parameter matrix $\Gamma_{I}$.

By Theorem \ref{thm:equivFactorHomog}, the density of a H\"usler--Reiss distribution that satisfies the pairwise Markov property relative to some decomposable graph $\mathcal G$
factorizes into lower-dimensional H\"usler--Reiss distributions. The explicit formula is given in Corollary \ref{cor_HR_dens} in Appendix~\ref{app_decomp}.
\end{example}

Theorem \ref{thm:equivFactorHomog} can also be seen as a construction principle to
build new classes of extreme value distributions in a modular way by combining
lower-dimensional marginals. 
The following corollary shows how a  multivariate Pareto distributions can be defined
that factorizes according to a desired underlying graphical structure.
This is particularly useful in high-dimensional problems to ensure model sparsity.

\begin{corollary}\label{cor_construction}
  Let $\mathcal G$ be a decomposable and connected graph and suppose that
  $\{\lambda_I : I\in \mathcal C \cup \mathcal D\}$ is a set
    of valid, positive and continuous exponent measure densities in the sense of (L1) and (L2) in Section \ref{mevd}.
    For $ D\subset C$, $D\in \mathcal D,$ $C\in\mathcal C$, assume that they satisfy the consistency constraint
    \begin{align}\label{eq_consistency}
      \lambda_D(\g y_D)=\int_{[0,\infty)^{|C\setminus D|}} \lambda_C(\g y_C) \,\mathrm d\g y_{C\setminus D}.
    \end{align}
    The density of a valid $d$-dimensional exponent measure $\Lambda$ is then given by
    \begin{align*}
      \lambda(\g y)= {\prod_{C\in \mathcal C} \lambda_C(\g y_C) \over \prod_{D\in \mathcal D} \lambda_D(\g y_D)},\quad \g y\in \mathcal L,
    \end{align*}
    and the function $f_{\g Y}(\g y) = \lambda(\g y) / \Lambda(\mathbf 1)$, $\g y \in\mathcal L$, is the density of a multivariate Pareto distribution satisfying the pairwise Markov property relative to $\mathcal G$.
\end{corollary}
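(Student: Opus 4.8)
\emph{Proof plan.} The plan is to verify that the function $\lambda$ defined in the statement satisfies properties (L1) and (L2) of Section \ref{mevd}; by the converse assertion recorded there, this makes $\lambda$ a valid exponent measure density, and hence $f_{\g Y}=\lambda/\Lambda(\g 1)$ a genuine multivariate Pareto density. The pairwise Markov property is then read off from Theorem \ref{thm:equivFactorHomog}, once one checks that the prescribed factors $\lambda_I$, $I\in\mathcal C\cup\mathcal D$, are really the marginals of $\lambda$ in the sense of \eqref{lambda_margin}. Positivity and continuity of $\lambda$ are inherited from those of the $\lambda_I$, the denominator being a finite product of strictly positive continuous functions. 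Homogeneity (L1) is a short computation: since $\lambda_C$ is homogeneous of order $-(|C|+1)$ and $\lambda_D$ of order $-(|D|+1)$, one gets $\lambda(t\g y)=t^{\,-\sum_{C}(|C|+1)+\sum_{D}(|D|+1)}\lambda(\g y)$ for $t>0$, and it remains to invoke the identities $|\mathcal C|-|\mathcal D|=1$ and $\sum_{C\in\mathcal C}|C|-\sum_{D\in\mathcal D}|D|=d$, valid for any connected decomposable graph by the running intersection property (Appendix \ref{def_graphs}); together these make the exponent $-(d+1)$.

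The core of the argument, and the step I expect to be the main obstacle, is (L2) together with the identification of the marginals. Here I would prove, by induction on the number of cliques and peeling leaves off a clique tree of $\mathcal G$ satisfying the running intersection property, that $\int_{[0,\infty)^{d-|I|}}\lambda(\g y)\,\mathrm d\g y_{\setminus I}=\lambda_I(\g y_I)$ for every $I\in\mathcal C\cup\mathcal D$. In the inductive step one selects a leaf clique $C'$ with separator $D'$; the coordinates $\g y_{C'\setminus D'}$ occur only in the numerator factor $\lambda_{C'}$, and the consistency constraint \eqref{eq_consistency} gives $\int_{[0,\infty)^{|C'\setminus D'|}}\lambda_{C'}(\g y_{C'})\,\mathrm d\g y_{C'\setminus D'}=\lambda_{D'}(\g y_{D'})$, which cancels $\lambda_{D'}$ in the denominator and reduces $\lambda$ to the density built from the clique tree with $C'$ removed. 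All integrands are nonnegative, so Tonelli's theorem legitimizes the interchanges; keeping track of which coordinates survive at each stage is the delicate bookkeeping. Granting this identity, (L2) for $\lambda$ follows for each node $i$ by choosing a clique $C\ni i$, integrating out $\g y_{\setminus C}$ first, and applying (L2) for $\lambda_C$: $\int_{\{y_i>1\}}\lambda(\g y)\,\mathrm d\g y=\int_{\{y_i>1\}}\lambda_C(\g y_C)\,\mathrm d\g y_C=1$. The same bound gives $\Lambda(\g 1)=\int_{\mathcal E\setminus[\g 0,\g 1]}\lambda\le\sum_{i}\int_{\{y_i>1\}}\lambda=d<\infty$, so $f_{\g Y}$ is indeed a well-defined positive continuous density.

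It remains to conclude the Markov property. The marginalization identity above says precisely that $\lambda=\prod_{C\in\mathcal C}\lambda_C / \prod_{D\in\mathcal D}\lambda_D$ is the factorization \eqref{eq:producthc} with the $\lambda_I$ being the genuine marginals of $\lambda$. Since $\g Y$ is multivariate Pareto with positive continuous density and $\mathcal G$ is decomposable, Theorem \ref{thm:equivFactorHomog} applies, and implication (iii)$\Rightarrow$(i) shows that $\g Y$ satisfies the pairwise --- in fact the global --- Markov property relative to $\mathcal G$, which completes the argument.
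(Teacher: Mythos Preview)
Your proposal is correct and follows essentially the same approach as the paper: verify homogeneity of order $-(d+1)$ via the clique/separator count identities, use sequential integration along the running intersection property together with the consistency constraint \eqref{eq_consistency} to obtain the normalization, and then invoke Theorem~\ref{thm:equivFactorHomog} for the Markov property. The paper's proof is much terser and checks directly that $f_{\g Y}$ integrates to $1$ rather than verifying (L2), whereas you additionally make explicit the identification of the prescribed $\lambda_I$ with the true marginals of $\lambda$; this extra care is useful since Theorem~\ref{thm:equivFactorHomog}(iii) is stated in terms of genuine marginals.
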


\subsection{Tree graphical models}
\label{trees}

A tree is a special case of a decomposable graphical model that is connected and has no cycles. All cliques are then of size two and the separators contain only one node. Let $\mathcal T = (V,E)$ be an undirected tree with nodes $V=\{1,\dots, d\}$ and edge set $E \subset V\times V$.
Suppose that $\g Y = (Y_i)_{i\in V}$ follows a multivariate Pareto distribution on $\mathcal L$ with density $f_{\g Y}$ that is an extremal graphical model with respect to the tree $\mathcal T$. Theorem \ref{thm:equivFactorHomog} yields the factorization
\begin{align}\label{tree_fact}
f_{\g Y}(\g y) = \frac{1}{\Lambda(\g 1)} \prod_{\{i,j\}\in E} {\lambda_{ij}(y_i,y_j) \over y_i^{-2} y_j^{-2}} \prod_{i\in V} y_i^{-2},
\end{align}
where $\lambda_{ij} = \lambda_{\{i,j\}}$ are the bivariate marginals of the exponent measure density $\lambda$ corresponding to $\g Y$.   
The formula~\eqref{tree_fact} allows the extension of the modelling approach by \cite{smi1997} described in Example \ref{time_series} from time series to general tree structures. Such tree models are able to represent more complex dependencies and, moreover, are suitable beyond temporal data for multivariate or spatial applications.

Thanks to the relatively simple structure of a tree, more explicit results
can be derived than for general graphical models. To this end, we define a new, directed tree $\mathcal T^k = (V,E^k)$ rooted at an arbitrary but fixed node $k\in V$. The edge set $E^k$ consist of all edges $e\in E$ of the tree $\mathcal T$
pointing away from node $k$; see Figure \ref{dir_tree} for an example with $k=2$.
For the resulting directed tree we define a set $(U_e)_{e\in E^k}$ of independent random variables, where for $e = (i,j)$, the distribution of $U_e = U^i_j$ is the extremal function of $\lambda_{ij}$ at coordinate $j$, relative to coordinate $i$; see~\eqref{dens_biv} in Example \ref{ex_biv} for the definition of extremal functions.
The following stochastic representation of the random vectors $\g Y^k$, $k\in V$, provides a better understanding of the stochastic structure of multivariate Pareto distributions factorizing on a tree, and it is the main ingredient for efficient simulation in Section \ref{simulation}.

\begin{figure}[ht]
\centering
{\includegraphics[clip,height=5cm, page=6]{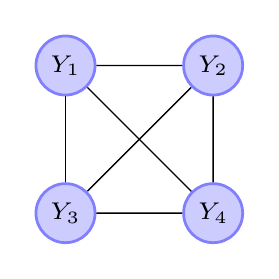}}%
\caption{A tree $\mathcal T^2$ rooted at node $k=2$ with the extremal functions in Proposition \ref{prop_tree} on the edges.}
\label{dir_tree}
\end{figure}

\begin{proposition}\label{prop_tree}
  Let $\g Y$ be a multivariate Pareto distribution with positive and continuous density on $\mathcal L$  that factorizes with respect to the tree $\mathcal T$. With the notation above, and for a standard Pareto distribution $P$, we have the joint stochastic representation for $\g Y^k$ on $\mathcal L^k$
\begin{align}\label{tree_rep}
  Y^k_i \, \stackrel{d}{=} 
  \begin{cases}
     P, & \text{for } i = k,\\
     P \times \prod_{e\in \ph(ki)} U_e, &\text{for } i\in V\setminus \{k\},
  \end{cases}
\end{align}
where $\ph(ki)$ denotes the set of edges on the unique path from node $k$ to node $i$ on the tree~$\mathcal T^k$.
\end{proposition}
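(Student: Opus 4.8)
The plan is to prove the stochastic representation \eqref{tree_rep} by induction on the distance from the root $k$, using the factorization \eqref{tree_fact} together with the bivariate construction via extremal functions from Example \ref{ex_biv}. First I would recall that conditioning on $\{Y_k > 1\}$ gives the density $f^k(\g y) = \lambda(\g y)$ on $\mathcal L^k$, and that by \eqref{margin_pareto} and the single-node marginal identity $\lambda_{\{k\}}(y_k) = y_k^{-2}$, the $k$th coordinate $Y^k_k$ is standard Pareto, which is the base case $i=k$. The key structural observation is that the tree factorization \eqref{tree_fact}, rewritten in terms of the directed tree $\mathcal T^k$ rooted at $k$, takes a Markovian form: for each directed edge $e=(i,j)\in E^k$ we have the conditional density of $Y^k_j$ given $Y^k_i = y_i$ proportional to $\lambda_{ij}(y_i, y_j)/\lambda_{\{i\}}(y_i) = y_i^2 \lambda_{ij}(y_i,y_j)$, and using $\lambda_{ij}(y_i,y_j) = y_i^{-3} f_{U^i_j}(y_j/y_i)$ from \eqref{dens_biv} this equals $y_i^{-1} f_{U^i_j}(y_j/y_i)$, i.e.\ the density of the product $y_i \cdot U^i_j$. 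This identifies $U_e = U^i_j$ as the multiplicative increment along edge $e$.

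The main steps, in order, are: (1) reindex the factorization \eqref{tree_fact} along $\mathcal T^k$ so that $\lambda(\g y) = \prod_{i\in V} y_i^{-2} \cdot \prod_{(i,j)\in E^k} \bigl\{\lambda_{ij}(y_i,y_j) y_i^2 y_j^2 \bigr\} \cdot \Lambda(\g 1)^{-1}$; here one uses that a rooted tree assigns each non-root node a unique parent, so the product over $V\setminus\{k\}$ of $y_j^{-2}$ cancels exactly one of the two $y_j^{-2}$ factors attached to each edge incident to $j$, and the leftover $y_k^{-2}$ survives; (2) substitute \eqref{dens_biv} to write each edge factor as $y_i^{-1} f_{U^i_j}(y_j/y_i)$, obtaining $f^k(\g y) = y_k^{-2}\prod_{(i,j)\in E^k} y_i^{-1} f_{U^i_j}(y_j/y_i)$ on $\mathcal L^k$ (the constant $\Lambda(\g 1)$ must disappear, which one checks by confirming the right-hand side integrates to one over $\mathcal L^k$, equivalently by the normalization in \eqref{fk_def}); (3) recognize this density as that of the random vector defined by $Y^k_k \sim P$ standard Pareto and, traversing $\mathcal T^k$ away from the root, $Y^k_j = Y^k_{i} \cdot U^i_j$ for each edge $(i,j)\in E^k$, with all $U_e$ mutually independent and independent of $P$ — this is a standard change-of-variables computation for a Markov chain indexed by a tree, where the Jacobian of $(y_k, \{y_j/y_{\mathrm{pa}(j)}\}_{j\neq k}) \mapsto \g y$ contributes exactly the factors $\prod_{j\neq k} y_{\mathrm{pa}(j)}^{-1}$ needed to match; (4) finally, unwind the recursion: iterating $Y^k_j = Y^k_{\mathrm{pa}(j)} U^{\mathrm{pa}(j)}_j$ from $j$ back to the root $k$ telescopes to $Y^k_i = P \times \prod_{e\in \ph(ki)} U_e$, giving \eqref{tree_rep}.

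The main obstacle I expect is the careful bookkeeping in step (1)–(3): one must verify that rerooting the undirected factorization \eqref{tree_fact} at $k$ distributes the homogeneity factors $y_i^{-2}$ correctly among edges and the root, and that the resulting product of bivariate conditional densities is genuinely the joint density of the proposed tree-indexed Markov construction rather than merely proportional to it. The normalization — showing the constant $\Lambda(\g 1)$ cancels and no spurious Jacobian factor remains — is the delicate point; it is handled cleanly by noting that $f^k$ in \eqref{fk_def} is already a bona fide probability density on $\mathcal L^k$, that the proposed construction also yields a probability density on $\mathcal L^k$ (since $P$ and the $U_e$ are honest random variables with the extremal-function measure-change ensuring $\mathbb{E}\, U^i_j = 1$ and hence the correct supports), and that the two densities agree pointwise up to the multiplicative constant, which therefore must be $1$. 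A secondary technical point is to confirm that the support of the constructed vector is exactly $\mathcal L^k$, i.e.\ that $\|\g Y^k\|_\infty > 1$ holds almost surely because $Y^k_k = P > 1$ on the conditioning event; this is immediate once one recalls $\g Y^k$ is $\g Y$ conditioned on $\{Y_k>1\}$.
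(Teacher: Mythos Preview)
Your proposal is correct and follows essentially the same route as the paper's proof: compute the density of the tree-indexed multiplicative construction via the change of variables $(y_k,\{y_j/y_{\mathrm{pa}(j)}\}_{j\neq k})\mapsto \g y$, rewrite each factor using the extremal-function identity \eqref{dens_biv}, use the rooted-tree bookkeeping that every non-root node has exactly one incoming edge, and match against the factorization \eqref{tree_fact}. The paper does this in the reverse direction (starting from the constructed vector and arriving at \eqref{tree_fact}), but the computation is identical; your worry about the normalizing constant $\Lambda(\g 1)$ is unnecessary, since $f^k=\lambda$ on $\mathcal L^k$ by \eqref{fk_def} and \eqref{tree_fact} gives $\lambda=\Lambda(\g 1)f_{\g Y}$ directly, so no separate integrability check is needed.
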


\begin{remark}
  The same object as in \eqref{tree_rep} has been obtained in \cite{seg2019} as the limit of regularly varying
  random vectors that satisfy a Markov condition on a tree. In analogy to the tail chains
  in Example \ref{time_series}, they term it a tail tree.  
\end{remark}

\begin{example}\label{ex_tree}
  Suppose all bivariate marginals $\lambda_{ij}$ for $\{i,j\}\in E$ of a tree Pareto model are of logistic type with parameter $\theta_{ij}\in(0,1)$ as defined in Example \ref{ex_log}. This tree logistic model
  is a generalization of the chain logistic model considered in \cite{coles1991modelling}. In this symmetric case, the extremal functions $U^i_j$ and $U^j_i$ have the same distribution with stochastic
  representation $F/G$, where $F$ follows a Fr\'echet$(1/\theta, c_\theta)$ distribution with
  scale parameter $c_\theta = \Gamma(1-\theta)^{-1}$ and $(G/c_\theta)^{-1/\theta}$ follows a Gamma$(1 - \theta,1)$ distribution, where we abbreviated $\theta = \theta_{ij}$ and $\Gamma$ is the Gamma function.
\end{example}
  Similarly we can define a H\"usler--Reiss tree model, or use asymmetric models for $\lambda_{ij}$ such as the Dirichlet model in \cite{BoldiDavison} for some or all of the edges $\{i,j\}\in E$. In asymmetric models, the extremal functions $U^i_j$ and $U^j_i$ have in general different distributions. We refer to Section 4 in \cite{dom2016} for explicit formulas for extremal function distributions of commonly used model classes.

\subsection{H\"usler--Reiss graphical models}
\label{HRgraph}

In many ways, the class of H\"usler--Reiss distributions introduced in Example \ref{ex:HR} can be seen as the natural analog of Gaussian distributions in the world of asymptotically dependent extremes. They are parameterized by the variogram of Gaussian distributions, and their
statistical inference \citep{wad2013, Engelke2015} and exact simulation \citep{dom2016} involves tools that are closely related to
the corresponding methods for normal models.

Despite the similarities to Gaussian distributions, there are subtle but important differences that render analysis and statistical inference of  H\"usler--Reiss distributions more difficult.
In order to characterise conditional independence and graphical structures in these models, we first recall some results related to the original construction. The max-stable H\"usler--Reiss distribution has a stochastic representation as componentwise maxima 
\begin{align}\label{eq:HRmax}
  \g Z = \max_{l\in\mathbb N}  U_l \exp\left\{ \g W_l - \diag(\Sigma) / 2 \right\}, 
\end{align}
where $\{U_l:l\in\mathbb N\}$ is a Poisson point process on $[0,\infty)$ with intensity $u^{-2}\mathrm d u$, and $\g W_l$ are independent copies of a $d$-dimensional normal distribution $\g W$ with zero mean and covariance matrix $\Sigma$. Subtracting 
$\mathbb E (\exp\g W) = \diag(\Sigma) / 2$ in the exponent normalises the marginals of  $\g Z$ to be standard Fr\'echet.
 \cite{kab2009} show that the distribution of $\g Z$ only depends on the strictly conditionally negative definite variogram matrix of~$\g W$,
$$\Gamma_{ij} = \mathbb E (W_i - W_j)^2, \qquad i,j\in V.$$
Importantly, this implies that the representation in~\eqref{eq:HRmax} is not unique since any centred, possibly degenerate normal distribution $\g W$ with variogram matrix $\Gamma$ leads to the same max-stable H\"usler--Reiss distribution. Let 
\begin{align}\label{gamma_set}
  \mathcal S_\Gamma = \{\Sigma\in\mathbb R^{d\times d} \text{ covariance matrix}: \einsfun \diag(\Sigma)^\top +  \diag(\Sigma)\einsfun^\top - 2 \Sigma = \Gamma\},
\end{align}
be the set of all covariance matrices that correspond to the same variogram matrix $\Gamma$; see Appendix \ref{link_vario}.
The H\"usler--Reiss Pareto distribution $\g Y$ associated with $\g Z$ is defined by its density in Example \ref{ex:HR}, which is also parameterized by $\Gamma$.
We recall that for a normal distribution~$\g W$ with invertible covariance matrix $\Sigma$, the precision matrix $\Sigma^{-1}$ contains the conditional independencies; see Example \ref{gauss}.
A first, naive guess would be that the graph structure of~$\g W$ used in the construction
of $\g Z$ directly translates
into the extremal graph structure of the H\"usler--Reiss Pareto distribution $\g Y$. This is however not the case.
\begin{example}\label{ex_sig}
  We consider three examples for $\g W$ in the representation~\eqref{eq:HRmax} with $d=4$.
  \begin{enumerate}
    \item
      Let $W_i$, $i=1,\dots, 4$, be independent standard normal distributions, then
      $\Sigma^{-1} = \diag(1,\dots,1)$ and $\Gamma_{ij} = 2$ if $i\neq j$ and zero otherwise.
      The graph underlying the distribution of $\g W$ is the graph with four unconnected nodes. The graph of the corresponding H\"usler--Reiss Pareto distribution $\g Y$ turns out to be the fully connected graph on the left-hand side of Figure \ref{graphs_ex}. 
    \item
     Consider the centred normal distribution $\g W$ with precision matrix and variogram matrix
     \[
    \Sigma^{-1} = 
    \begin{bmatrix}
      12 & -4 & -4 & -1 \\
      -4 & 2 & 1 & 0 \\
      -4 & 1 & 2 & 0 \\
      -1 & 0 & 0 & 1 
    \end{bmatrix},
    \qquad 
    \Gamma = 
    \begin{bmatrix}
      0 & 1 & 1 & 1 \\
      1 & 0 & 2 & 2 \\
      1 & 2 & 0 & 2 \\
      1 & 2 & 2 & 0 
    \end{bmatrix},
    \]
    respectively.
    The Gaussian graphical model is the graph in the centre of Figure \ref{graphs_ex} with an additional edge between the nodes $2$ and $3$. On the contrary, the corresponding H\"usler--Reiss model factorizes according to the graph in the centre of Figure \ref{graphs_ex}.
    \item
      Consider the centred normal distribution $\g W$ with precision matrix and variogram matrix
      \[
      \Sigma^{-1} = 
    \begin{bmatrix}
      2 & -0.5 & -0.5 & 0 \\
      -0.5 & 1 & 0 & -0.5\\
      -0.5 & 0 & 1 & -0.5 \\
      0 & -0.5 & -0.5 & 1 
    \end{bmatrix},
    \qquad 
    \Gamma = 
    \begin{bmatrix}
      0 & 1.5 & 1.5 & 2 \\
      1.5 & 0 & 2 & 1.5 \\
      1.5 & 2 & 0 & 1.5 \\
      2 & 1.5 & 1.5 & 0 
    \end{bmatrix},
    \]
    respectively.
    It can be checked that both the Gaussian and the  H\"usler--Reiss graphical model are as in the right-hand side of Figure \ref{graphs_ex}. Also note that this graph is not decomposable.
  \end{enumerate}
\end{example}

\begin{figure}[ht]
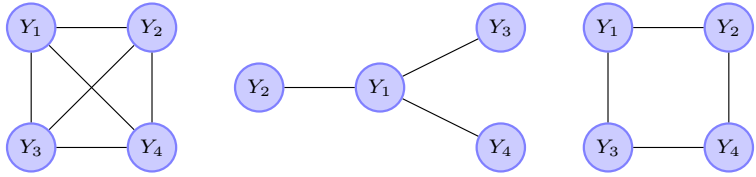

\centering
{\includegraphics[clip,height=3cm, page=1]{HR_graphs.pdf}}%
{\includegraphics[clip,height=3cm, page=2]{HR_graphs.pdf}}%
{\includegraphics[clip,height=3cm, page=3]{HR_graphs.pdf}}%
\caption{The H\"usler--Reiss graphical models described in Example \ref{ex_sig}.}
\label{graphs_ex}
\end{figure}

The above examples show that it is not possible to simply transfer the Gaussian graphical model of the covariance matrix $\Sigma$ in the construction~\eqref{eq:HRmax}
to the extremal graphical structure of the corresponding H\"usler--Reiss Pareto distribution. This is not surprising since the covariance matrices in the set $\mathcal S_\Gamma$ can have very different graph structures, but all lead to the same H\"usler--Reiss graphical model.
There is however a set of particular matrices that allow us to identify conditional independencies and thus the graphical structure of a H\"usler--Reiss Pareto distribution.
Recall the definition of $\Sigma^{(k)}\in\mathbb R^{(d-1)\times (d-1)}$ in~\eqref{sigma_k}. The same matrix including the $k$th row and column 
\begin{align}\label{full_sigma}
  \tilde \Sigma^{(k)}  =\frac{1}{2} \{\Gamma_{ik}+\Gamma_{jk}-\Gamma_{ij}\}_{i,j\in V} \in\mathbb R^{d\times d},
\end{align}
is degenerate since the $k$th component has zero variance, but it is a valid choice in the construction~\eqref{eq:HRmax}, that is, $\tilde \Sigma^{(k)} \in \mathcal S_\Gamma$, for any $k\in V$. Let $\g W^k$ be a centred normal distribution with covariance matrix $\tilde \Sigma^{(k)}$ and note that $W^k_k = 0$ almost surely.
For a random variable $P$ with standard Pareto distribution, independent of $\g W^k$, it can be seen that 
\begin{align}\label{Yk_rep}  \g Y^k  \stackrel{d}{=} P \exp\left\{ \g W^k - \Gamma_{\cdot\, k} / 2 \right\},
\end{align}
by comparing the density of the right-hand side with~\eqref{eq:fYHR} and noting that $\diag(\tilde \Sigma^{(k)}) = \Gamma_{\cdot\, k}$. This together with the original definition of conditional independence in~\eqref{eq:citail3} suggests that the matrices $\Sigma^{(k)}$ contain the graphical structure of 
a H\"usler--Reiss Pareto distribution.

We denote the precision matrix of $\Sigma^{(k)}$ by $\Theta^{(k)} = (\Sigma^{(k)})^{-1}$. For notational convenience, the indices of the matrices $\Sigma^{(k)}$ and $\Theta^{(k)}$ range in $\{1,\dots, d\}\setminus \{k\}$ instead of $\{1,\dots, d-1\}$.

\begin{lemma}\label{cor_HR}
  For $k,k'\in V$, $k \neq k'$, the precision matrices $\Theta^{(k)}$ and $\Theta^{(k')}$
  satisfy for $i,j \in V \setminus \{k'\}$ 
  \begin{align*}
    \Theta^{(k')}_{ij}&= \Theta^{(k)}_{ij},   &\text{ if }  i,j \neq  k,\\
    \Theta^{(k')}_{ik}&=  -\sum_{l\neq k} \Theta^{(k)}_{il} , &\text{ if } i\neq k, j = k,\\
    \Theta^{(k')}_{kk} &=  \sum_{l,m \neq k} \Theta^{(k)}_{lm} , & \text{ if } i,j = k.
  \end{align*}  
\end{lemma}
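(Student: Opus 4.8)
The plan is to realise each $\Sigma^{(k)}$ as a two-sided product involving $\Gamma$, to pass from one root to another by an explicit invertible congruence, and then to invert and read off the entries. As a preliminary step, write $\g e_i$ for the $i$th standard basis vector (of whichever dimension is dictated by context) and, for $k\in V$, let $A^{(k)}$ be the $(d-1)\times d$ matrix whose row indexed by $i\in V\setminus\{k\}$ equals $(\g e_i-\g e_k)^\top$. Since $\diag(\Gamma)=\g 0$ and $\Gamma$ is symmetric, for $i,j\in V\setminus\{k\}$,
\begin{align*}
  \big(A^{(k)}\Gamma(A^{(k)})^\top\big)_{ij}=(\g e_i-\g e_k)^\top\Gamma(\g e_j-\g e_k)=\Gamma_{ij}-\Gamma_{ik}-\Gamma_{jk}=-2\,\Sigma^{(k)}_{ij},
\end{align*}
so $\Sigma^{(k)}=-\tfrac12 A^{(k)}\Gamma(A^{(k)})^\top$ for every $k$. (Probabilistically this is just $\Sigma^{(k)}_{ij}=\operatorname{cov}(W_i-W_k,W_j-W_k)$ for any centred vector $\g W$ with variogram $\Gamma$, for instance $\g W\sim\mathcal N(\g 0,\tilde\Sigma^{(k)})$; the display above is this identity written out for an arbitrary such $\g W$.)

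The second step relates $A^{(k')}$ to $A^{(k)}$ by row operations. From $\g e_i-\g e_{k'}=(\g e_i-\g e_k)-(\g e_{k'}-\g e_k)$ we get $A^{(k')}=T A^{(k)}$, where $T\in\mathbb R^{(d-1)\times(d-1)}$ has rows indexed by $V\setminus\{k'\}$ and columns by $V\setminus\{k\}$, with $T_{ii}=1$, $T_{ik'}=-1$ for $i\in V\setminus\{k,k'\}$, $T_{kk'}=-1$, and all remaining entries zero. Interchanging the roles of $k$ and $k'$ gives a matrix $S$ with rows indexed by $V\setminus\{k\}$, columns by $V\setminus\{k'\}$, entries $S_{ii}=1$, $S_{ik}=-1$ for $i\in V\setminus\{k,k'\}$, $S_{k'k}=-1$, and zeros elsewhere; a short check shows $ST=I$, so $S=T^{-1}$. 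Substituting into the previous identity,
\begin{align*}
  \Sigma^{(k')}=-\tfrac12 A^{(k')}\Gamma(A^{(k')})^\top=T\big(-\tfrac12 A^{(k)}\Gamma(A^{(k)})^\top\big)T^\top=T\,\Sigma^{(k)}\,T^\top,
\end{align*}
and since $\Sigma^{(k)}$ and $\Sigma^{(k')}$ are invertible (strictly positive definite, as noted after~\eqref{sigma_k}), inversion gives $\Theta^{(k')}=(\Sigma^{(k')})^{-1}=S^\top\Theta^{(k)}S$.

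Finally I would read off the three cases from $S^\top\Theta^{(k)}S$, where the explicit form of $S$ does the work: its column indexed by $i\in V\setminus\{k,k'\}$ is the coordinate vector $\g e_i$ over the index set $V\setminus\{k\}$, while its column indexed by $k$ is $-\g 1$ (the all-ones vector of dimension $d-1$). Hence, for $i,j\in V\setminus\{k'\}$, the entry $\Theta^{(k')}_{ij}$ is the bilinear form of $\Theta^{(k)}$ on the corresponding columns of $S$: it equals $\g e_i^\top\Theta^{(k)}\g e_j=\Theta^{(k)}_{ij}$ when $i,j\neq k$; it equals $-\g e_i^\top\Theta^{(k)}\g 1=-\sum_{l\neq k}\Theta^{(k)}_{il}$ when $i\neq k$, $j=k$; and it equals $\g 1^\top\Theta^{(k)}\g 1=\sum_{l,m\neq k}\Theta^{(k)}_{lm}$ when $i=j=k$. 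These are precisely the three asserted identities.

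There is no deep obstacle: the argument is elementary linear algebra once the congruence $\Sigma^{(k')}=T\Sigma^{(k)}T^\top$ is in place. The only point demanding care is keeping the two index sets $V\setminus\{k\}$ and $V\setminus\{k'\}$ apart throughout — in particular, making sure the distinguished row of $T$ carries the label $k$, the distinguished column of $S=T^{-1}$ carries the label $k$, and that $S=T^{-1}$ really holds with these conventions. A more computational alternative would be to relate $(\Sigma^{(k')})^{-1}$ to $(\Sigma^{(k)})^{-1}$ directly by block/Schur-complement manipulations, but the congruence route is considerably more transparent.
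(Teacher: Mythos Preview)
Your proof is correct and takes a genuinely different route from the paper. The paper fixes $k'=1$, $k=2$, writes $\Sigma^{(1)}$ and $\Sigma^{(2)}$ explicitly as $2\times 2$ block matrices in terms of the entries of $\tilde\Sigma^{(1)}$, inverts each via the Schur complement of the top-left block (observing that the two Schur complements coincide), and then compares the resulting block expressions for $\Theta^{(1)}$ and $\Theta^{(2)}$ entry by entry. Your argument instead encodes the passage from root $k$ to root $k'$ as an explicit congruence $\Sigma^{(k')}=T\Sigma^{(k)}T^\top$ with an elementary $T$, inverts once to get $\Theta^{(k')}=S^\top\Theta^{(k)}S$, and reads the three cases off the columns of $S$. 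The congruence approach is more structural: it makes transparent that the change of root is a change of basis on the space of increments $W_i-W_k$, and it avoids the bookkeeping of two separate block inversions. The paper's Schur-complement route, by contrast, is self-contained and does not require identifying and verifying the inverse matrix $S$, but is computationally heavier. You anticipated this alternative in your final remark; that is indeed what the paper does.
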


The above lemma is of independent interest since it explains the link between the precision
matrices $\Theta^{(k)}$ for different different $k\in V$. The proof uses blockwise inversion
of the precision matrices.
This result is the crucial ingredient to characterise conditional independence in H\"usler--Reiss models.
\begin{proposition}\label{prop:CI_HR}
  For a H\"usler--Reiss Pareto distribution $\g Y$ with parameter matrix $\Gamma$, it holds for $i,j\in V$ with $i\neq j$, and for any $k\in V$, that
  \begin{align}\label{eq:CI_HR}
    Y_i\perpe Y_j\mid \g Y_{\setminus \{i,j\}} \quad \iff \quad
    \begin{cases}
       \Theta^{(k)}_{ij}= 0,  &\text{ if } i,j \neq k,\\
       \sum_{l\neq k} \Theta^{(k)}_{lj} = 0, & \text{ if } i = k, j\neq k,\\
       \sum_{l\neq k} \Theta^{(k)}_{il} = 0, & \text{ if } j = k, i \neq k. 
    \end{cases} 
  \end{align}
\end{proposition}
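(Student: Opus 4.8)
The plan is to transport the extremal conditional independence $Y_i\perpe Y_j\mid \g Y_{\setminus \{i,j\}}$ to an ordinary Gaussian conditional independence by means of the stochastic representation~\eqref{Yk_rep}, and then to pass between the matrices $\Theta^{(k)}$ for different $k$ using Lemma~\ref{cor_HR}. Throughout I would assume $d\geq 3$, so that the conditioning set $\g Y_{\setminus\{i,j\}}$ is non-empty.

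First I would fix an auxiliary index $k'\in V\setminus\{i,j\}$, choosing $k'=k$ whenever $k\notin\{i,j\}$. By Proposition~\ref{prop_equiv}(i) applied with $A=\{i\}$, $B=V\setminus\{i,j\}$, $C=\{j\}$, the relation $Y_i\perpe Y_j\mid \g Y_{\setminus\{i,j\}}$ is equivalent to the classical conditional independence $Y^{k'}_i\ci Y^{k'}_j\mid \g Y^{k'}_{\setminus\{i,j\}}$ on the product space $\mathcal L^{k'}$, and by Definition~\ref{CI_def} the validity of the latter does not depend on which $k'\in B$ is chosen. By~\eqref{Yk_rep} one has $\g Y^{k'}\stackrel{d}{=}P\exp\{\g W^{k'}-\Gamma_{\cdot\,k'}/2\}$, with $P$ standard Pareto independent of the centred Gaussian vector $\g W^{k'}$ of covariance $\tilde\Sigma^{(k')}$ and $W^{k'}_{k'}=0$ almost surely, so that $Y^{k'}_{k'}=P$. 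Since $k'\notin\{i,j\}$, the coordinate $Y^{k'}_{k'}=P$ is part of the conditioning vector $\g Y^{k'}_{\setminus\{i,j\}}$; for fixed $P$ the map $w\mapsto P\exp(w-\Gamma_{lk'}/2)$ is a bijection for each $l\neq k'$, so conditioning on $\g Y^{k'}_{\setminus\{i,j\}}$ amounts to conditioning on $(P,(W^{k'}_l)_{l\neq i,j,k'})$, given which $Y^{k'}_i$ and $Y^{k'}_j$ are bijective functions of $W^{k'}_i$ and $W^{k'}_j$, respectively. Using the independence of $P$ and $\g W^{k'}$, it follows that $Y^{k'}_i\ci Y^{k'}_j\mid \g Y^{k'}_{\setminus\{i,j\}}$ is equivalent to $W^{k'}_i\ci W^{k'}_j\mid (W^{k'}_l)_{l\neq i,j,k'}$. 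Since the subvector $(W^{k'}_l)_{l\neq k'}$ is centred Gaussian with strictly positive definite covariance matrix $\Sigma^{(k')}$, Example~\ref{gauss} turns this into the condition $\Theta^{(k')}_{ij}=(\Sigma^{(k')})^{-1}_{ij}=0$.

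It then remains to rewrite $\Theta^{(k')}_{ij}=0$ in terms of an arbitrary $\Theta^{(k)}$, $k\in V$. If $k\notin\{i,j\}$ the choice $k'=k$ already gives the first line of~\eqref{eq:CI_HR}. If $k=i$, then $k'\neq i$, and the second identity of Lemma~\ref{cor_HR} applied to $\Theta^{(i)}$ and $\Theta^{(k')}$ gives, using symmetry of both matrices, $\Theta^{(k')}_{ij}=-\sum_{l\neq i}\Theta^{(i)}_{lj}$, so $\Theta^{(k')}_{ij}=0$ is equivalent to $\sum_{l\neq i}\Theta^{(i)}_{lj}=0$, which is the second line of~\eqref{eq:CI_HR}; the case $k=j$ is identical after exchanging the roles of $i$ and $j$.

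The step requiring the most care is the conditioning argument in the second paragraph: one must check rigorously that conditioning on $\g Y^{k'}_{\setminus\{i,j\}}$ eliminates the common Pareto factor $P$ and leaves a purely Gaussian conditional independence problem. Concretely this comes down to verifying that the map $(p,\g w_{\setminus k'})\mapsto \g x$ given by $x_{k'}=p$ and $x_l=p\,e^{w_l-\Gamma_{lk'}/2}$ for $l\neq k'$ is a diffeomorphism from $(1,\infty)\times\mathbb R^{V\setminus\{k'\}}$ onto $\mathcal L^{k'}$ up to a Lebesgue-null set, and that it is triangular in the relevant variables, $x_i$ depending only on $(p,w_i)$ and $x_j$ only on $(p,w_j)$, so that the conditional independence relation is faithfully transported. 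Once this is in place, the remaining steps are routine applications of Proposition~\ref{prop_equiv}, Example~\ref{gauss}, and Lemma~\ref{cor_HR}.
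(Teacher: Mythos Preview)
Your proposal is correct and follows essentially the same route as the paper's proof: both reduce the extremal conditional independence to a classical Gaussian one via the representation~\eqref{Yk_rep}, invoke the zero-in-the-precision-matrix criterion of Example~\ref{gauss}, and then use Lemma~\ref{cor_HR} to translate the condition $\Theta^{(k')}_{ij}=0$ into the row-/column-sum condition for $\Theta^{(k)}$ when $k\in\{i,j\}$. Your treatment of the diffeomorphism step is in fact more explicit than the paper's, which simply records the chain of equivalences without spelling out the bijection argument.
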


For any $k \in V$, the single matrix $\Theta^{(k)}$ contains all information on conditional independence of~$\g Y$. Conditional independence concerning the $k$th component
is encoded in the row and column sums of $\Theta^{(k)}$, and it might sometimes be easier to switch
to another representation $\Theta^{(k')}$, $k'\neq k$, where it simply figures as a zero entry.
In Example \ref{ex_sig} we can now easily determine the graphical model $\mathcal G = (V,E)$ for each of the three H\"usler--Reiss Pareto distributions. For a given $\Sigma$ we first compute the matrix $\Gamma$ as in~\eqref{gamma_set}, then transform it by~\eqref{sigma_k} to obtain $\Sigma^{(k)}$ for any $k\in V$ and use Proposition~\ref{prop:CI_HR} to decide whether $(i,j) \in E$ for all $i,j\in V$.
These transformations are implemented in our R-package \texttt{graphicalExtremes} \citep{graphicalExtremes}.

\begin{example}
  In spatial extreme value statistics, the finite dimensional distributions of the Brown--Resnick process \citep{kab2009} at locations $t_1,\dots, t_d\in \mathbb R^D$ are H\"usler--Reiss distributed with
  variogram matrix $\Gamma_{ij} = \gamma(t_i - t_j)$, $i,j\in\{1,\dots, d\}$, where $\gamma$ is a
  variogram function on $\mathbb R^D$. The most commonly used model is the fractal variogram
  family $\gamma_\alpha(h) = \| h\|_2^\alpha$, for some $\alpha\in (0,2]$. The corresponding
  $d$-variate H\"usler--Reiss distribution does not have conditional independencies and
  its graph is thus fully connected. The only exception is the case of the original
  Brown--Resnick process in \cite{bro1977} with $\alpha=1$ and $D=1$, where the corresponding
  graph is a chain as in Example \ref{time_series}.  
\end{example}

In this section, we have so far not required that the underlying graph $\mathcal G$ is decomposable. If this is the
case then, as shown in Example \ref{HR_stable}, Theorem \ref{thm:equivFactorHomog} 
implies that the density of the H\"usler--Reiss graphical model factorizes into lower-dimensional
H\"usler--Reiss densities; see Corollary \ref{cor_HR_dens} in Appendix~\ref{app_decomp}.

\section{Statistical inference for block graphs}
\label{sec:stat}

\subsection{Model construction}
\label{construction}

The notion of conditional independence and graphical models for multivariate Pareto distributions
allows the construction of new statistical models with two major advantages. First, sparsity can be imposed
on the model, which is a crucial ingredient for tractable and parsimonious models in higher dimensions.
Second, under certain graphical structures, the model parameters can be estimated separately on lower-dimensional subsets of the data.

We consider here, and throughout the rest of the paper, decomposable and connected graphs $\mathcal G=(V,E)$ with clique set $\mathcal C$ and separator set $\mathcal D$, where all separators in $\mathcal D$ are single nodes.
Such graph structures with singleton separator sets are known as block graphs \citep[cf.,][]{har1963} and have already been seen to have appealing properties for discrete
data \citep{loh2013}. In our case, they are a convenient way of restricting the model complexity in order to obtain a tractable class of extremal graphical models. In fact, Corollary \ref{cor_construction} provides a simple construction principle for multivariate Pareto distributions that factorize with respect to the block graph $\mathcal G$.
\begin{itemize}
  \item[i)]
    For each clique $C\in \mathcal C$, choose possibly different parametric families of valid exponent measure densities $\{\lambda_C(\cdot; \theta_C) : \theta_C \in \Omega_C\}$ for suitable parameter spaces $\Omega_C$. If $\mathcal G$ is a tree $\mathcal T$, then this reduces to choosing $d-1$ bivariate
    exponent measure densities $\lambda_{ij}$, for each $\{i,j\} \in E$; see Example \ref{ex_biv} for a general representation of such densities.
  \item[ii)]
    Since all separator sets consist of a single node, the consistency
constraint~\eqref{eq_consistency} is trivially fulfilled 
as a consequence of (L1) and (L2) in Section \ref{mevd} and the fact that $\lambda_D(y_D) = y_D^{-2}$ for all $D\in \mathcal D$. 
  \item[iii)]
    For any fixed combination of parameters $\theta = (\theta_C)_{C\in \mathcal C} \in \Omega = \times_{C\in \mathcal C} \Omega_C$, the product of the normalised lower-dimensional exponent measure densities, 
\begin{align}\label{graph_dens}
  f_{\g Y}(\g y; \theta)= \frac{1}{\Lambda(\g 1; \theta)}\prod_{C\in \mathcal C}{ \lambda_C(\g y_C; \theta_C) \over \prod_{j \in C} y_j^{-2}} \prod_{i\in V} y_i^{-2} ,\quad \g y\in \mathcal L,
\end{align}
defines a valid $d$-variate Pareto distribution factorizing according to the graph $\mathcal G$, which is a member of the parametric family parameterized by $\theta \in \Omega$. For a tree $\mathcal T$, this reduces to the density in~\eqref{tree_fact}.
\end{itemize}

Concrete examples for this construction are tree logistic or tree H\"usler--Reiss models as described in
Example~\ref{ex_tree}, where all cliques have the same type of distributions. The above
construction is much more flexible, as it allows us to use different distribution families for the
different cliques. Moreover, some, or even all of the cliques may be modeled by non-parametric
methods; see \cite{laf2012} for non-parametric tree models in the non-extreme case. In this direction, there is a line of research on kernel-based estimation of exponent measure densities \citep[cf.,][]{car2014, mar2017, kir2018a} that could be used as clique models. We will not follow this approach here.

In the graphical models above, the dependence inside each clique is modeled directly, whereas dependence between
components from different cliques is implicitly implied by the conditional independence structure of
the graph. Even if all cliques are modeled with the same type of parametric family, the joint distribution~\eqref{graph_dens} is typically not of this distribution type. For a tree logistic distribution, for instance, this can easily be seen by comparing its density~\eqref{tree_fact} with that of $d$-variate logistic distribution in Example \ref{ex_log}. The latter only has one parameter governing the whole $d$-dimensional dependence structure, whereas the tree has $d-1$ logistic parameters $\{\theta_{ij};\{i,j\}\in E\}$ and thus much higher flexibility. 

An important exception is the family of H\"usler--Reiss distributions, which is stable under taking marginal distributions; see Example \ref{HR_stable}. The following proposition shows that for a given graphical structure as above, if all cliques have H\"usler--Reiss distributions, then so has the full $d$-dimensional model. This is the converse of Corollary \ref{cor_HR_dens}
in Appendix \ref{app_decomp}.         

\begin{proposition}\label{mat_compl}
  Let $\mathcal G = (V,E)$ be a block graph as above, and suppose that on each clique $C\in \mathcal C$,
  $\g Y$ has a $|C|$-variate H\"usler--Reiss distribution with exponent measure
  density $\lambda_C(\cdot; \Gamma^{(C)})$ parameterized by a $|C|\times |C|$-dimensional variogram matrix $\Gamma^{(C)}$. Then there exists a unique solution to the problem:
  \begin{align}
    \notag & \text{find a } d\times d\text{-dimensional variogram matrix } \Gamma,\\
    \label{mat_cons}&\text{subject to } \begin{cases}
      \Gamma_{ij} = \Gamma^{(C)}_{ij} , & \text{ for } i,j\in C \text{ and all } C\in\mathcal C,\\
      \Theta^{(k)}_{ij} = 0, & \text{ for all }k\in V, i,j\neq k \text{ and } (i,j)\notin E,
    \end{cases}
  \end{align}
  with the notation from Proposition \ref{prop:CI_HR}. 
  The corresponding $d$-variate H\"usler--Reiss distribution factorizes according
  to the graph $\mathcal G$ into the lower-dimensional H\"usler--Reiss densities on the cliques.
\end{proposition}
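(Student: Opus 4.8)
The plan is to build the variogram matrix $\Gamma$ node-by-node along the block structure of $\mathcal G$, using the second set of constraints in~\eqref{mat_cons} to determine the entries $\Gamma_{ij}$ for pairs $i,j$ that do not lie in a common clique. First I would set up the natural bookkeeping: pick a root clique and order the cliques $C_1, C_2, \dots, C_m$ so that each $C_t$ intersects $\bigcup_{s<t} C_s$ in exactly one separator node; this is possible because $\mathcal G$ is a connected block graph. The entries $\Gamma_{ij}$ with $i,j$ in a common clique are prescribed by the first constraint, and consistency on overlapping cliques is automatic since in a block graph two distinct cliques overlap in at most a single node (where $\Gamma$ vanishes on the diagonal). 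The remaining entries are those $\Gamma_{ij}$ with $i\in C_s\setminus D$, $j\in C_t\setminus D$ for the separator $D=\{k_0\}$ separating $i$ from $j$; I would show that the conditional-independence constraints $\Theta^{(k)}_{ij}=0$ force a unique value for each such entry and that this value is consistent with the desired factorization.

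The key computational step is to translate the constraints $\Theta^{(k)}_{ij}=0$ for $(i,j)\notin E$ into explicit equations on the entries of $\Gamma$. Here I would invoke Proposition~\ref{prop:CI_HR} and Lemma~\ref{cor_HR}: it suffices to verify the conditional independence $Y_i\perpe Y_j\mid \g Y_{\setminus\{i,j\}}$, and by Proposition~\ref{prop_equiv}(ii) and Theorem~\ref{thm:equivFactorHomog} this is equivalent to the factorization~\eqref{eq:producthc} of $\lambda$ along $\mathcal G$. So the strategy becomes: \emph{construct} $\Gamma$ so that the induced H\"usler--Reiss density factorizes into the prescribed clique densities, then \emph{conclude} that the CI constraints hold and that $\Gamma$ is the unique such matrix. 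Concretely, I would define, for a separator node $k_0$ on the path between $i$ and $j$, the entry $\Gamma_{ij}$ by the ``additive along the tree of cliques'' rule that makes the bivariate marginal $\lambda_{ij}$ equal to the convolution-type expression dictated by~\eqref{eq:producthc}; in the H\"usler--Reiss case, using the Gaussian representation~\eqref{Yk_rep} with $\g W^{k_0}$, the conditional independence $W^{k_0}_i\ci W^{k_0}_j\mid W^{k_0}_{\text{rest}}$ translates into a linear relation among $\Gamma_{ik_0}$, $\Gamma_{jk_0}$ and $\Gamma_{ij}$ via the Schur complement, which pins down $\Gamma_{ij}$ uniquely. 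Iterating this across all separators fills in the whole matrix.

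I would then need to check two things: (a) the matrix $\Gamma$ so constructed is genuinely a valid variogram matrix, i.e.\ symmetric with zero diagonal, non-negative entries, and strictly conditionally negative definite; and (b) the resulting H\"usler--Reiss distribution indeed factorizes according to $\mathcal G$ into the given clique densities. For (b), once $\Gamma$ restricts correctly on each clique and satisfies all the $\Theta^{(k)}_{ij}=0$ constraints, Proposition~\ref{prop:CI_HR} gives the pairwise Markov property, hence by Theorem~\ref{thm:equivFactorHomog} the density factorizes into its H\"usler--Reiss clique marginals (which by the stability in Example~\ref{HR_stable} are exactly $\lambda_C(\cdot;\Gamma^{(C)})$ since $\Gamma_C=\Gamma^{(C)}$). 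For (a), positive definiteness of the relevant $\Sigma^{(k)}$ is the delicate point: I would argue it by exhibiting $\Gamma$ as the variogram of an explicit Gaussian vector built as a ``tree of conditionally independent Gaussian pieces'' --- attach to each clique $C$ a centred Gaussian with variogram $\Gamma^{(C)}$ and glue them at the shared separator nodes via conditional independence given the separator, in the manner of Markov random fields on block graphs; the variogram of this glued Gaussian field is then automatically conditionally negative definite and restricts correctly. Uniqueness follows because any admissible $\Gamma$ must satisfy the same Schur-complement linear relations, which determine every off-clique entry from the clique entries.

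The main obstacle I anticipate is step (a), specifically proving that the completed matrix is strictly conditionally negative definite (equivalently that each $\Sigma^{(k)}$ is strictly positive definite) rather than merely that a formal solution to the linear system exists. The gluing-of-Gaussians argument is the cleanest route, but one must be careful that conditioning on a single separator node --- which has positive variance in a generic $\Sigma\in\mathcal S_\Gamma$ but zero variance in the degenerate $\tilde\Sigma^{(k)}$ --- is handled consistently; I would work with a non-degenerate representative covariance matrix (e.g.\ $\Sigma^{(k)}$ with its small positive diagonal adjustment) so that ordinary Gaussian conditional-independence gluing applies, and only afterwards pass back to the variogram. A secondary, more routine obstacle is verifying non-negativity of the off-clique entries $\Gamma_{ij}$, which should follow from the fact that a variogram of a Gaussian field is automatically non-negative, so it is subsumed once the Gaussian construction is in place.
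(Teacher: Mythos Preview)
Your overall strategy---order the cliques, fix the prescribed entries, and fill in the cross-clique entries inductively using the conditional-independence constraints---matches the paper's proof. The difference is in how the inductive step is executed, and the paper's version dissolves exactly the obstacle you flag in~(a). Rather than solving Schur-complement equations for the unknown $\Gamma_{ij}$ and then separately arguing conditional negative definiteness via a glued Gaussian, the paper works directly in the precision-matrix parameterization anchored at the separator node $k_0$: it simply \emph{defines}
\[
  \Theta^{(I_p,k_0)} \;=\;
  \begin{bmatrix}
    \Theta^{(I_{p-1},k_0)} & 0 \\
    0 & \Theta^{(C_p,k_0)}
  \end{bmatrix},
\]
which is strictly positive definite because each block is, has the required zero pattern by construction, and maps back via the bijection $\varphi_{k_0}^{-1}$ of Appendix~\ref{link_vario} to a valid variogram with the explicit cross-clique entries $\Gamma_{ij}=\Gamma_{ik_0}+\Gamma_{jk_0}$. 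This is the same object as your ``tree of conditionally independent Gaussian pieces'' (the block-diagonal precision matrix is precisely that of the glued field in the $\tilde\Sigma^{(k_0)}$ representation), but framing it this way makes positive definiteness and uniqueness one-liners and avoids any juggling of degenerate versus non-degenerate covariance representatives. Your route would work, but the block-diagonal-$\Theta^{(k_0)}$ shortcut is worth internalising.
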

This is a matrix completion problem for variograms similar to what \cite{dem1972} introduced
for covariance matrices. In our case, the graph is decomposable and the above result relates to
the marginal problem studied in \cite{kel1964} and \cite{daw1993}. For H\"usler--Reiss marginals
on block graphs we even see that the implied $d$-dimensional distribution is again H\"usler--Reiss.
We give a direct, constructive proof in Appendix \ref{appendix_proofs}.
  This provides a method to construct high-dimensional H\"usler--Reiss distributions
  out of many low-dimensional ones.
  The full $d$-variate H\"usler--Reiss model without any conditional independencies has $d(d-1)/2$ parameters. A H\"usler--Reiss distribution as in Proposition \ref{mat_compl} that factorizes on a block graph with click set $\mathcal C$ has only
  $$ \dfrac{1}{2}\sum_{C \in \mathcal C} |C|(|C| -1)$$
  parameters, which can be much smaller than $d(d-1)/2$.

\subsection{Estimation}
\label{estimation}

Extremal graphical models can be used to build parsimonious statistical models for the tail of
a multivariate random vector. In this section we discuss how the model parameters can be estimated efficiently by considering each clique distribution separately.

Let $\g X = (X_j)_{j\in V}$, $V=\{1,\dots, d\}$, be a random vector in the max-domain
of attraction of the max-stable random vector $\g Z$ as in~\eqref{mrv}, with marginal distribution $X_j$ in the max-domain of attraction of a generalized extreme value distribution with shape parameter $\xi_j$, $j\in V$. Equivalently, there exist a sequence of high thresholds $\g t_u = (t_{u1}, \dots, t_{ud})$ with $t_{uj}$ tending to the upper endpoint of $X_j$ as $u\to \infty$, and positive normalizing functions $\sigma_u = (\sigma_{u1}, \dots, \sigma_{ud})$, such that the distribution of exceedances converges weakly
\begin{align}
   \frac{\g X - \g t_u}{\sigma_u}  \mid \| \g X / \g t_u \|_\infty > 1  \longrightarrow  \frac{\g Y^\xi-1}{\xi}, \quad u \to \infty,
\end{align}
where $\g Y$ is the multivariate Pareto distribution associated with $\g Z$. We assume $\g Y$
to be in the model class of the previous section with density~\eqref{graph_dens}, and for now
we suppose that the underlying graph $\mathcal G = (V,E)$ is known and fixed. The conditional density 
of $\g X - \g t_u$ given that $\| \g X / \g t_u\|_\infty > 1$ is then approximated by
\begin{align}
  f_{\g Y}\left\{ \left(1+ \xi \frac{\g x}{\sigma_u} \right)^{1/\xi}; \theta \right\} \prod_{j\in V} \frac{1}{\sigma_{uj}} \left(1+ \xi_j \frac{x_j}{\sigma_{uj}} \right)^{1/\xi_j-1}.
\end{align}
This density can be used to estimate jointly the marginal parameters $(\sigma_{uj}, \xi_j)$, $j\in V$, and the dependence parameter vector $\theta = (\theta_C)_{C\in \mathcal C}$ of $f_{\g Y}$.

In the sequel we concentrate on estimation of the dependence, and we therefore assume that
the marginal parameters are known or have been estimated separately. As described in Section~\ref{mevd}, we can then normalise $\g X$ to standard Pareto marginals, in which case $\xi_j=1$, $t_{uj} = u$ and $\sigma_{uj} = u$ for all $j\in V$. We recover the standardized setting of~\eqref{mpd} considered throughout the paper, where $\g X/ u$ given that $\| \g X \|_\infty > u$ converges  to $\g Y$, whose likelihood is proportional as a function of $\theta$ to
\begin{align}\label{llh_contr}
  & f_{\g Y}(\g y; \theta) \propto  \frac{1}{Z_\theta} \prod_{C\in \mathcal C} \frac{\lambda_{C}(\g y_C; \theta_C)}{\Lambda_C(\g 1; \theta_C)}, \quad  Z_\theta =  \frac{\Lambda(\g 1; \theta)}{\prod_{C\in \mathcal C} \Lambda_{C}(\g 1; \theta_C)}.
\end{align}
Direct maximization of the likelihood with contributions~\eqref{llh_contr} for each data point
is tedious since the normalizing constant $Z_\theta$ contains all parameters and does not factorize.
Fortunately the class of block graphs has the property that we can estimate the parameters $\theta_C$ of each
$\lambda_C$ separately, without having to enforce the consistency constraints at the separator sets.
In fact, we use the following observation. If $\g X$ is in the domain of attraction of the
family of multivariate Pareto distributions $\{f_{\g Y}(\cdot; \theta) : \theta \in \Omega\}$,
then for a fixed clique $C\in \mathcal C$, the subvector $\g X_C$ is in the domain
of attraction of $\{f_{C}(\cdot; \theta_C) : \theta_C \in \Omega_C\}$, and the distribution of the normalised exceedance $\g X_C / u \mid \|\g X_C\|_\infty > u$ is approximated for large $u$ by $\g Y_C$ with density
\begin{align}\label{clique_contr}
  f_{C}(\g y_C; \theta_C)= \frac{\lambda_C(\g y_C; \theta_C) }{\Lambda_C(\g 1; \theta_C)}  ,\quad \g y_C\in \mathcal L_C;
\end{align}
see~\eqref{margin_pareto} in Section \ref{mevd}.
We can therefore obtain an estimate of $\theta_C$ based only on data of the components in $C$,
whose dimension is typically much smaller than the dimension $d$ of the full graph.
Estimating the cliques separately might in principle result in a loss of
estimation efficiency compared to using the joint likelihood~\eqref{llh_contr}.
The normalizing constant $Z_\theta$ does however not contain much information
on the parameter $\theta$ and the maximum likelihood estimate
using $f_{\g Y}(\g y; \theta)$ is generally very close to the estimate obtained
by maximizing separate likelihoods based on~\eqref{clique_contr}. We discuss this
point in the simulation study in Section~\ref{simulation_study}.

In practice, some components of $\g X$ might not have converged to the limiting distribution~$\g Y$.
In order to avoid biased estimates of the dependence parameters $\theta_C$, it has become
a standard approach to apply censoring to the data; see  \cite{led1997}, \cite{smi1997}. For a data point $\g X_C$ with $\|\g X_C\|_\infty > u$ for a high threshold $u>0$, define $J$ to be the set of indices $j\in C$ such that $Y_j < 1$, i.e., $X_j < u$. For this data point we use the censored likelihood contribution
\begin{align}\label{cens_contr}
  f_{C}^{\cens}(\g y_C; \theta_C) = \int_{[0,1]^{|J|}} f_{C}(\g y_C; \theta_C) \mathrm d \g y_J ,\quad \g y_C\in \mathcal L_C,
\end{align}
which uses for all $j\in J$ only the information that this component of $\g Y_C$ is smaller than $1$, but not its exact value.
For explicit forms of the censored likelihoods for many parametric models see \cite{dom2016a} and \cite{kir2018}.

For $n$ independent data $\g y^{(h)}\in \mathcal L$, $h=1,\dots, n$, of $\g X / u \mid \|\g X\|_\infty > u$,
for each clique $C$ we define $\widehat \theta_C$ as the maximizer of the censored log-likelihood
\begin{align}\label{clique_llh}
  L(\theta_C; \g y^{(1)}, \dots, \g y^{(n)}) = \sum_{\g y^{(h)}\in \mathcal L_C} \log\{ f_{C}^{\cens}(\g y_C^{(h)}; \theta_C) \},
\end{align}
where $\mathcal L_C =\{\g y\in \mathcal L: \exists j\in C \text{ s.t. } y_j > 1 \}$,
and each $\g y^{(h)}_C$ has its own censoring set $J^{(h)}\subset C$.

Maximum likelihood estimation is only one possibility to infer the parameters $\theta_C$ based on exceedances of $\g X_C$ and the limiting distribution \eqref{clique_contr}. Alternative methods use $M$-estimators \citep{ein2012, ein2016} or proper scoring rules \citep{def2018}.

\subsection{Model selection}
\label{model_selection}

Up to now we have assumed that a graphical structure $\mathcal G$ was \textit{a priori} given and we analysed models that factorize with respect to this structure. 
In many applications the underlying graph structure is unknown and should be
learned in a data-driven way.
Theorem \ref{thm:equivFactorHomog} implies that all extremal graphical structures are connected,
and a simple and flexible class of connected graphs are trees; see Section \ref{trees}.
It is thus natural to first build a suitable tree as a baseline model, and then extend
the tree by adding additional edges in order to obtain more complex graphs.

Since trees are a special case of general graphical models, there are specific methods to learn these simpler structures. The notion of a minimum spanning tree is crucial \citep{kruskal1956shortest}. Let $\mathcal G_0 = (V, E_0)$ be
the fully connected graph on $V=\{1,\dots, d\}$ with edge set $E_0 = \{(i,j): i,j\in V\}$. Suppose that a positive weight $w_{ij}>0$ is attached to each edge $(i,j)\in E_0$ of $\mathcal G_0$. This number can be seen as the length of the edge $(i,j)$ or the distance between nodes $i$ and $j$, and it is assumed that $w_{ij} = w_{ji}$ and $w_{ii}=0$, $i,j\in V$. The minimum spanning tree is the
tree $\mathcal T_{\MST} = (V,E_{\MST})$ with $E_{\MST}\subset E_0$, that minimizes the sum of weights on that tree, i.e.,
\begin{align}\label{Tmin}
  \mathcal T_{\MST} = \argmin_{\mathcal T = (V,E)} \sum_{(i,j)\in E} w_{ij}. 
\end{align}
If all edges of $\mathcal G_0$ have distinct lengths, then $\mathcal T_{\MST}$ is unique.
This minimization problem can be solved efficiently by the greedy algorithms proposed
in \cite{kruskal1956shortest} or \cite{pri1957}.

The weights $w_{ij}$ determine the tree structure and should be chosen carefully.
A common approach in graphical modelling is to search the conditional independence structure that maximizes the likelihood, \citep[cf.,][Chapter 11]{cow2006}. Such a tree is also called a Chow--Liu tree \citep{cho1968}.
We fix a parametric family of bivariate Pareto distributions that is used for all
pairs of nodes $\{f(\cdot; \theta_{ij}) : \theta_{ij} \in \Omega\}$. For $n$ independent data $\g y^{(h)}$, $h=1,\dots, n$, the maximal log-likelihood
of a fixed tree within this parametric class is essentially the sum over
the maximized clique log-likelihoods in~\eqref{clique_llh} over all edges of this tree. 
In order to find the tree that maximizes the log-likelihood over all trees and all distributions
in this parametric family, we therefore find the minimum spanning tree in~\eqref{Tmin}
with weights
\begin{align}\label{Tweights}
  w_{ij} = - L(\widehat \theta_{ij}; \g y^{(1)}, \dots, \g y^{(n)}) - 2\sum_{y_i^{(h)} > 1} \log y_i^{(h)} - 2\sum_{y_j^{(h)} > 1} \log y_j^{(h)} ,
\end{align}
where we include the censored marginal densities $y_i^{-2}$ and $y_j^{-2}$ in~\eqref{graph_dens}
for the clique $\{i,j\}$, since now the edges are no longer fixed but parameters of the optimization.
The resulting tree ${\mathcal{T}}_{\MST}$ is the baseline model for the data.
If the model fit is not satisfactory, it is possible to extend this tree to graphs
with more complex structures by adding additional edges.
The family of H\"usler--Reiss distributions is particularly appealing since the bivariate marginals
remain in the same class.
We illustrate this model extension through a greedy forward selection
in Section~\ref{simulation_study}.

The different multivariate Pareto models can then be compared by the Akaike information criterion \citep{kir2018}, 
\begin{align}\label{AIC}
  \AIC = 2p - 2 L(\widehat \theta; \g y^{(1)}, \dots, \g y^{(n)}),
\end{align}
where $p$ is the number of parameters in the respective model, and the second term
is twice the negative log-likelihood based on the censored version of~\eqref{llh_contr}, evaluated at the optimized parameters of each clique. 

\subsection{Exact simulation}\label{simulation}

Exact simulation of a max-stable random vector $\g Z$ relies on the notion of
extremal functions \citep{dom2013a}. The extremal function of $\g Z$, or of its associated multivariate Pareto distribution $\g Y$, relative to coordinate $k\in V$ is the $d$-dimensional random vector $\g U^k$ with $U^k_k =1$ such that the exponent measure density
of $\g Z$ can be written as
\begin{align}\label{def_ef}
  \lambda(\g y) = y_k^{-(d+1)} f_{\g U^k_{\setminus k}}(\g y_{\setminus k} / y_k).
\end{align} 
The distributions of the extremal functions $\g U^k$, $k\in V$, for most commonly used models have explicit forms and are derived in Section 4 of \cite{dom2016}. Theorem 2 in the same paper relates the distribution of the so-called spectral measure to these extremal functions. Together
with the following representation of $\g Y$, this enables
simulation of multivariate Pareto distributions by rejection sampling. Recall that for any $k\in V$, the random vector $\g Y^k$ is defined as $\g Y$ conditioned on the event that $\{Y_k > 1\}$.
\begin{lemma}\label{lem_extr_func}
  The distribution of the extremal function $\g U^k$ of $\g Y$ relative to coordinate $k\in V$ is given by the distribution of $\g Y^k / Y^k_k$. Independently, let $P$ be a standard Pareto random variable and $T$ uniformly distributed on $\{1,\dots, d\}$. We then have for any Borel set $A \subset \mathcal L$
  \begin{align}\label{Y_rep}
\mathbb P(\g Y \in A) = \mathbb P\left( \frac{P\g Y^T}{\| \g Y^T\|_1} \in A  \,\bigl\vert\,   \frac{ P \|\g Y^T\|_\infty}{\| \g Y^T\|_1} > 1  \right).
  \end{align}
\end{lemma}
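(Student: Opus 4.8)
The plan is to establish the two assertions of the lemma separately; both rest on the homogeneity~(L1) of $\lambda$. For the first assertion, recall from~\eqref{fk_def} that $\g Y^k$ has density $f^k=\lambda$ on $\mathcal L^k=\{\g x\in\mathcal E:x_k>1\}$. I would apply the change of variables $(y_k,\g y_{\setminus k})\mapsto(r,\g u)=(y_k,\g y_{\setminus k}/y_k)$, whose Jacobian has absolute value $r^{d-1}$, so that the pair $(R,\g U):=(Y^k_k,\g Y^k_{\setminus k}/Y^k_k)$ has joint density $r^{d-1}\lambda(r,r\g u)$ on $\{r>1\}\times[0,\infty)^{d-1}$. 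By~(L1), $\lambda(r,r\g u)=r^{-(d+1)}\lambda(1,\g u)$, so this density factorizes as $r^{-2}\cdot\lambda(1,\g u)$. Hence $R$ and $\g U$ are independent, $R$ is standard Pareto, and $\g U$ has density $\g u\mapsto\lambda(1,\g u)$, which integrates to one by~(L2). Evaluating the defining identity~\eqref{def_ef} at $y_k=1$ shows that $\g u\mapsto\lambda(1,\g u)$ is precisely the density $f_{\g U^k_{\setminus k}}$ of $\g U^k_{\setminus k}$, and since the $k$th coordinate of $\g Y^k/Y^k_k$ equals $1=U^k_k$, we obtain $\g Y^k/Y^k_k\stackrel{d}{=}\g U^k$. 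As a byproduct this gives the multiplicative representation $\g Y^k\stackrel{d}{=}P\,\g U^k$ with $P$ standard Pareto independent of $\g U^k$, so in~\eqref{Y_rep} one may equivalently replace $\g Y^T$ by $\g U^T$.

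For the second assertion, fix a Borel set $A\subset\mathcal L$ and condition on $T=k$; the joint law of $(\g Y^k,P)$ has density $\lambda(\g y)\,\mathbbm{1}\{\g y\in\mathcal L^k\}\,p^{-2}\,\mathbbm{1}\{p>1\}$. I would pass to polar coordinates $\g y=r\g w$ with $r=\|\g y\|_1$ and $\g w=\g y/\|\g y\|_1\in\Delta:=\{\g w\geq\g 0:\|\g w\|_1=1\}$, so that $\mathrm d\g y=r^{d-1}\,\mathrm dr\,\mu(\mathrm d\g w)$ for the induced measure $\mu$ on $\Delta$, and again use~(L1) as $\lambda(r\g w)=r^{-(d+1)}\lambda(\g w)$. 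In these coordinates $P\g Y^k/\|\g Y^k\|_1=P\g w$, the event $\{P\|\g Y^k\|_\infty/\|\g Y^k\|_1>1\}$ becomes $\{P>1/\|\g w\|_\infty\}$ (which dominates $\{P>1\}$ since $\|\g w\|_\infty\leq 1$ on $\Delta$), and the constraint $\g y\in\mathcal L^k$ becomes $r>1/w_k$, contributing $\int_{1/w_k}^\infty r^{-2}\,\mathrm dr=w_k$. Averaging over $k$ and using $\sum_k w_k=\|\g w\|_1=1$ yields
\[
  \mathbb P\!\left(\frac{P\g Y^T}{\|\g Y^T\|_1}\in A,\ \frac{P\|\g Y^T\|_\infty}{\|\g Y^T\|_1}>1\right)=\frac{1}{d}\int_\Delta\lambda(\g w)\left[\int_{1/\|\g w\|_\infty}^\infty\mathbbm{1}\{p\g w\in A\}\,p^{-2}\,\mathrm dp\right]\mu(\mathrm d\g w).
\]
The identical polar-coordinate reduction applied to $\mathbb P(\g Y\in A)=\Lambda(\g 1)^{-1}\int_{\mathcal L}\mathbbm{1}\{\g y\in A\}\lambda(\g y)\,\mathrm d\g y$ produces the same inner integral, so the right-hand side above equals $d^{-1}\Lambda(\g 1)\,\mathbb P(\g Y\in A)$. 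Choosing $A=\mathcal L$ and using $\mathbb P(\g Y\in\mathcal L)=1$ identifies the conditioning event as having probability $\Lambda(\g 1)/d\in[1/d,1]$, which is positive; dividing the two expressions then gives~\eqref{Y_rep}.

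The computation is a routine Fubini argument once the polar decomposition is in place, and the only delicate point is the bookkeeping: one must verify that the induced simplex measure $\mu$ and the factor $\lambda(\g w)$ enter the numerator of~\eqref{Y_rep} and the expression for $\mathbb P(\g Y\in A)$ in exactly the same way, so that they cancel and leave only the explicit constant $\Lambda(\g 1)/d$. This in turn hinges on matching the homogeneity exponent $-(d+1)$ of $\lambda$ against the Jacobian factor $r^{d-1}$ of the polar change of variables. The standing assumption that $f_{\g Y}$ is positive and continuous ensures that all densities and conditional laws appearing in the argument are well defined.
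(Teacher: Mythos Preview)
Your proof is correct and rests on the same mechanism as the paper's argument --- homogeneity of $\lambda$ combined with a polar decomposition on the $\ell_1$-simplex --- but it is more self-contained. The paper establishes the first assertion by invoking Proposition~1 of \cite{dom2016} for the law of the extremal function and the second by invoking Theorem~2 of \cite{dom2016} for the spectral-measure representation of $\Lambda$, then reading off the constant $\Lambda(\g 1)/d$; you instead carry out both steps explicitly via changes of variables. Your factorization $r^{-2}\lambda(1,\g u)$ for the density of $(Y^k_k,\g Y^k_{\setminus k}/Y^k_k)$ reproduces exactly what the paper derives after the citation, and your direct simplex computation recovers the content of the cited spectral theorem in this setting, including the key cancellation $\sum_k w_k=1$ that makes the mixture over $T$ produce a single integral. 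The gain of your route is that the lemma becomes independent of external references; the paper's route is terser at the cost of those citations.
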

The above representation yields a simple algorithm for exact simulation of $\g Y$; see also \cite{def2018}.
\begin{algorithm}[Exact simulation of a multivariate Pareto distribution $\g Y$]\label{algo1}

1. Simulate a standard Pareto random variable $P$.\\
2. Simulate $T$ uniformly on $\{1,\dots, d\}$ and sample a realization of
the extremal function~$\g U^{T}$ relative to coordinate $T$.\\
3. If $\max\{P  \|\g U^{T}\|_\infty / \|\g U^{T}\|_1\} > 1$,\\
\phantom{3.} \quad return $ \g Y = P  \g U^{T} / \|\g U^{T}\|_1$ as realization of the multivariate Pareto distribution.\\
4. Else,\\
\phantom{4.} \quad reject the simulation and go back to step 1.
\end{algorithm}
The complexity of this simulation algorithm as a function of the dimension $d$ of the vector $\g Y$ is driven by the number of times one has to sample from one of the extremal functions $\g U^1, \dots, \g U^d$, since simulation of the variables $P$ and $T$ requires much less computational effort. Let $C_{\g Y}(d)$ denote the
number of extremal functions that have to be simulated in the above algorithm. The random variable $C_{\g Y}(d)$ follows a geometric distribution and from~\eqref{rej_prop} in the proof of Lemma~\ref{lem_extr_func} its expectation is
$$\mathbb E \{C_{\g Y}(d) \} = d / \Lambda(\g 1) \in [1,d].$$
The expected complexity
therefore depends on both the dimension and the strength of extremal dependence in $\g Y$. Weak dependence implies a large coefficient $\Lambda(\g 1)$ closer to $d$ and therefore reduces the computational effort required for exact simulation.
The simulation of multivariate Pareto distributions is in general computationally easier than
for the associated max-stable distribution $\g Z$. Indeed, exact simulation of the latter is also based on samples from a mixture of the $\g U^1, \dots, \g U^d$, and the fastest algorithm in
\cite{dom2016} has expected complexity $\mathbb E \{C_{\g Z}(d) \} = d$; see also \cite{die2015} and \cite{oes2018} for other exact simulation methods.

The complexity measures $C_{\g Y}(d)$ and $C_{\g Z}(d)$ only consider the number of extremal functions required for one exact simulation of $\g Y$ and $\g Z$, respectively. 
The computational effort of sampling $\g U^k$ can however be significantly lower
if $\g Y$ has a sparse structure. If $\g Y$ factorizes according to a graph, then, by the Definition \ref{CI_def} of conditional independence, the $\g Y^1, \dots, \g Y^d$ inherit the sparsity of this graph structure. This is particularly important in the case of trees and for H\"usler--Reiss distributions, as shown in the examples below.
It is important to note that more efficient simulation of the extremal functions speeds up exact simulation of the multivariate Pareto distribution $\g Y$, but also of the max-stable distribution~$\g Z$.

\begin{example}
  Suppose that $\g Y$ factorizes according to a tree $\mathcal T = (V, E)$.
  It follows from Proposition \ref{prop_tree} and
  Lemma \ref{lem_extr_func} that the extremal function $\g U^k$ relative to coordinate $k\in V$ is 
  $$\g Y^k / Y_k^k \stackrel{d}{=} \left(\prod_{e\in \ph(ki)} U_e \right)_{i\in V}.$$
  For exact simulation of $\g Y$ it therefore suffices to simulate the univariate 
  random variables $U_e$. This is feasible even in very large dimensions.  
\end{example}

\begin{example}
  If $\g Y$ has a H\"usler--Reiss distribution that factorizes on the 
  graph $\mathcal G=(V,E)$, then it follows from~\eqref{Yk_rep} that
  the extremal function $\g U^k$ relative to coordinate $k\in V$ is
  \begin{align*} 
    \g Y^k /Y^k_k \stackrel{d}{=} \exp\left\{ \g W^k - \Gamma_{\cdot\, k} / 2 \right\},
  \end{align*}
  where $\g W^k$ is a centred normal distribution with covariance matrix $\tilde \Sigma^{(k)}$ in~\eqref{full_sigma}; see also Proposition 4 in \cite{dom2016}. The normal distribution
  $\g W^k_{\setminus k}$ factorizes in the classical sense on the subgraph $\mathcal G_{\setminus k}$, and efficient simulation algorithms exist if the graph is sparse \citep[e.g.,][]{rue2005}. 
\end{example}

The exact simulation algorithms for both multivariate Pareto and max-stable distributions are implemented in our R-package \texttt{graphicalExtremes} \citep{graphicalExtremes}.

\subsection{Simulation study}\label{simulation_study}

We assess the efficiency of parameter estimation and model selection in the framework of
graphical models for extremes described in the previous sections. We fix a dimension $d$ of
variables or nodes $V=\{1,\dots, d\}$ and a block graph $\mathcal G = (V,E)$ as in Section~\ref{construction}. 
In this study we simulate samples directly from the limiting distribution $\g Y$ using the exact Algorithm~\ref{algo1}, but we use the censored estimation since this is
common practice in applications. 

We first choose $d=5$ and let $\mathcal G$ be the undirected version of the tree in Figure \ref{dir_tree}.
We simulate $n\in\{100,200\}$ samples $\g y^{(1)}, \dots, \g y^{(n)}$ of a H\"usler--Reiss distribution with parameter  matrix $\Gamma$ that factorizes according to $\mathcal G$.
The entries of $\Gamma$ need to be specified only on the submatrices  $\Gamma^{(C)}$ for all cliques $C \in \mathcal C$ of $\mathcal G$, since the solution to the matrix completion problem in Proposition~\ref{mat_compl} then yields the unique variogram matrix $\Gamma$. In this simulation we set
\begin{align}\label{gamma_tree}
  \Gamma = 
  \begin{bmatrix}
    0 & \mathbf{1} & \mathbf{2} & 2 & 3 \\
    1 & 0 & 3 & \mathbf{1} & \mathbf{2} \\
    2 & 3 & 0 & 4 & 5 \\
    2 & 1 & 4 & 0 & 3\\
    3 & 2 & 5 & 3 & 0
  \end{bmatrix},
\end{align}
where we only specified the four parameters $\Gamma_{ij}$ for $(i,j) \in E$, $i<j$, to the values in bold, and the rest of the matrix is implied by the graph structure.

In this dimension we can still maximize the censored version of the
joint likelihood~\eqref{llh_contr} to obtain an estimate $\widehat \Gamma_{ij}^{\text{joint}}$,
$\{i,j\} \in E$, of the parameters corresponding to the four edges of the tree.
We also obtain estimates $\widehat \Gamma_{ij}$,
$\{i,j\} \in E$, of the parameters of each clique separately
by maximizing the censored clique likelihood~\eqref{clique_llh}.
In both cases, the four estimated parameters yield estimates $\widehat \Gamma^{\text{joint}}$
and $\widehat \Gamma$ of the whole variogram matrix $\Gamma$ through the graph structure.
We repeat the simulation and estimation $200$ times and compare
the efficiency of both approaches in Figure \ref{fig_box}, displaying only the four free parameters that have actually been estimated.

The difference in efficiency between the joint and clique likelihoods seems to be
small or even negligible. This is due to two reasons. For non-censored points
the two likelihoods only differ by the normalizing constant $Z_\theta$. Since this
constant only measures the global strength of dependence and does not depend on
the data, it seems not very sensitive to changes in the parameter $\theta$. 
The second difference between the two approaches is that they use slightly different data. Consider a clique $C\in \mathcal C$ and the corresponding model
parameter~$\theta_C$. The joint likelihood uses all data $\g Y$ in the space $\mathcal L = \{\g y\in \mathcal E: \exists j\in V \text{ s.t. } y_j > 1\}$, but censors all components with $y_j\leq 1$. On the other hand, the clique likelihood uses the marginals $\g Y_C$
of all data $\g Y$ in $\mathcal L_C =\{\g y\in \mathcal L: \exists j\in C \text{ s.t. } y_j > 1 \}$. Consequently, the additional data used in the joint likelihood is
in $\mathcal L \setminus \mathcal L_C = \{\g y \in\mathcal  L: y_j \leq  1 \text{ for all } j\in C\}$. But the contribution to the joint likelihood of data in this set with regard
to the parameter $\theta_C$ is completely censored and does therefore not
add significant additional information. 
These two considerations underline that estimating the parameters for each clique
separately does not result in significant efficiency losses. This is one of the main
advantages of graphical models, namely that the distribution is defined locally
by the cliques and extends globally by the conditional independence structure.
In terms of computational aspects, the joint likelihood becomes infeasible even in
moderate dimensions, whereas the clique likelihood is applicable in
high dimensions as long as the cliques have small enough sizes. Moreover, the computations
for different cliques can be easily parallelized.
\begin{figure}[ht]
\centering
{\includegraphics[clip,height=.45\textwidth]{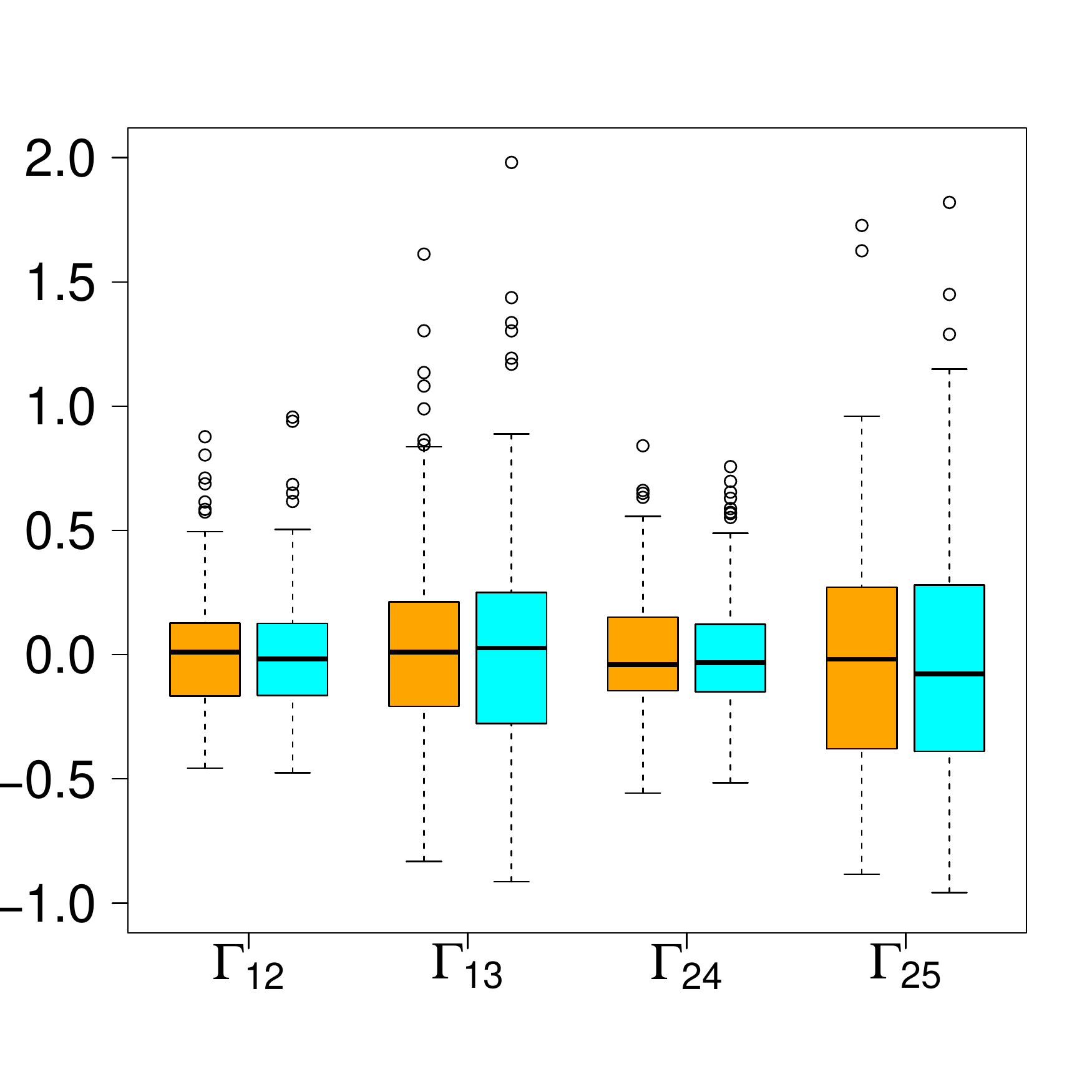}}%
{\includegraphics[clip,height=.45\textwidth]{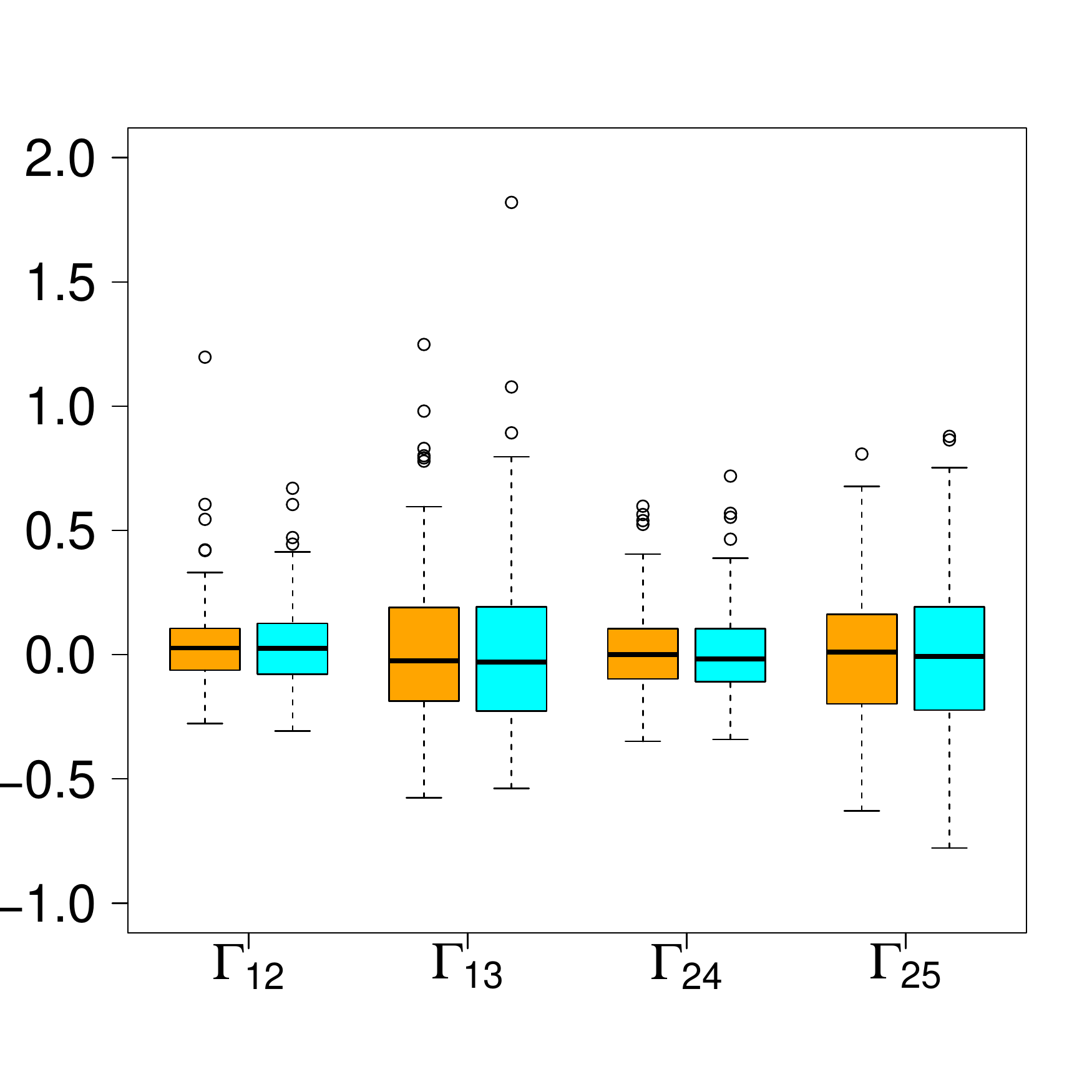}}%
\caption{Boxplots of errors of the four parameter estimates of the H\"usler--Reiss tree model in~\eqref{gamma_tree} based on joint (orange) and clique likelihood (blue) with sample size $n=100$ (left) and $n=200$ (right).}
\label{fig_box}
\end{figure}

For the second experiment we take $d=16$ and let $\mathcal G$ be the graph on the left-hand side of Figure \ref{fig_simu}, which is not a tree. We simulate $n=100$ samples of a H\"usler--Reiss distribution with parameter  matrix $\Gamma$ that factorizes according to $\mathcal G$. The parameters of
the $p=18$ edges are independently sampled from a uniform distribution on $(0.5,1)$, under the constraint that $\Gamma$ is conditionally negative definite on cliques with three nodes.  
We illustrate how we can choose the best graphical model, where
we restrict to block graphs as in Section~\ref{construction} with cliques of sizes two and three.
We first construct the minimum spanning tree as described in Section~\ref{model_selection}
within the class of H\"usler--Reiss distributions. The estimated edge set of this tree
is denoted by $E_1$. The~$15$ parameter estimates $\widehat \Gamma_{ij}$, $\{i,j\}\in E_1$ 
obtained by fitting the clique likelihoods of each clique of the tree yield
a unique estimate $\widehat \Gamma$ of the $d\times d$-dimensional variogram matrix; see Proposition \ref{mat_compl}. This tree model does not contain all edges of the true underlying graph. 
We therefore perform a greedy forward selection in order to add additional edges and improve the model. In each step, we define an enlarged edge set $E_{m+1} = E_m \cup \{i,j\}$, $m=1,2,\dots$, restricting to those edges $\{i,j\}$, $i,j\in V$, that still yield a block graph with cliques of maximal size three. We continue this process until no more edge can be added in this way. For the same parameter matrix $\Gamma$, we repeat the simulation and model selection 100 times. The right-hand side of Figure \ref{fig_simu} shows the graph with the selected edges,
where the line width of each edge indicates the number of times it has been selected among
the first $18$ edges. It can be seen that the graph structure is generally very well identified.
For each model and each repetition we also compute the resulting $\AIC$
according to~\eqref{AIC}. The proportion of times that the model with $\{15,\dots,20\}$ edges has the
smallest $\AIC$ are $\{0.01, 0.11, 0.23, 0.39, 0.23, 0.03\}$. Even though
the $\AIC$ is a criterion built for model estimation and not for identification \citep[cf.,][]{arl2010}, it seems to be well suited to select the correct degree of sparsity for this extremal graphical model.

\begin{figure}[ht]
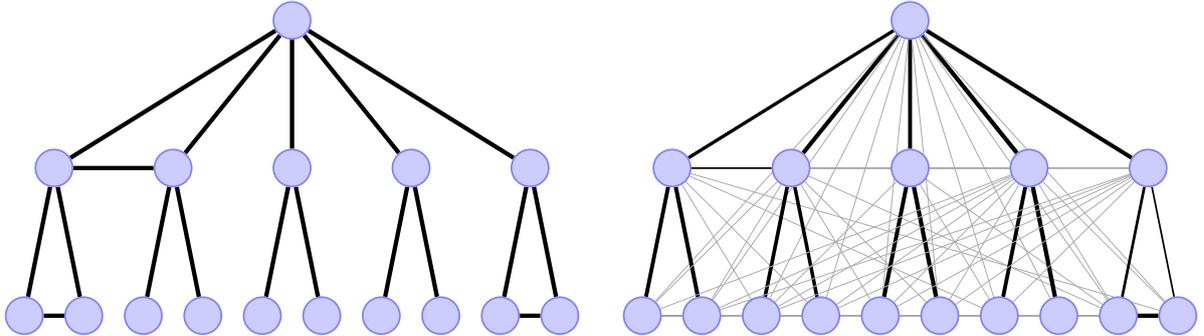

\centering
{\includegraphics[trim=0cm 0 0cm 0cm, clip,height=5cm, page=10]{HR_graphs.pdf}}%
{\includegraphics[trim=0cm 0 0cm 0cm, clip,height=5cm, page=4]{HR_graphs.pdf}}%
\caption{True underlying graphical structure (left) and the estimated structure (right) in the second experiment, where the line width indicates the number of times the edge has been selected.}
\label{fig_simu}
\end{figure}

\section{Application}
\label{sec:appl}
We illustrate the applicability of extremal graphical models at the example of
river discharges in the upper Danube basin, a region that is prone to serious flooding. The data are provided by the Bavarian Environmental Agency (\texttt{http://www.gkd.bayern.de}) and we use $d=31$ gauging stations with $50$ years of common daily data from 1960--2009. The tree induced by the physical flow-connections at these stations is shown on the left-hand side of
Figure~\ref{fig_flow}, where the path $10\to 9 \to \dots \to 1$ is on the Danube and
the other branches are tributaries. The spatial extremal dependence structure of this data set has been studied
in \cite{asadi2015extremes} and we follow their preprocessing steps to make the results comparable. Out of all daily data only the three months June, July and August are considered since the most
severe floods occur in this period and are caused by heavy summer rain \citep{boe2006}. The $50\times 92 = 4600$ observations in these months are declustered in time in order to remove temporal dependence and to match slightly shifted peak flows at different locations. We refer to \cite{asadi2015extremes} for more details on the data, the declustering method and exploratory analysis concerning stationarity and asymptotic dependence; see also \cite{kee2009, kee2013} for other approaches to flood risk assessment.

The declustering yields $N=428$ supposedly independent events $\g x^{(1)}, \dots, \g x^{(N)}\in \mathbb R^d$. The univariate marginal distributions of these data are
estimated in \cite{asadi2015extremes} by a regionalized extreme value model. We focus on estimation
of the extremal dependence and normalise the data
empirically to standard Pareto marginals. This still guarantees consistent inference of the dependence parameters \cite[e.g.,][]{gen1995, joe2015}. We obtain $n=117$ approximate samples of
$\g Y$ by $\g y^{(h)} = \g x^{(h)}/u$ for all observations with $\|\g x^{(h)}\|_\infty > u$, where
we choose the threshold $u$ as the $90\%$-quantile of the marginal Pareto distribution.

The max-stable Brown--Resnick model in \cite{asadi2015extremes} corresponds to a parametric
family of H\"usler--Reiss Pareto distributions $\{f_{\g Y}(\cdot ; \theta): \theta \in \Omega\}$
at the $31$ gauging stations.
The dependence model is tailor-made for this particular application to river extremes
and uses several covariates such as distance on the river network, catchment sizes and altitudes.
In terms of our new notion of extremal graphical models it is readily checked using the results of Proposition~\ref{prop:CI_HR} that for any parameter value $\theta\in \Omega$ their model does not exhibit conditional independencies.

\begin{figure}[ht]
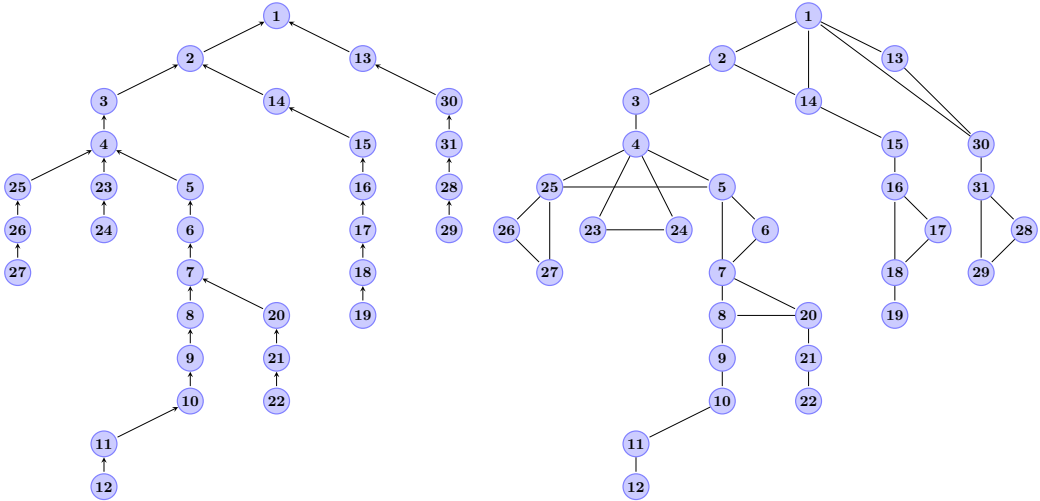

\centering
{\includegraphics[clip,height=7cm, page=7]{HR_graphs.pdf}}%
{\includegraphics[clip,height=7cm, page=8]{HR_graphs.pdf}}%
\caption{The tree induced by flow-connections for the $31$ stations in the upper Danube basin (left)
and the estimated graph with the optimal $\AIC$ (right).}
\label{fig_flow}
\end{figure}

We propose a different H\"usler--Reiss model that factorizes according to a sparse graph and does not require any domain knowledge or additional covariates.
In fact, we propose a sequence of models 
$$M^{(l)} = \{f_{\g Y}(\cdot ; \theta^{(l)}): \theta^{(l)} \in \Omega^{(l)}\}, \quad l=1,\dots, L,$$
where $\theta^{(l)} = (\theta^{(l)}_C)_{C\in \mathcal C^{(l)}}$, and $\mathcal C^{(l)}$ is the set
of all cliques of the $l$th extremal graphical model $\mathcal G^{(l)}$ according to which
the model family $M^{(l)}$ factorizes.
As simplest model we take $\mathcal G^{(1)}$ to be the minimum spanning tree within the family of
H\"usler--Reiss models as described in Section~\ref{model_selection}.
Similarly as in the simulation study
in Section~\ref{simulation_study}, we obtain $\mathcal G^{(2)}, \dots, \mathcal G^{(L)}$ by successively adding edges to the tree $\mathcal G^{(1)}$
in a greedy way while restricting the model class to block graphs with cliques of size at most three.
The estimated tree $\mathcal G^{(1)}$ is shown on the left-hand side of Figure~\ref{est_tree} in Appendix~\ref{appendix_tree}. It is very similar to the
tree in Figure~\ref{fig_flow} that corresponds to the tree induced by the flow-connections of the river network. There are however differences, and it is important to note that the flow-connection
tree is not necessarily the optimal tree structure in terms of extreme river discharges. 
Appendix~\ref{appendix_tree} also contains a sensitivity analysis of the tree structure
for different thresholds $u$, and a comparison to a Gaussian tree model fitted to non-extremal data.

Figure \ref{fig_AIC} shows the $\AIC$ values for the different models $M^{(1)}, \dots, M^{(L)}$.
The forward selection is a greedy approach and it does not guarantee to find the optimal
graph. We therefore also initialize the forward selection with the simplest model $\mathcal G^{(1)}$
being the flow-connection tree on the left-hand side of Figure \ref{fig_flow}. This tree
must have a larger $\AIC$ than the minimum spanning tree, but interestingly, the left panel of Figure \ref{fig_AIC}
shows that by adding additional edges the optimal $\AIC$ is better than
the previous optimal $\AIC$. In this particular case, we thus choose the graph
initiated with the flow-connection tree with $9$ additional edges.
In general, a tree structure appears to be too simple for this application. The reason is that only
part of the extremal dependence of discharges at different locations can be explained
by flow-connections. Additional dependence may arise even between flow-unconnected locations
due to proximity of their catchments that are affected by the same spatial precipitation
events. \cite{asadi2015extremes} model this explicitly through a variogram with two parts, one for the dependence on the river network and one for the spatial, meteorological dependence.
The $9$ additional edges of the graphical model on the right-hand side of Figure \ref{fig_flow}, which minimizes the $\AIC$, partly improve the model in terms of this spatial dependence
between flow-unconnected stations, but also strengthen it between some flow-connected
locations. This best graphical model has $39$ edges and an $\AIC$ of $5269.43$. It significantly outperforms the simpler tree models with $30$ edges and the spatial model of \cite{asadi2015extremes}, which has only six parameters but an $\AIC$ of $5291.34$, which is indicated by
the dashed orange line in the left panel of Figure \ref{fig_AIC}.

A popular summary statistic for extremal dependence between $Y_i$ and $Y_j$, $i,j\in V$,
is the tail correlation \citep[cf.,][]{col1999}, which can be expressed as $\chi_{ij} = 2 - \Lambda_{ij}(1,1)$.
The centre and right panels of Figure \ref{fig_AIC} compare empirical estimates of these statistics for all pairs of stations with
those implied by the fitted models. In terms of this bivariate summary, both models seem
to fit the data well, even though the graphical model seems to be slightly less biased
than the spatial model.
There are also versions of $\chi$ that assess how a model captures the higher-order
extremal dependence structure. In Figure~\ref{fig_trivariate} in Appendix~\ref{appendix_trivariate} we compare the trivariate empirical
$\chi$ coefficients with those implied from the fitted spatial and graphical model.
Both models fit well the trivariate dependence, again with a slightly lower bias of the graphical model.

In this application we have only considered block graphs, which are particularly convenient in terms of statistical inference as seen in the previous sections. In general it should be assessed whether this sparse model class is justified for the data. In our case, the bivariate and trivariate $\chi$ coefficients indicate that block graphs are flexible enough to capture
the extremal dependence structure of the river data. This is further supported by the fact that
the AIC curve in Figure~\ref{fig_AIC} attains its minimum even before the maximal number of edges is added in this model class. It is an important question for future research how extremal graphical models with more complicated structures can be
estimated.

\begin{figure}[ht]
  \centering
  \hspace*{-3em}
{\includegraphics[height=6cm]{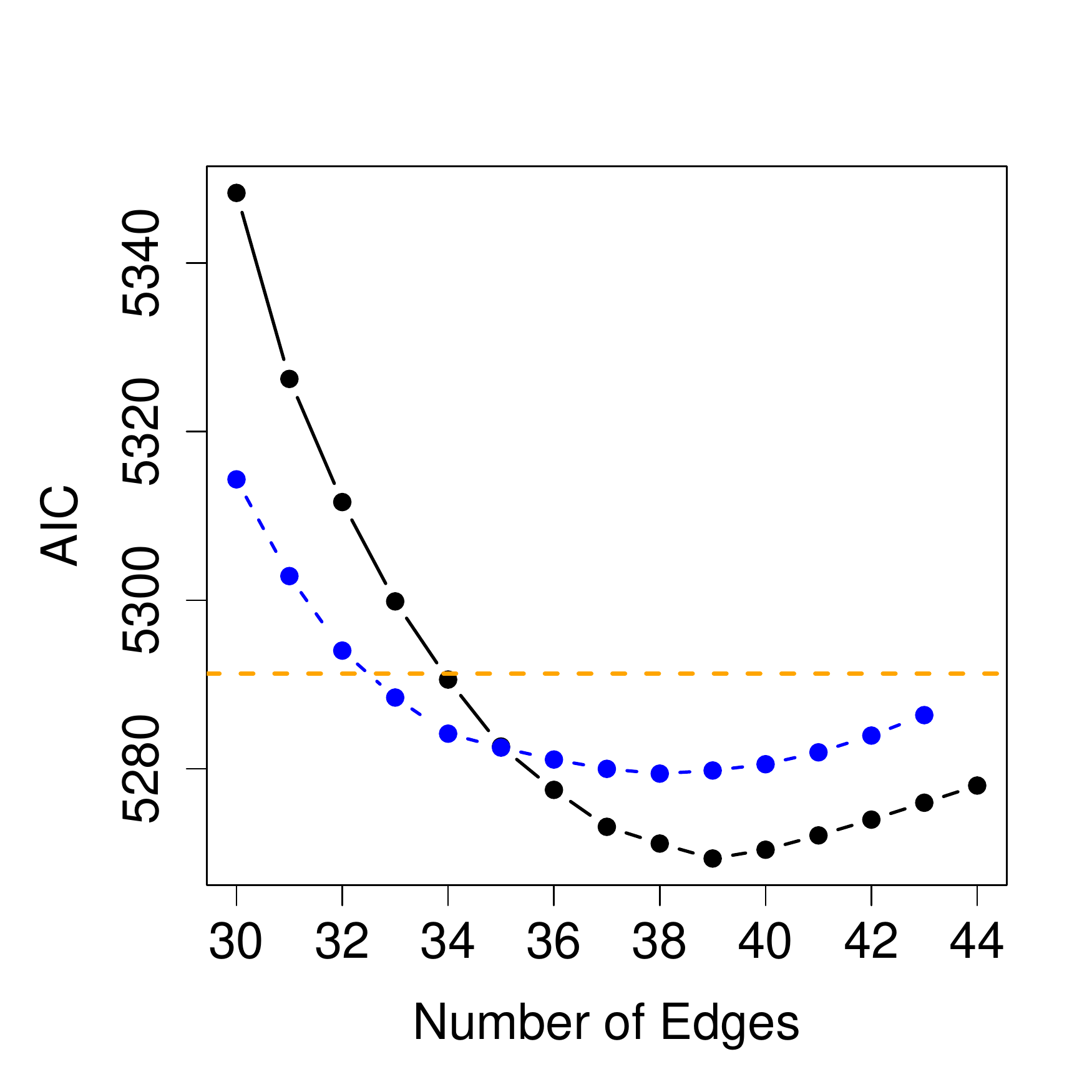}}%
{\includegraphics[height=6cm]{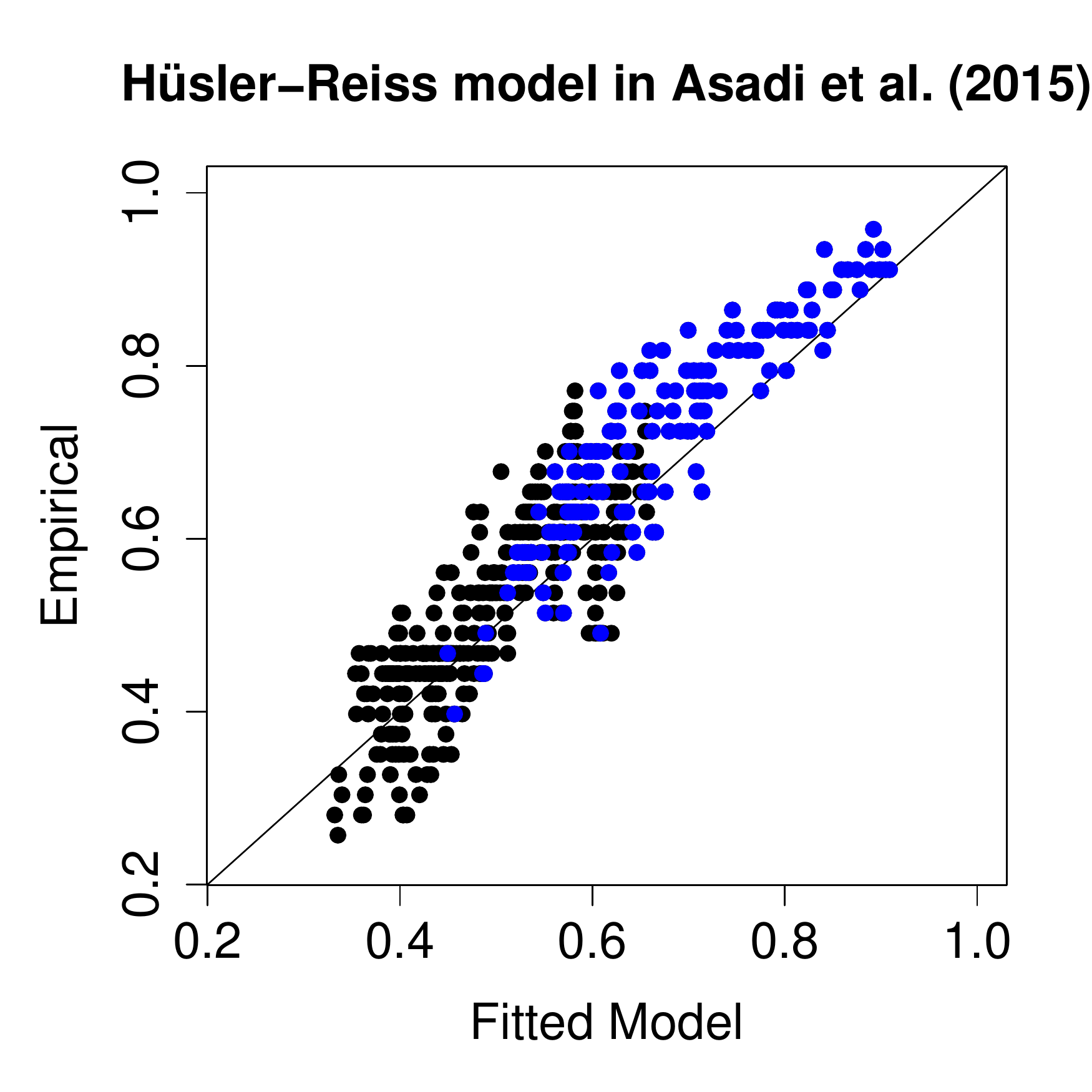}}%
{\includegraphics[height=6cm]{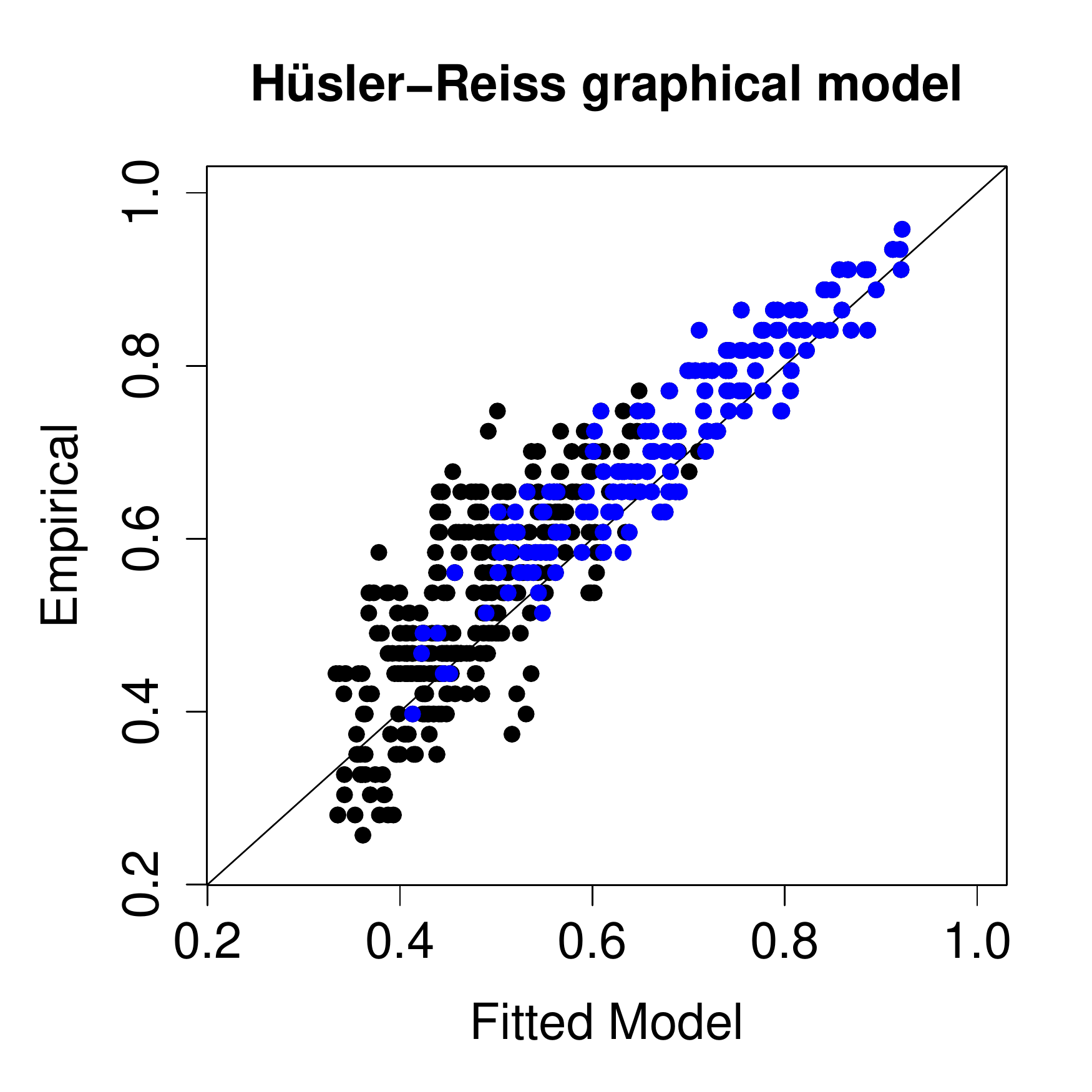}}%
\caption{Left: $\AIC$ values for extremal graphical models for the Danube data set with an increasing number of edges, starting from the minimum spanning tree (blue line) and the flow-connection tree (black line); the horizontal orange line is the $\AIC$ of the spatial model in \cite{asadi2015extremes}.
  Centre and right: empirically estimated $\chi$ coefficients against those implied by the fitted spatial and graphical model minimizing the $\AIC$, respectively; blue points correspond to flow-connected stations.}
\label{fig_AIC}
\end{figure}

\section{Discussion}
\label{sed:disc}

The conditional independence relation $\perpe$ introduced in this paper
is natural for a multivariate Pareto distribution $\g Y$ as it explains the factorization
of its density $\g f_{\g Y}$ into lower-dimensional marginals (cf., Theorem~\ref{thm:equivFactorHomog}).
This establishes a link of extreme value statistics to the broad field of graphical models,
and it opens the door to define sparsity and to perform structure learning for tail distributions.
In this work we have studied the probabilistic structure and statistical inference for
some important models, with the main purpose of modelling the extremal dependence structure.
Many subsequent research directions are possible.
Directed acyclic graphs as in \cite{gis2018}
for max-linear models may be formulated in our setting and would yield different factorizations
than for undirected graphs, and this would form the basis to extend work
on causal inference for extremes \citep{nav2018,mha2019,gne2019} to continuous extreme value distributions.
The models in this paper are well-suited for asymptotic dependence. Another line of research
focuses on multivariate tail models under asymptotic independence \citep{led1997,HeffernanTawn2004,Wadsworthetal2017}. Conditional independence and graphical models have not been
studied in this framework, except for the special case of Markov chains \citep{kul2015,pap2017}.

Conditional independence for $\g Y$ does not carry over to factorization of the density of the associated max-stable distribution $\g Z$. By Proposition \ref{prop_equiv}, the conditional independence relation~$\perpe$
does however imply the factorization of the exponent measure density $\lambda$ of $\g Z$, which is the key object
in simulation \citep{dom2016} and full likelihood estimation \citep{thi2015, dom2016a, hus2019} of max-stable processes. Thus, sparsity in our notion for multivariate Pareto distributions also facilitates inferential tasks for max-stable distributions, a fact that has been briefly discussed for simulation in Section \ref{simulation} but deserves further investigation. 

The application to flood risk assessment is just one illustrative example. Unlike spatial models, extremal graphical models can be applied to multivariate problems without domain knowledge, as for instance
in financial or insurance applications. The ability to learn underlying structures in a data-driven
way has also great practical potential for exploratory analysis and data visualization.
In ongoing research we investigate efficient learning of extremal tree structures and,  in the case of H\"usler--Reiss distributions, of more general graphs based on $\ell_1$-regularization.

\section*{Acknowledgments}
We thank Robin J.~Evans and Nicola Gnecco for helpful discussions. We are grateful to the editorial team and the referees for knowledgeable comments that improved the paper.
Financial support by the Swiss National Science Foundation (S.~Engelke) and by the Berrow Foundation (A.~S.~Hitz) is gratefully acknowledged. The paper was completed while S.~Engelke was a visitor at the Department of Statistical Sciences, University of Toronto.

\appendix

\section*{Appendix}
\addcontentsline{toc}{section}{Appendices}
\renewcommand{\thesubsection}{\Alph{subsection}}
\label{appendix}

\subsection{Definitions for graphical models}
\label{def_graphs}

Let $\mathcal G = (V, E)$ be an undirected graph with node set $V = \{1,\dots, d\}$ and
edge set $E \subset V\times V$; see Section~\ref{graph_models}. We define
the notion decompositions and decomposability for the graph $\mathcal G$ \citep[cf.,][Definition 2.1]{Lauritzen}.
\begin{definition}
A triplet $(A,B,C)$ of disjoints subsets of $V$ is said to form a {decomposition} of $\mathcal G$ into the components $\mathcal G_{A\cup B}$ and $\mathcal G_{B\cup C}$ if $V=A\cup B\cup C$ and
\begin{itemize}
\item $B$ separates $A$ from $C$ (i.e., every path from $A$ to $C$ intersects $B$);
\item $B$ is a complete subset.
\end{itemize}
The decomposition is called {proper} if $A$ and $C$ are both non-empty. A graph $\mathcal G$ is {decomposable} if it is complete or if there exists a proper decomposition $(A,B,C)$ into decomposable subgraphs $\mathcal G_{A\cup B}$ and $\mathcal G_{B\cup C}.$ Decomposable graphs are also known as triangulated or chordal graphs.

For instance, $(\{1, 2,3,4,5\},\{4,5\},\{4,5,6\})$ is a proper decomposition of the decomposable graph in Figure~\ref{fig:tikz_simpleGraph}. 
 \end{definition}
 For a connected, decomposable graph $\mathcal G$, we can order the set of the cliques $\mathcal C = \{C_1,\dots, C_m\}$ such that for all $i=2,\dots, m$, 
 \begin{align}\label{rip}
   D_i := C_i \cap \bigcup_{j=1}^{i-1}  C_j \subset C_k \text{ for some } k< i,
   \end{align}
 a condition called the running intersection property; cf., \citet[][Chapter 2]{Lauritzen} and \cite{gre2013}.
 The sets $D_i$, $i=2,\dots, m$, are called separators of the graph, and both $\mathcal C$
 and the collection of separators $\mathcal D = \{D_2,\dots, D_m\}$ are uniquely determined up to different orderings.
 The separators may not all be distinct, and we say that $\mathcal D$ is a multiset.
 A possible enumeration of cliques and separators for the graph in Figure \ref{fig:tikz_simpleGraph} that satisfies the running intersection property  is 
$$\mathcal C=(\{1,2\},\{2,3,4,5\},\{4,5,6\}), \quad \mathcal D=(\{2\},\{4,5\}).$$
  From \eqref{rip} we note that the clique $C_m$ intersects the other cliques only in $D_m$. Consider the connected, decomposable subgraph $\mathcal G_{m-1}$ of $\mathcal G$ with node set $V_{m-1}= V \setminus  (C_m\setminus D_m)$ and corresponding induced edge set.
 The property \eqref{rip} then holds for $\mathcal G_{m-1}$, which has one clique less. Continuing this process, we note that each $C_j$ intersects the subgraph $\mathcal G_{j}$ only in $D_j$, $j=2,\dots, m$,
 and $\mathcal G_1$ with nodes $V_1 = C_1$ is complete.

\begin{figure}[h]
    \centering
        \begin{subfigure}{0.49\textwidth}
        \includegraphics[width=\textwidth, page=5]{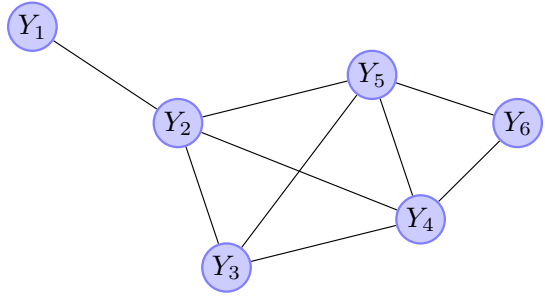}
    \end{subfigure}
            \caption[A decomposable graph]{A decomposable graph with set of nodes $V=\{1,\dots,6\}.$ The cliques of the graph are $\{1,2\},$ $\{2,3,4,5\}$ and $\{4,5,6\}.$ The separators are $\{2\}$ and $\{4,5\}.$ 
    }\label{fig:tikz_simpleGraph}
\end{figure}

\subsection{Link between variogram and covariance matrices}
\label{link_vario}
For $k\in V = \{1,\dots, d\}$, we denote  by $\mathcal P_{d-1}^k$ the set of all strictly positive definite covariance matrices $\Sigma^{(k)} \subset \mathbb R^{(d-1)\times (d-1)}$
indexed by $V \setminus \{k\}$.
On the other hand, the space of strictly conditionally negative definite $d\times d$ matrices is denoted by
\begin{align*}
  \mathcal D_d = \bigg\{\Gamma \in [0,\infty)^{d\times d}:  \g a^\top \Gamma \g a < 0 \text{ for all } & \g a \in \mathbb R^d\setminus \{0\} \text{ with } \sum_{i\in V} a_i = 0,\\
    & \Gamma_{ii} = 0, \Gamma_{ij}= \Gamma_{ji} \text{ for all } i,j\in V  \bigg\}. 
\end{align*}

\begin{lemma}
  For any $k\in V$, there is a bijection $\varphi_k:\mathcal D_d\to \mathcal P_{d-1}^k$ given by
\begin{equation}
\begin{aligned}\label{phik}
    \varphi_k:&\, \Gamma \mapsto \frac{1}{2} \{\Gamma_{ik}+\Gamma_{jk}-\Gamma_{ij}\}_{i,j\neq k},\\
    \varphi_k^{-1}:&\,  \Sigma^{(k)} \mapsto \einsfun \diag(\tilde\Sigma^{(k)})^\top +  \diag(\tilde\Sigma^{(k)})\einsfun^\top - 2 \tilde\Sigma^{(k)},
\end{aligned}
\end{equation}
where $\tilde\Sigma^{(k)}$ is the $d\times d$ matrix that coincides with $\Sigma^{(k)}$ for $i,j\neq k$ and that has zeros in the $k$th column and row.
\end{lemma}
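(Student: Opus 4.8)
The plan is to establish the three ingredients of the claim in turn: that $\varphi_k$ takes values in $\mathcal P_{d-1}^k$, that the displayed formula for $\varphi_k^{-1}$ takes values in $\mathcal D_d$, and that the two maps are mutually inverse. Each reduces to an elementary identity between quadratic forms; the only point requiring care is keeping track of which auxiliary vectors are nonzero, since this is what preserves \emph{strict} definiteness across the correspondence.

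First I would show $\varphi_k(\Gamma)\in\mathcal P_{d-1}^k$. Symmetry is immediate from $\Gamma_{ij}=\Gamma_{ji}$. For strict positivity, fix a nonzero $\g a\in\mathbb R^{d-1}$ indexed by $V\setminus\{k\}$, set $s=\sum_{i\neq k}a_i$, and define $\g b\in\mathbb R^d$ by $b_i=a_i$ for $i\neq k$ and $b_k=-s$, so that $\sum_{i\in V}b_i=0$. Expanding and using $\Gamma_{kk}=0$ gives the identity $\g a^\top\varphi_k(\Gamma)\g a=-\tfrac12\,\g b^\top\Gamma\g b$. Since $\g a\neq\g 0$ forces $b_i=a_i\neq0$ for some $i\neq k$, hence $\g b\neq\g 0$, the strict conditional negative definiteness of $\Gamma$ yields $\g a^\top\varphi_k(\Gamma)\g a>0$.

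Next, for $\Sigma^{(k)}\in\mathcal P_{d-1}^k$ and $\Gamma:=\varphi_k^{-1}(\Sigma^{(k)})$, the formula reads $\Gamma_{ij}=\tilde\Sigma^{(k)}_{ii}+\tilde\Sigma^{(k)}_{jj}-2\tilde\Sigma^{(k)}_{ij}$ componentwise. Then $\Gamma_{ii}=0$ and $\Gamma_{ij}=\Gamma_{ji}$ are clear, and $\Gamma_{ij}=(\g e_i-\g e_j)^\top\tilde\Sigma^{(k)}(\g e_i-\g e_j)\geq0$ because the zero-padded matrix $\tilde\Sigma^{(k)}$ is positive semidefinite (it is degenerate, with the $k$th coordinate in its kernel). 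For strict conditional negative definiteness, take a nonzero $\g b\in\mathbb R^d$ with $\sum_{i\in V}b_i=0$; the diagonal contributions cancel, leaving $\g b^\top\Gamma\g b=-2\,\g b^\top\tilde\Sigma^{(k)}\g b=-2\,\g b_{\setminus k}^\top\Sigma^{(k)}\g b_{\setminus k}$. Here $\g b_{\setminus k}\neq\g 0$, since $\g b_{\setminus k}=\g 0$ would force $b_k=-\sum_{i\neq k}b_i=0$ and hence $\g b=\g 0$; so strict positive definiteness of $\Sigma^{(k)}$ gives $\g b^\top\Gamma\g b<0$, i.e. $\Gamma\in\mathcal D_d$.

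It remains to verify the two compositions. For $\varphi_k^{-1}\circ\varphi_k$ I would first note that the zero-padded matrix associated with $\varphi_k(\Gamma)$ has diagonal $\diag(\tilde\Sigma^{(k)})=\Gamma_{\cdot\,k}$, the $k$th column of $\Gamma$ (whose $k$th entry equals $\Gamma_{kk}=0$); substituting into the formula for $\varphi_k^{-1}$ and cancelling recovers $\Gamma_{ij}$ for all $i,j\in V$, treating the cases $i,j\neq k$ and $k\in\{i,j\}$ separately. For $\varphi_k\circ\varphi_k^{-1}$, from $\Gamma=\varphi_k^{-1}(\Sigma^{(k)})$ one reads off $\Gamma_{ik}=\tilde\Sigma^{(k)}_{ii}$, and inserting this into the formula for $\varphi_k$ returns $\Sigma^{(k)}_{ij}$ for $i,j\neq k$. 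Hence $\varphi_k$ has the stated two-sided inverse and is in particular a bijection from $\mathcal D_d$ onto $\mathcal P_{d-1}^k$. No step is a genuine obstacle; the only thing to watch is the nonvanishing of $\g b$ and of $\g b_{\setminus k}$, which is exactly what makes strictness transfer.
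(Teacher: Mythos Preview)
Your proof is correct and follows essentially the same approach as the paper: the key step in both is the quadratic-form identity $\g a_{\setminus k}^\top \Sigma^{(k)}\g a_{\setminus k} = -\tfrac12\,\g a^\top\Gamma\g a$ for $\g a$ with zero sum, which transfers strict definiteness in each direction. Your version is simply more detailed, spelling out the mutual-inverse verification and the nonnegativity of the entries of $\varphi_k^{-1}(\Sigma^{(k)})$, both of which the paper leaves to the reader.
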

\begin{proof}
  It is easy to check that the mappings are their mutual inverses.
  To see that the strict positive definiteness of $\Sigma^{(k)}$ is equivalent
  to the strict conditionally negative definiteness of $\Gamma$, we observe
  for any $\g a_{\setminus k} \in \mathbb R^{d-1}\setminus \{\g 0\}$ and $a_k = - \sum_{i\neq k} a_i$
  $$\g a_{\setminus k}^\top \Sigma^{(k)}\g a_{\setminus k} =  \frac{1}{2}\sum_{i,j\neq k} a_ia_j \left(\Gamma_{ik}+\Gamma_{jk}-\Gamma_{ij}\right) = -\sum_{i\neq k} a_i a_k \Gamma_{ik} - \frac{1}{2}\sum_{i,j\neq k} a_ia_j\Gamma_{ij} = - \g a^\top \Gamma\g a,$$
  using the fact that $\Gamma$ is symmetric and  $\Gamma_{ii} = 0$ for all $i\in V$. The assertion then follows; see also the proof of Lemma 3.2.1 in \cite{ber1984}.
\end{proof}

\subsection{H\"usler--Reiss densities on decomposable graphs}
\label{app_decomp}
\begin{corollary}\label{cor_HR_dens} 
  Let $\mathcal G = (V,E)$ be a decomposable and connected graph, and suppose that 
  $\g Y$ is a H\"usler--Reiss Pareto distribution that satisfies the pairwise Markov property
  \begin{align*}
    Y_i \perpe Y_j \mid \g Y_{\setminus \{i,j\}} \quad \text{if } (i,j)\notin E.
  \end{align*}
  Then the density of $\g Y$ factorizes according to $\mathcal G$ into lower-dimensional H\"usler--Reiss densities, that is,
  \begin{align*}
    f_{\g Y}(\g y)= \frac{y_{k_{1}}^{-2} \prod_{j\neq k_{1}} y_j^{-1}}{\Lambda(\mathbf 1)} {\prod_{i=1}^m \phi_{|C_i|-1}\left\{ \log( \g y_{C_i\setminus\{k_i\}} / y_{k_i}) + \Gamma_{C_i\setminus\{k_i\}, k_i}; \Sigma^{(k_i)}_{C_i}\right\} 
    \over \prod_{i=1}^{m-1} \phi_{|D_i|-1}\left\{ \log( \g y_{D_i\setminus\{k_i\}} / y_{k_i}) + \Gamma_{D_i\setminus\{k_i\}, k_i}; \Sigma^{(k_i)}_{D_i}\right\} } ,\quad \g y\in \mathcal L,
  \end{align*}
  where the sequences of cliques $\{C_1,\dots,C_m\}$ and separator sets $\{D_2,\dots, D_m\}$
  have the running intersection property \eqref{rip}, and $k_i\in D_i$, $i=2,\dots, m$, $k_1\in C_1$.
\end{corollary}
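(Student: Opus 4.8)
The plan is to combine two results already at hand: the Hammersley--Clifford-type factorization for extremal graphical models on decomposable graphs (Theorem \ref{thm:equivFactorHomog}) and the explicit form of the marginal exponent measure densities of a H\"usler--Reiss distribution (Example \ref{HR_stable}). Since $\g Y$ is H\"usler--Reiss and satisfies the pairwise Markov property relative to the decomposable, connected graph $\mathcal G$, Theorem \ref{thm:equivFactorHomog} yields
\begin{align*}
  f_{\g Y}(\g y) = \frac{1}{\Lambda(\mathbf 1)}\,\frac{\prod_{i=1}^m \lambda_{C_i}(\g y_{C_i})}{\prod_{i=2}^m \lambda_{D_i}(\g y_{D_i})}, \qquad \g y\in\mathcal L,
\end{align*}
with $(C_1,\dots,C_m)$ and $(D_2,\dots,D_m)$ the cliques and separators satisfying the running intersection property \eqref{rip}. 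It then suffices to substitute the H\"usler--Reiss marginal formula for each factor and simplify the resulting product.

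Concretely, I would first recall from Example \ref{HR_stable} that for any $I\subset V$ and any $k\in I$ one has $\lambda_I(\g y_I) = y_k^{-2}\prod_{l\in I\setminus\{k\}} y_l^{-1}\,\phi_{|I|-1}(\g{\tilde y}_{I\setminus\{k\}};\Sigma^{(k)}_I)$, where the right-hand side does not depend on the choice of $k\in I$ (as in Example \ref{ex:HR}) and $\Sigma^{(k)}_I$ is strictly positive definite because $\Gamma_I$ remains strictly conditionally negative definite (Appendix \ref{link_vario}). The freedom of choosing the root is what makes the calculation clean: for $i=2,\dots,m$ I would apply this identity to both $\lambda_{C_i}$ and $\lambda_{D_i}$ with the \emph{same} root $k_i\in D_i\subseteq C_i$, and to $\lambda_{C_1}$ with a root $k_1\in C_1$. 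Separating the polynomial prefactors from the Gaussian factors then immediately produces the ratio of $\phi$-densities in the statement, and identifies each $\lambda_{C_i}$ (resp.\ $\lambda_{D_i}$), up to $\Lambda_{C_i}(\mathbf 1)$ (resp.\ $\Lambda_{D_i}(\mathbf 1)$), with a lower-dimensional H\"usler--Reiss density with parameter matrix $\Gamma_{C_i}$ (resp.\ $\Gamma_{D_i}$).

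The one computation that needs care is showing that the product of the polynomial prefactors collapses to $y_{k_1}^{-2}\prod_{j\neq k_1} y_j^{-1}$. Writing $y_{k_i}^{-2}\prod_{l\in C_i\setminus\{k_i\}}y_l^{-1} = y_{k_i}^{-1}\prod_{l\in C_i}y_l^{-1}$ and likewise for the separators, the prefactor equals
\begin{align*}
  \frac{\prod_{i=1}^m y_{k_i}^{-1}}{\prod_{i=2}^m y_{k_i}^{-1}}\cdot\frac{\prod_{i=1}^m\prod_{l\in C_i}y_l^{-1}}{\prod_{i=2}^m\prod_{l\in D_i}y_l^{-1}} = y_{k_1}^{-1}\cdot\prod_{j\in V} y_j^{-(c_j-s_j)},
\end{align*}
where $c_j$ and $s_j$ are the numbers of cliques and of separators (counted with multiplicity in the multiset $\mathcal D$) that contain $j$. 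The first factor reduces to $y_{k_1}^{-1}$ since each $y_{k_i}^{-1}$ with $i\ge2$ cancels between numerator and denominator. I would then invoke the standard junction-tree identity $c_j-s_j=1$ for every $j\in V$, a consequence of the running intersection property \citep[cf.,][Chapter~2]{Lauritzen}; this gives $\prod_{j\in V} y_j^{-(c_j-s_j)} = \prod_{j\in V} y_j^{-1}$ and hence the prefactor $y_{k_1}^{-1}\prod_{j\in V}y_j^{-1} = y_{k_1}^{-2}\prod_{j\neq k_1}y_j^{-1}$. The main, and fairly mild, obstacle is precisely this telescoping count over the multiset of separators together with the careful matching of roots $k_i$ between $C_i$ and $D_i$; the rest is direct substitution.
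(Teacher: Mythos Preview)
Your proof is correct and follows essentially the same approach as the paper: apply Theorem~\ref{thm:equivFactorHomog} to obtain the factorization into marginal exponent measure densities, substitute the H\"usler--Reiss marginal formula from Example~\ref{HR_stable} with matching roots $k_i\in D_i\subset C_i$, and then simplify the polynomial prefactor. The paper's prefactor argument is marginally more direct---it pairs each $\lambda_{C_i}/\lambda_{D_i}$ so that the $y_{k_i}^{-2}$ terms cancel outright, leaving $\prod_{l\in C_i\setminus D_i} y_l^{-1}$, and then observes that the sets $C_1, C_2\setminus D_2,\ldots,C_m\setminus D_m$ partition $V$---which is exactly equivalent to your junction-tree identity $c_j-s_j=1$.
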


\begin{proof}
  Theorem \ref{thm:equivFactorHomog} and Proposition \ref{prop:CI_HR} yield the factorization. It remains to show that the factors in front of the normal densities simplify to 
  $y_{k_{m-1}}^{-2} \prod_{i\neq k_{m-1}} y_i^{-1}$. Indeed, since we choose $k_i \in D_i \subset C_i$, $i=2,\dots, m$, the ratio $\lambda_{C_i}(\g y_{C_i}) / \lambda_{D_i}(\g y_{D_i})$ contributes the factor $y_j^{-1}$ for all $j\in C_i\setminus D_i$, and each such $j$ appears exactly once. For $i=1$, the contribution of $\lambda_{C_1}(\g y_{C_1})$ is $y_{k_{1}}^{-2} \prod_{i \in C_1 \setminus \{k_{1}\}} y_i^{-1}$.
\end{proof}

\subsection{Minimum spanning tree for the Danube river}
\label{appendix_tree}

The left-hand side of Figure~\ref{est_tree} shows the estimated H\"usler--Reiss minimum spanning tree
for the Danube data in Section~\ref{sec:appl} for a threshold $u$ chosen as the $90\%$-quantile of the marginal Pareto distribution. In order to assess the sensitivity of the
tree structure with respect to the threshold choice, we estimate the minimum spanning tree
for thresholds $u$ corresponding to a range of different quantiles. The similarity of
these trees in terms of the number of identical edges compared to the $90\%$-quantile tree are shown in Figure~\ref{treediff}. One can see that there is some variation of the tree structure for different thresholds, but that most of the $30$ edges are fairly stable throughout a wide range of thresholds. As a comparison, the right-hand side of Figure~\ref{est_tree} shows the Gaussian minimum spanning tree fitted to all log-transformed data, using $\log(1-\rho_{ij}^2)$ as distances in~\eqref{Tmin}, where $\rho_{ij}$ is the correlation coefficient between nodes $i,j\in V$. The Gaussian tree, a model for non-extremal data, is similar to the H\"usler--Reiss tree, a model for extreme flooding, but there are also some differences. For instance, for the extremal data the ordering of the stations 16 to 19 seems to be less important since large discharges affect all at the same time. This is confirmed by the fact that when the H\"usler--Reiss tree is extended to a block graph, then additional edges are introduced between these stations.

\begin{figure}[ht]
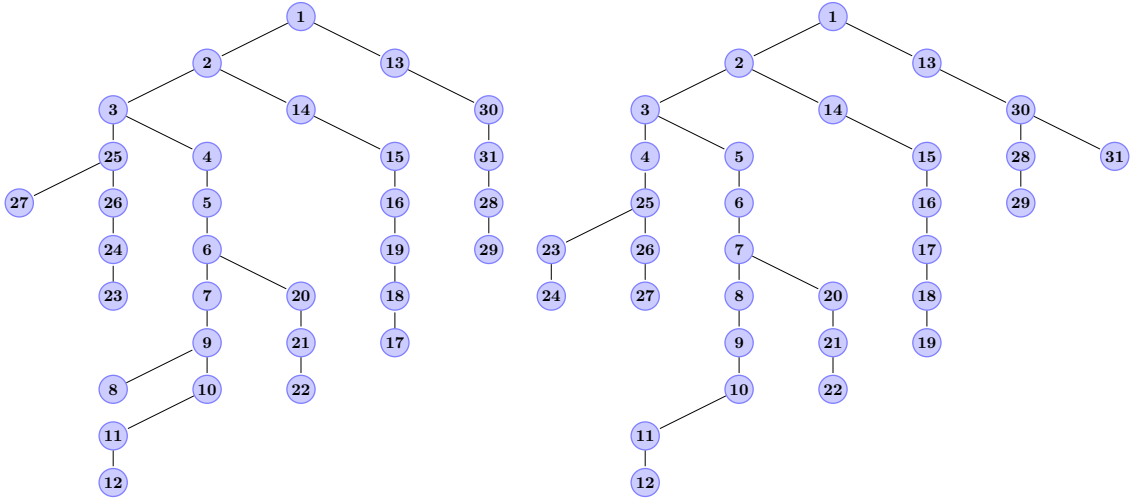

  \centering  
{\includegraphics[clip,height=7cm, page=9]{HR_graphs.pdf}}%
{\includegraphics[clip,height=7cm, page=11]{HR_graphs.pdf}}%
\caption{Estimated H\"usler--Reiss minimum spanning tree
  for the Danube data with $90\%$-quantile threshold (left)
  and Gaussian minimum spanning tree using all log-transformed data (right).}
\label{est_tree}
\end{figure}

\begin{figure}[ht]
\centering
{\includegraphics[height=7cm]{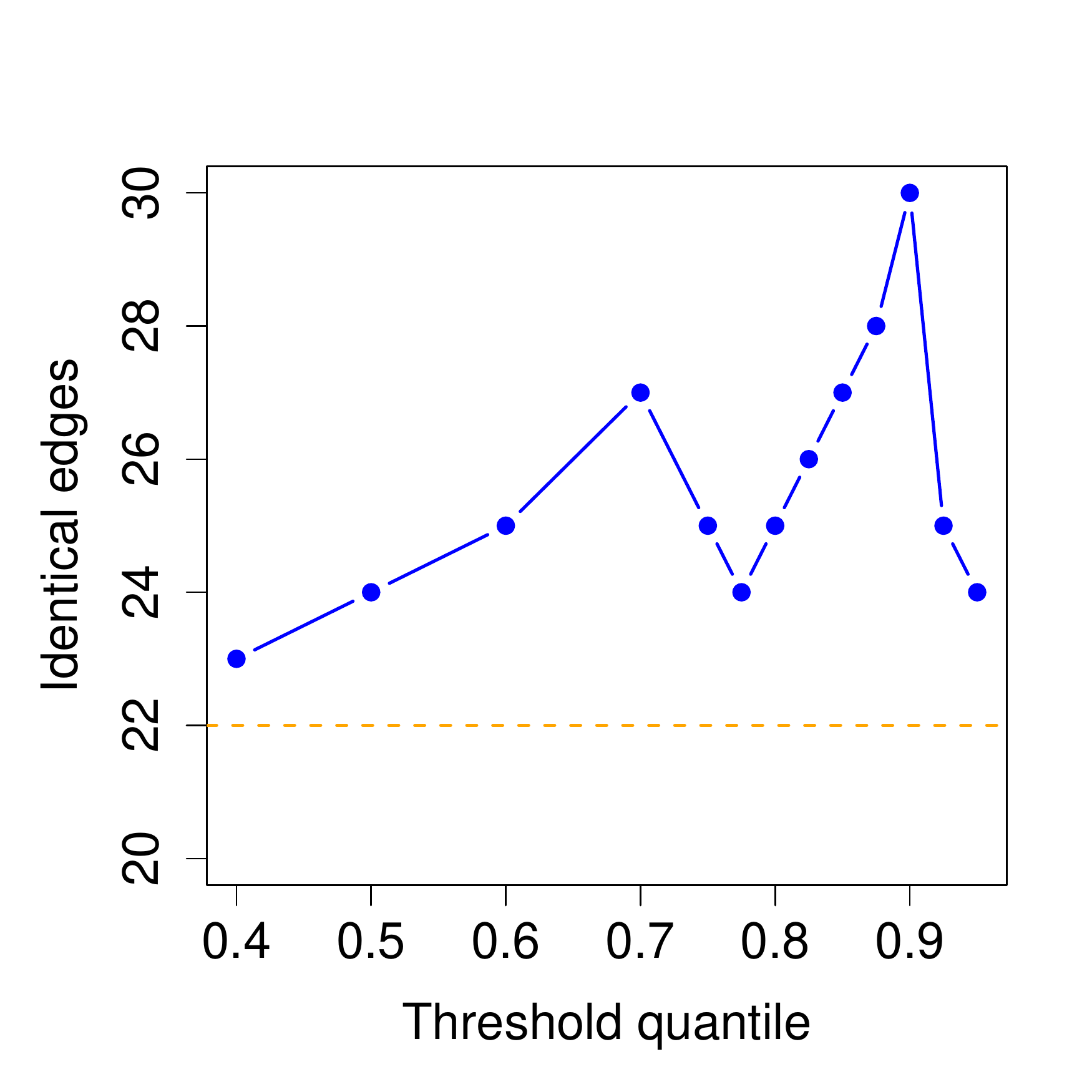}}%
\caption{For estimated minimum spanning trees corresponding to different threshold quantiles, the blue line shows the number of edges that are identical to $90\%$-quantile tree. The horizontal orange line is the number of identical edges for a Gaussian minimum spanning tree using all log-transformed data.}
\label{treediff}
\end{figure}

\subsection{Trivariate $\chi$ coefficients}
\label{appendix_trivariate}

Figure~\ref{fig_trivariate} shows the empircal estimates of the trivariate coefficients
$$\chi_{ijk} = 3 - \Lambda_{ij}(1,1) - \Lambda_{ik}(1,1) - \Lambda_{jk}(1,1) + \Lambda_{ijk}(1,1,1), \quad i,j,k\in V,$$
against those implied by the fitted spatial model in \cite{asadi2015extremes} and our graphical model minimizing the $\AIC$.

\begin{figure}[ht]
  \centering
  \hspace*{-3em}
{\includegraphics[height=6cm]{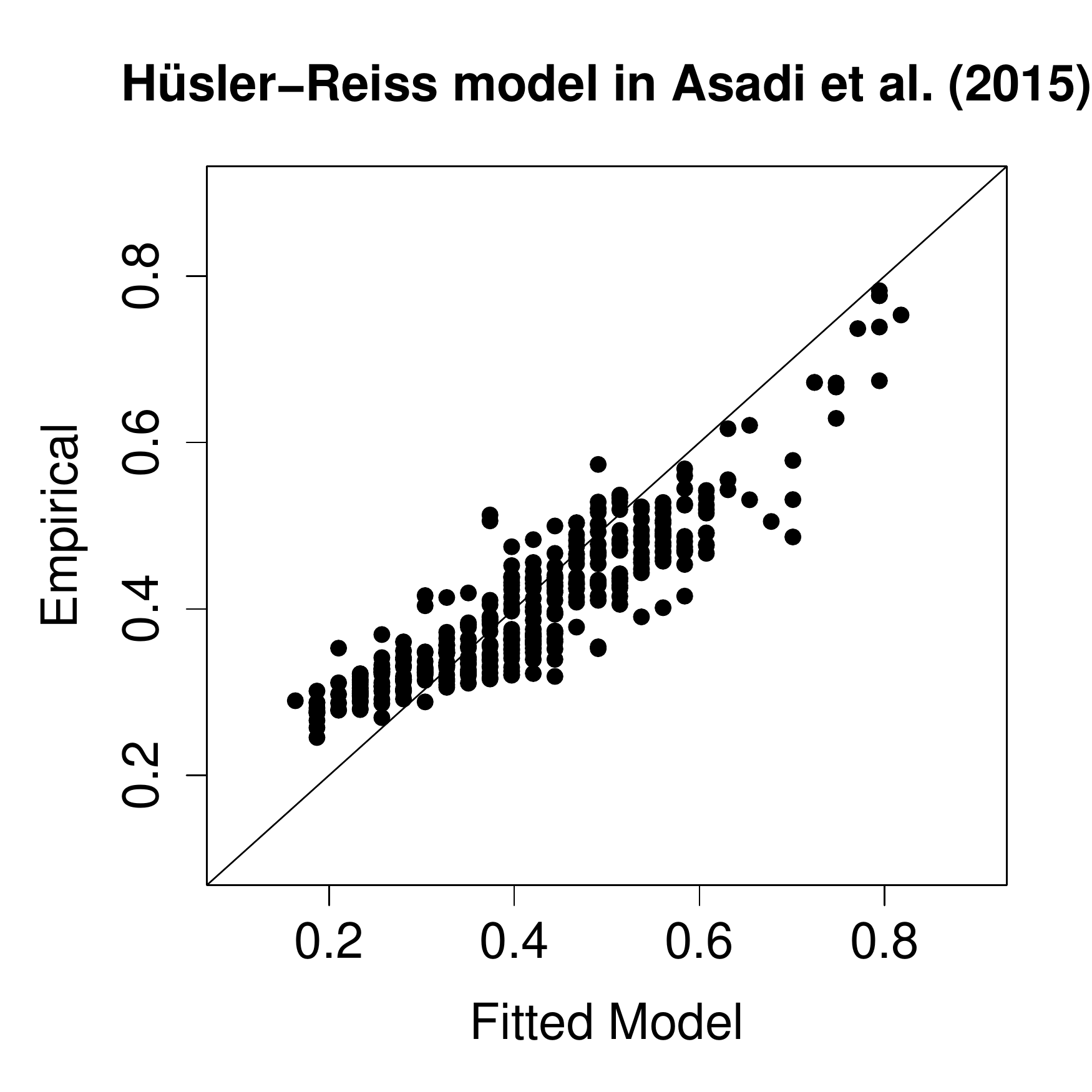}}%
{\includegraphics[height=6cm]{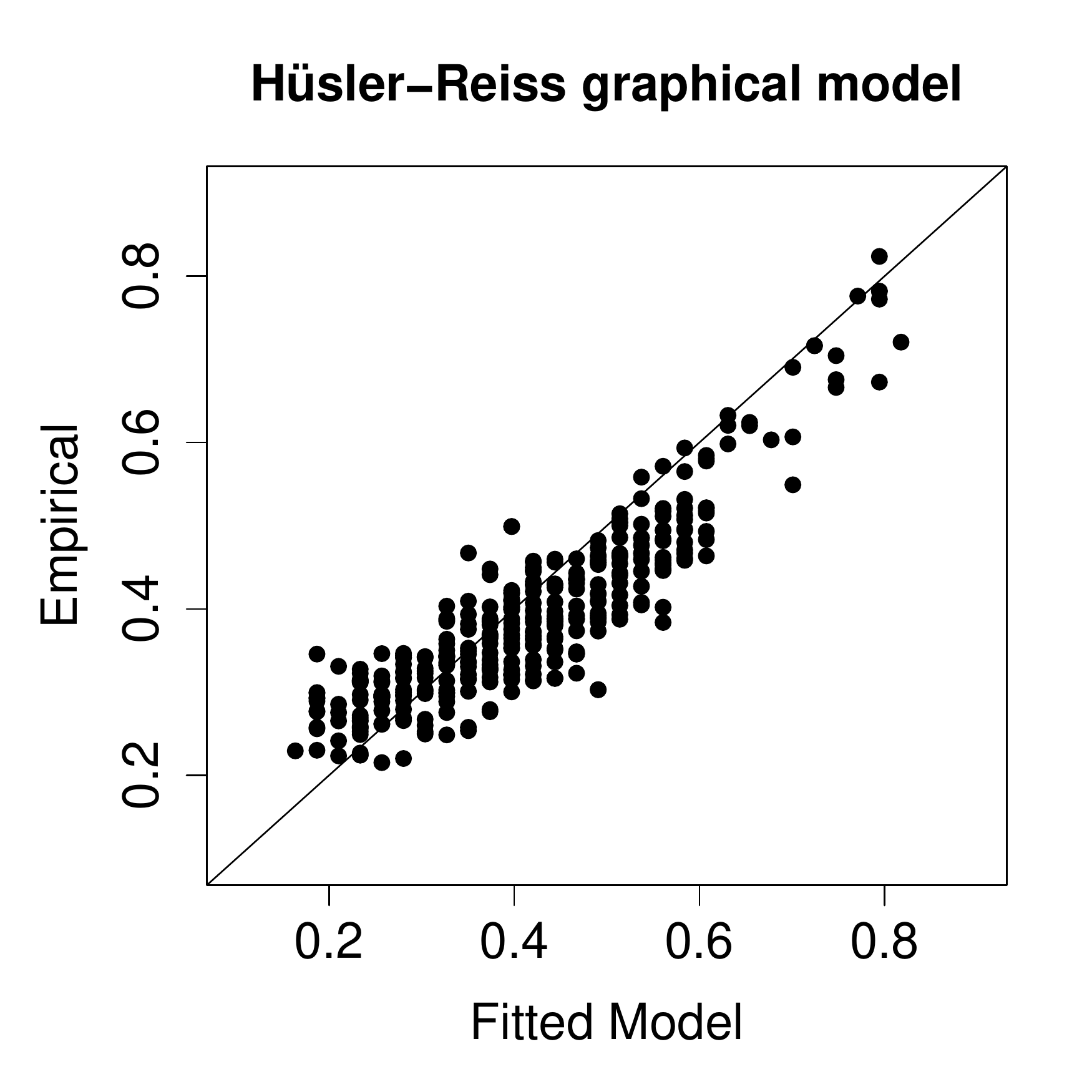}}%
\caption{Empirical estimates of the trivariate coefficients $\chi_{ijk}$, $i,j,k\in V$, against those implied by the fitted spatial model in \cite{asadi2015extremes} and our graphical model minimizing the $\AIC$; only coefficients for 400 randomly selected triplets are shown.}
\label{fig_trivariate}
\end{figure}

\subsection{Proofs}
\label{appendix_proofs}

\begin{proof}[of Proposition \ref{prop_equiv}]
  The implication $\eqref{eq:citail3} \Rightarrow (i)$ is trivial. For $(i) \Rightarrow (ii)$ let $k \in B$ and suppose that~\eqref{ci_weak} holds, that is, 
  \begin{align*}
     f^k(\g y) =  {f^k_{A\cup B}(\g y_{A\cup B}) f^k_{B\cup C}(\g y_{B\cup C}) \over  f^k_{B}(\g y_B)}, \quad \g y\in \mathcal L^k.
  \end{align*}
  For any $\g y \in \mathcal L$ choose $0 < t < \min(y_k,1)$, i.e., $\g y / t \in \mathcal L^k$, and observe
\begin{align*}
  \lambda(\g y) &=  t^{-(d+1)} f^k(\g y/ t) \\
  &=    t^{-(d+1)} {f^k_{A\cup B}(\g y_{A\cup B}/t) f^k_{B\cup C}(\g y_{B\cup C}/t) \over  f^k_{B}(\g y_B /t)} \\
  & =    t^{-(d+1)} {\lambda_{A\cup B}(\g y_{A\cup B}/t) \lambda_{B\cup C}(\g y_{B\cup C}/t) \over  \lambda_{B}(\g y_B /t)} \\
  & =    {\lambda_{A\cup B}(\g y_{A\cup B}) \lambda_{B\cup C}(\g y_{B\cup C}) \over  \lambda_{B}(\g y_B )}, 
\end{align*}
using the homogeneity of the $\lambda_I$, and the fact that $f_I^k(\g y_I / t) = \lambda_I(\g y_I/ t)$ for any $I \subset V$ with $k\in I$. Note that for this argument it is crucial that
$k$ is in an element of all three sets $B$, $A\cup B$ and $B\cup C$.

For $(ii) \Rightarrow~\eqref{eq:citail3}$ suppose that the factorization~\eqref{lambda_fac} of $\lambda$ holds, and let $k\in V$. For all $\g y \in \mathcal L^k$
\begin{align*}
  f^k(\g y) =    {\lambda_{A\cup B}(\g y_{A\cup B}) \lambda_{B\cup C}(\g y_{B\cup C}) \over  \lambda_{B}(\g y_B )} = g(\g y_{A\cup B}) h(\g y_{B\cup C}),
\end{align*}
for suitable functions $g$ and $h$, implying the required conditional independence of $f^k$ \citep[cf.,][Chapter 3]{Lauritzen}.
This shows that condition~\eqref{eq:citail3} indeed holds and thus $\g Y_A\perpe \g Y_C\mid \g Y_B$.
\end{proof}

\begin{proof}[of Theorem \ref{thm:equivFactorHomog}]
  We start by proving that if $\g Y$ satisfies the pairwise Markov property relative to $\mathcal G$, then the graph $\mathcal G$ is necessarily connected. 
  Indeed, suppose $V$ can be split into non-empty, disjoint subsets $V_1,V_2 \subset V$
  such that for $(i,j)\in E$ it holds either $i,j \in V_1$ or $i,j \in V_2$.
  For an arbitrary $k\in V$, by assumption, the pairwise Markov property relative to $\mathcal G$  is satisfied for $f^k$ on $\mathcal L^k$ and the classical Hammersley--Clifford theorem implies the global Markov property for $f^k$, and in particular  
$$f^k(\g y) =  f_{V_1}^k(\g y_{V_1}) f_{V_2}^k(\g y_{V_2}), \quad \g y \in \mathcal L^k.$$
The discussion after Proposition~\ref{prop_equiv} shows that such as factorization contradicts integrability of the
multivariate Pareto density, and therefore the graph has to be connected.

We now show that $(i) \Rightarrow (iii)$. 
The pairwise Markov property of $f^k$ relative to $\mathcal G$
implies by the classical Hammersley--Clifford theorem that 
$$f^k(\g y) =  {\prod_{C\in\mathcal C} f_C^k(\g y_C) \over \prod_{D\in\mathcal D} f_D^k(\g y_D) }, \quad \g y \in \mathcal L^k.$$
This representation is not of direct use since it cannot be extended to $f_{\g Y}$ on the whole space $\mathcal L$, since all $f_I^k$ with $k\notin I$ are not homogeneous. The result however tells us that $\g Y^k$ also satisfies the global Markov property on $\mathcal L^k$ relative to $\mathcal G$, as defined in Section \ref{graph_models}.
 The running intersection property implies that $D_m$ separates $C_m\setminus D_m$ from $(C_1\cup \dots \cup C_{m-1})\setminus D_m$. Choose $k\in D_m$, then the global Markov property for $\g Y^k$ yields
\begin{align*}
  f^k(\g y) = {f_{C_m}^k(\g y_{C_m}) f_{C_1\cup \dots \cup C_{m-1}}^k(\g y_{C_1\cup \dots \cup C_{m-1}}) \over f_{D_m}^k(\g y_{D_m})} = {\lambda_{C_m}(\g y_{C_m}) \lambda_{C_1\cup \dots \cup C_{m-1}}(\g y_{C_1\cup \dots \cup C_{m-1}}) \over \lambda_{D_m}(\g y_{D_m})} ,  \quad  \g y \in \mathcal L^k,
\end{align*}
where the second equality holds since $k\in D_m$, and $D_m$ is a subset of both $C_m$ and $C_1\cup \dots\cup C_{m-1}$.
By a homogeneity argument similar to the proof of Proposition~\ref{prop_equiv},
this factorization extends to $\lambda$ on the whole space $\mathcal L$, that is,
\begin{align*}
  \lambda(\g y) =  {\lambda_{C_m}(\g y_{C_m}) \lambda_{C_1\cup \dots \cup C_{m-1}}(\g y_{C_1\cup \dots \cup C_{m-1}}) \over \lambda_{D_m}(\g y_{D_m})} ,  \quad  \g y \in \mathcal L.
\end{align*}
It remains to decompose $ \lambda_{C_1\cup \dots \cup C_{m-1}}$ in the same manner.
To this end, choose a new $k\in D_{m-1}$ and note that 
\begin{align*}
  f_{C_1\cup \dots \cup C_{m-1}}^k(\g y_{C_1\cup \dots \cup C_{m-1}}) 
 =\int_{[0,\infty)^{|C_m\setminus D_m|}} {\prod_{C\in\mathcal C} f_C^k(\g y_C) \over \prod_{D\in\mathcal D} f_D^k(\g y_D) } \mathrm d\g y_{C_m\setminus D_m} =  {\prod_{C\neq C_m} f_C^k(\g y_C)  \over \prod_{D\neq D_m} f_D^k(\g y_D) },
\end{align*}
and therefore satisfies the global Markov property relative to the subgraph
induced on $C_1\cup \dots \cup C_{m-1}$. Since $f_{C_1\cup \dots \cup C_{m-1}}^k = \lambda_{C_1\cup \dots \cup C_{m-1}}$ on $\mathcal L^k$, applying successively the same reasoning as before yields the factorization of $\lambda$ that directly implies the representation in~\eqref{eq:producthc} for~$f_{\g Y}$.

In order to show that $(iii) \Rightarrow (ii)$,
we only need to verify that $\g Y^k$ satisfies the global
Markov property on $\mathcal L^k$ for any $k\in V$.
For disjoint sets $A,B,C\subset V$ such that $B$ separates $A$ from $C$,
the factorization~\eqref{eq:producthc} entails that
\begin{align*}
  f^k(\g y) =  \Lambda(\g 1) f_{\g Y}(\g y) =  g(\g y_{A\cup B}) h(\g y_{B\cup C}),
\end{align*}
for suitable functions $g$ and $h$, and thus $\g Y^k_A\ci \g Y^k_C\mid \g Y^k_B$.

The implication $(ii) \Rightarrow (i)$ holds trivially.
\end{proof}

\begin{proof}[of Corollary \ref{cor_construction}]
It is easy to check that $\lambda$ and $f_{\g Y}$ are homogeneous of order $-(d+1)$ on $\mathcal L$. Let $\{C_1,\dots, C_m\}$ and $\{D_2, \dots, D_m\}$ be the sequences of cliques and separators with the running intersection property \eqref{rip}. Sequential integration of the function $f_{\g Y}$ on $C_m\setminus D_m,\dots, C_2\setminus D_2,$ together with the consistency constraint yields that it defines in fact a probability density. Theorem \ref{thm:equivFactorHomog} implies that the corresponding distribution on $\mathcal L$ satisfies the Markov property relative to $\mathcal G$.
\end{proof}

\begin{proof}[of Proposition \ref{prop_tree}]
  The density of the random vector on the right-hand side of~\eqref{tree_rep} 
  is 
  $$ y_k^{-2} \prod_{e=(i,j)\in E^k} y_i^{-1}f_{U_e}(y_j/y_i) = y_k^{-2} \frac{\prod_{(i,j)\in E^k} \lambda_{ij}(y_i,y_j)}{\prod_{(i,j)\in E^k} y_i^{-2}} =  \prod_{\{i,j\}\in E} {\lambda_{ij}(y_i,y_j) \over y_i^{-2} y_j^{-2}} \prod_{i\in V} y_i^{-2},$$
  where we used~\eqref{dens_biv} for the first equation, and the fact that
  each node $i\in V\setminus\{k\}$ has exactly one incoming arrow, and the $k$th node has
  no incoming arrows. On the other hand, we recall that the density of $\g Y^k$ is $\lambda(\g y) = \Lambda(\g 1) f_{\g Y}(\g y)$, which factorizes with respect to the tree $\mathcal T$. Comparing the above density with~\eqref{tree_fact} yields the result.
\end{proof}

\begin{proof}[of Lemma \ref{cor_HR}]
  Without losing generality, we may and do assume that $k'=1$ and $k=2$.
  Let the vector $\g W^1 = (0, W_2^1,\dots, W_d^1)$ have a centred normal distribution with covariance matrix $\Sigma = \{\sigma_{ij}\}= \tilde \Sigma^{(1)}$, such that 
    $$ \Sigma^{(1)} = \Sigma_{\setminus\{1\}} =
     \begin{bmatrix}
      \sigma_{22} &  \Sigma_{2, \setminus \{1,2\}} \\
       \Sigma_{\setminus \{1,2\},2} &  \Sigma_{\setminus \{1,2\}}
    \end{bmatrix}.    
   $$
   The precision matrix is obtained by blockwise inversion as
   $$ \Theta^{(1)} = 
     \begin{bmatrix}
      \sigma_{22}^{-1} + \sigma_{22}^{-2} \Sigma_{2, \setminus \{1,2\}} S^{-1}\Sigma_{\setminus \{1,2\},2}  & -  \sigma_{22}^{-1}\Sigma_{2, \setminus \{1,2\}} S^{-1}\\
       -  \sigma_{22}^{-1} S^{-1} \Sigma_{\setminus \{1,2\},2} & S^{-1}
    \end{bmatrix},  
   $$
   where $S = \Sigma_{\setminus \{1,2\}} - \sigma_{22}^{-1} \Sigma_{\setminus \{1,2\},2}\Sigma_{2, \setminus \{1,2\}}$ is the Schur complement of upper left block $\sigma_{22}$ in the matrix $\Sigma^{(1)}$. The random vector $\g W^1$ can be transformed into
  $$\g W^2 = (-W_2^1, 0 , W_3^1 - W_2^1, \dots, W_d^1 - W_2^1),$$
  which is readily verified to have centred normal distribution with covariance matrix $\tilde \Sigma^{(2)}$. On the other hand, we may write the covariance matrix $\Sigma^{(2)}$
  of $(-W_2^1, W_3^1 - W_2^1, \dots, W_d^1 - W_2^1)$ in terms of $\Sigma$ as
  $$ \Sigma^{(2)} = 
     \begin{bmatrix}
      \sigma_{22} &  \sigma_{22} \einsfun^\top - \Sigma_{2, \setminus \{1,2\}} \\
      \sigma_{22} \einsfun - \Sigma_{\setminus \{1,2\},2} &  \Sigma_{\setminus \{1,2\}} + \sigma_{22} \einsfun \einsfun^\top - \Sigma_{\setminus \{1,2\},2} \einsfun^\top - \einsfun \Sigma_{2, \setminus \{1,2\}}
    \end{bmatrix}.    
   $$
   It can be checked that the Schur complement of the upper left block $\sigma_{22}$ in the matrix $\Sigma^{(2)}$ is again $S$. Thus, blockwise inversion yields
   $$ \Theta^{(2)} = 
     \begin{bmatrix}
        \sigma_{22}^{-1} + \sigma_{22}^{-2} \left(\sigma_{22} \einsfun^\top - \Sigma_{2,\setminus \{1,2\}}\right)  S^{-1} \left(\sigma_{22} \einsfun - \Sigma_{\setminus \{1,2\},2}\right) & -\sigma_{22}^{-1}\left(\sigma_{22} \einsfun^\top - \Sigma_{2,\setminus \{1,2\}}\right) S^{-1} \\
      -\sigma_{22}^{-1}S^{-1}\left(\sigma_{22} \einsfun - \Sigma_{\setminus \{1,2\},2}\right) &  S^{-1}
    \end{bmatrix}.
     $$
    Comparing these representations of $\Theta^{(1)}$ and $\Theta^{(2)}$ yields the assertion for $i,j \in V \setminus \{1,2\}$.
    For $i\neq 2, j = 2$, we observe
    \begin{align*}
      \sum_{l\neq 2} \Theta^{(2)}_{il} &=  - \sum_{m\neq 1,2}  S^{-1}_{im}  +  \sigma_{22}^{-1}\sum_{m\neq 1,2} S^{-1}_{im}\sigma_{m2} + \sum_{m\neq 1,2}  S^{-1}_{im} =  -\Theta^{(1)}_{i2}.
    \end{align*}
    The case $i,j = 2$ follows similarly.
\end{proof}

\begin{proof}[of Proposition \ref{prop:CI_HR}]
  Let $i,j\in V$ with $i\neq j$ be fixed and choose a $k\neq i,j$. Let $P$ and $\g W$ be as in representation~\eqref{Yk_rep}. Since $Y_k^k = P$ and due to the independence of $P$ and $\g W$ we obtain
  \begin{align*}
    Y_i^k\ci Y_j^k\mid \g Y^k_{\setminus \{i,j\}}  &\iff
    P\times e^{W_i^k - \Gamma_{ik} / 2} \ci P\times e^{W_j^k- \Gamma_{jk} / 2} \mid P, \g W^k_{\setminus \{i,j,k\}}\\
    & \iff W_i^k \ci  W_j^k \mid \g W^k_{\setminus \{i,j,k\}}\\
    &\iff \Theta^{(k)}_{ij}= 0,
  \end{align*}
  where the variable $W_k^k$ can be deleted from the conditioning since it is deterministic given $P$, and therefore the reduced precision matrix $\Theta^{(k)}$ of the vector $\g W_{\setminus k}^k$ appears. The last equivalence follows from the well-known fact that 
  conditional independence in multivariate normal models
  corresponds to zeros in the precision matrix (cf., Example \ref{gauss}).

  Let now $k = i \neq j$ and choose a $k'\notin\{i,j\}$. Lemma \ref{cor_HR} implies
  that
  \begin{align}\label{eq4}
    -\sum_{l\neq k} \Theta^{(k)}_{jl} =  \Theta^{(k')}_{jk}.
  \end{align}
  Since $k'\in V\setminus \{i,j\}$, by Proposition \ref{prop_equiv}, $Y_i \perpe Y_j \mid \g Y_{\setminus \{i,j\}}$ is equivalent to $ Y_k^{k'}\ci Y_j^{k'}\mid \g Y^{k'}_{\setminus \{k,j\}}$. The latter, by the first part of the proof, is then equivalent to $\Theta^{(k')}_{jk}= 0,$ which, together with~\eqref{eq4}, yields the assertion.
  The case $k = j \neq i$ is analogous by symmetry.
\end{proof}

\begin{proof}[of Proposition \ref{mat_compl}]
Let $C_1,\dots,C_m$ be an enumeration of the cliques of the decomposable connected graph $\mathcal G=(V,E)$.
Recall that by assumption, all intersections between pairs of cliques are either empty or contain a single node.
We show how to obtain the unique, $d\times d$-dimensional variogram matrix $\Gamma$ that solves the completion problem \eqref{mat_cons} by adding one clique after the other. We first set
  \begin{align}\label{ind_start}
    \Gamma_{ij} = \Gamma^{(C_1)}_{ij} , \text{ for } i,j\in C_1.
    \end{align}
Let $I_{p-1} = C_1 \cup \dots \cup C_{p-1}$ be the union of the first $p-1$ cliques, $2\leq p \leq m$ cliques that have been chosen in an order such that $\mathcal G$ restricted to $I_{p-1}$ forms a connected graph. Suppose that we have already constructed a unique $|I_{p-1}|\times|I_{p-1}|$-dimensional variogram matrix $\Gamma^{(I_{p-1})}$ that satisfies
  \begin{align}\label{Ip}
    & \begin{cases}
      \Gamma_{ij}^{(I_{p-1})} = \Gamma^{(C_l)}_{ij} , & \text{ for } i,j\in C_l \text{ and all } l=1,\dots, p-1,\\
      \Theta^{(I_{p-1}, k)}_{ij} = 0, & \text{ for all }i,j,k\in I_{p-1}, i,j\neq k \text{ and } (i,j)\notin E,
    \end{cases}
  \end{align}
  where here and in the sequel we use the notation $\Theta^{(J,k)}$ as the inverse of $\Sigma^{(J,k)}= \varphi_{k}(\Gamma^{(J)})$ for a variogram matrix $\Gamma^{(J)}$ on some index set $J \subset V$ and $k\in J$. We next choose a clique, say $C_{p}$, that intersects $I_{p-1}$, and this intersection has to be a single node, say $k_0\in V$.
  Let $I_p = I_{p-1} \cup C_p$ and define the matrix 
\begin{align}\label{theta_block}
  \Theta^{(I_p,k_0)} = 
  \begin{bmatrix}
    \Theta^{(I_{p-1},k_0)} & 0 \\
    0 & \Theta^{(C_p,k_0)}
  \end{bmatrix}.  
  \end{align}
  This matrix is an invertible covariance matrix since its blocks are invertible covariance matrices, and its inverse $\Sigma^{(I_{p},k_0)}$ has the same property with blocks $\Sigma^{(I_{p-1},k_0)}$ and $\Sigma^{(C_p,k_0)}$. This yields an $|I_p| \times |I_p|$-dimensional variogram matrix $\Gamma^{(I_{p})}$ through the mapping $\varphi_{k_0}^{-1}$, which has the form
  \begin{align*}
    \Gamma_{ij}^{(I_{p})} =
    \begin{cases}
      \Gamma^{(I_{p-1})}_{ij} , & \text{ for } i,j\in I_{p-1},\\
      \Gamma^{(C_p)}_{ij} , & \text{ for } i,j\in C_p,\\
      \Gamma_{ik_0}^{(I_{p-1})} + \Gamma_{jk_0}^{(I_{p-1})} &\text{ for } i\in I_{p-1}, j\in C_p \text{  or } j\in I_{p-1}, i\in C_p.
    \end{cases}      
  \end{align*}
  This variogram matrix clearly solves the problem~\eqref{Ip} with $I_{p-1}$ replaced by $I_p$.
  It is unique by construction and the fact that $\varphi_{k_0}$ and $\varphi_{k_0}^{-1}$ are bijections.

  Starting with~\eqref{ind_start} and then adding all cliques for $p=2,\dots, m$ according to the above procedure, we obtain a unique $d\times d$-dimensional variogram $\Gamma = \Gamma^{(I_m)}$ matrix that satisfies all constraints in~\eqref{mat_cons}.
  Comparing with Corollary \ref{cor_HR_dens} it follows that the corresponding density in
 ~\eqref{graph_dens} is $d$-variate H\"usler--Reiss with parameter matrix $\Gamma$.
\end{proof}

\begin{proof}[of Lemma \ref{lem_extr_func}]
  The general formula for extremal functions in Proposition~1 in \cite{dom2016} can be written in terms of the exponent measure density $\lambda$ as 
  \begin{align*}
    \mathbb P(\g U^k \in A) &= \int_{\mathcal E} \einsfun\{\g y / y_k\in A \} \einsfun\{ y_k > 1\} \lambda(\g y) \mathrm d \g y\\
           &= \int_{\mathcal L^k} \einsfun\{\g y / y_k\in A\} f^k(\g y) \mathrm d \g y\\
    &= \mathbb P( \g Y^k/Y^k_k \in A).
  \end{align*}
  Since the density of $\g U^k_{\setminus k} = \g Y^k_{\setminus k}/Y^k_k$ is readily seen to be $\lambda(\g y)$
  for $\g y_{\setminus k} \in [0,\infty)^{d-1}$ and $y_k = 1$, it follows with
  $$ \lambda(\g y) = y_k^{-(d+1)}\lambda(\g y / y_k) = y_k^{-(d+1)} f_{\g U^k_{\setminus k}}(\g y_{\setminus k} / y_k), \quad \g y \in \mathcal E,$$
  that~\eqref{def_ef} is an equivalent definition of extremal functions.

  It follows from Theorem 2 in \cite{dom2016} that for a uniform distribution $T$ 
  on $\{1,\dots, d\}$, the 
  random vector $\g Y^T / \| \g Y^T\|_1$ follows the distribution of the spectral
  measure $H$ on $S_{d-1} = \{\g x \in \mathcal E: \|x \|_1 =1\}$ associated with the max-stable distribution $\g Z$, that is,
  $$\Lambda(A) = d \int_{S_{d-1}} \int_0^\infty u^{-2}  \einsfun\{u \g w \in A \} \mathrm d u H(\mathrm d \g w),\quad A \subset \mathcal E.$$  
  If $A \subset \mathcal L$, then $u\g w \in A$ implies $u \geq 1$, and therefore
  \begin{align*}
    \mathbb P\left(\frac{P\g Y^T}{\| \g Y^T\|_1} \in A \right) &=  \int_{S_{d-1}} \int_1^\infty f_P(u)  \einsfun\{u \g w \in A \} \mathrm d u H(\mathrm d \g w)\\
    &= \frac1d \int_A \lambda(\g y) \mathrm d \g y,
  \end{align*}
  since $f_P(u) = 1/u^{2}, u\geq 1$. For $A = \mathcal L = \mathcal E \setminus [\g 0,\g 1]$ this yields for the conditioning event in~\eqref{Y_rep} 
  \begin{align}\label{rej_prop}
    \mathbb P\left( \frac{ P \|\g Y^T\|_\infty}{\| \g Y^T\|_1} > 1  \right) = \frac{\Lambda(\mathcal L)}{d} = \frac{\Lambda( \g 1)}{d}.
    \end{align}
  Since $\g Y$ has density $\lambda(\g y) / \Lambda(\g 1)$, this concludes the proof.
\end{proof}

\small
  
\bibliographystyle{Chicago}
\bibliography{references}
\end{document}